\documentclass[11pt]{article}

 \usepackage{amsmath,amssymb,amscd,amsthm,esint}
 
\usepackage{graphics,amsmath,amssymb,amsthm,mathrsfs}

\usepackage{graphics,amsmath,amssymb,amsthm,mathrsfs,amsfonts}

\usepackage{sidecap}
\usepackage{float}
\usepackage{extarrows}
\usepackage{booktabs}
\usepackage{verbatim}
\usepackage{hyperref}
\usepackage[usenames,dvipsnames]{xcolor}

\setlength{\paperwidth}{8.5in} \setlength{\paperheight}{11.0in}
\setlength{\textwidth}{6.5in} \setlength{\textheight}{9.0in}
\setlength{\oddsidemargin}{0in} \setlength{\evensidemargin}{0in}
\setlength{\topmargin}{0in} \setlength{\headsep}{0.0in}
\setlength{\headheight}{0.0in} \setlength{\marginparwidth}{0in}
\setlength{\marginparsep}{0in}

\newtheorem{thm}{Theorem}[section]
\newtheorem{lemma}[thm]{Lemma}

\theoremstyle{definition}
\newtheorem{remark}[thm]{Remark}

\def\XXint#1#2#3{{\setbox0=\hbox{$#1{#2#3}{\int}$}
         \vcenter{\hbox{$#2#3$}}\kern-.5\wd0}}

\def\R{\mathbb{R}}

\def\e{\varepsilon}

\def\tQ{\widetilde{Q}}

\def\Z{\mathbb{Z}}
\def\e{\varepsilon}

\numberwithin{equation}{section}

\begin{document}

\title{ Dirichlet Problems  in Perforated Domains}

\author{
Robert Righi\thanks{Supported in part by NSF grants DMS-1856235 and  DMS-2153585.} 
\qquad
Zhongwei Shen\thanks{Supported in part by NSF grants DMS-1856235 and DMS-2153585.}  }
\date{}
\maketitle

\begin{abstract}

 In this paper we establish $W^{1,p}$ estimates for solutions $u_\e$ to Laplace's equation 
 with the Dirichlet condition in a bounded and perforated, not necessarily periodically,   $C^1$  domain $\Omega_{\e, \eta}$
  in $\mathbb{R}^d$. The bounding constants depend explicitly on two small parameters $\e$ and $\eta$, where $\e$ 
  represents the scale of the minimal distance between holes, and $\eta$ denotes the ratio between the size of the holes and $\e$. 
  The proof relies on a large-scale $L^p$ estimate for $\nabla u_\e$, whose proof is divided into two parts.
  In the first part, we show that 
  as  $\e, \eta $ approach zero,  harmonic functions in $\Omega_{\e, \eta}$ may be approximated by solutions of
    an intermediate problem for a Schr\"odinger operator in $\Omega$.
     In the second part,  a real-variable method is employed to establish the large-scale $L^p$ estimate for $\nabla u_\e$ by using
     the approximation at scales above $\e$. The results are sharp except in the case $d\ge 3$ and $p=d$ or $d^\prime$.
     
\medskip

\noindent{\it Keywords}: Uniform Estimates; Dirichlet Problem; Perforated Domain; Homogenization.  

\medskip

\noindent {\it MR (2020) Subject Classification}: 35Q35;  35B27; 76D07.

\end{abstract}


\section{Introduction}\label{section-1}

In this paper we continue the study of the Dirichlet problem for Laplace's equation, 
\begin{equation}\label{DP-0}
\left\{
\aligned
-\Delta u_\e & = F + \text{\rm div}(f) & \quad & \text{ in } \Omega_{\e, \eta},\\
u_\e & = 0 & \quad & \text{ on } \partial \Omega_{\e, \eta},
\endaligned
\right.
\end{equation}
where $\Omega_{\e, \eta} $ is a domain perforated, not necessarily periodically, with a large number of tiny holes.
Given $F \in L^p(\Omega_{\e, \eta}) $ and $ f\in L^p(\Omega_{\e, \eta}; \R^d)$,
it is well known that the Dirichlet problem \eqref{DP-0} possesses a unique solution 
$u_\e=u_{\e, \eta}$ in $W_0^{1, p}(\Omega_{\e, \eta} )$,  if $1< p< \infty$ and $\Omega_{\e, \eta} $ is a bounded $C^1$ domain in $\R^d$, 
$d\ge 2$.
Let $A_p(\Omega_{\e, \eta} )$ and $B_p(\Omega_{\e, \eta} )$ denote the smallest constants for which the $W^{1, p}$ estimate,
\begin{equation}\label{W1p-0}
\| \nabla u_\e \|_{L^p(\Omega_{\e, \eta} )}
\le A_p (\Omega_{\e, \eta} )  \| f \|_{L^p(\Omega_{\e, \eta} )} + B_p(\Omega_{\e, \eta} )\|F\|_{L^p(\Omega_{\e, \eta} )},
\end{equation}
holds. We are interested in  bounds of $A_p(\Omega_{\e, \eta})$ and $B_p(\Omega_{\e, \eta})$ that 
exhibit explicit and sharp dependence  on the sizes of the holes as well as on the distances between them.

Let $Q(x, r)$ denote the open cube centered at $x$ with side length $r$.
To describe the perforated domain $\Omega_{\e, \eta}$, 
let  $\{Y_z^s: z\in \Z^d\}$  be a family of  bounded domains with connected and uniform $C^1$ boundaries such that  
\begin{equation}\label{c-0}
B(0, c_0)\subset Y_z^s \subset B(0, 1/8)
\end{equation}
for some $c_0>0$.
Let $\{ x_z: z\in \Z^d\}$ be a sequence of points in $ Q(0, 1/2)$ and
\begin{equation}\label{T}
T_{z, \eta} =z+ x_z + \eta \overline{ Y_z^s},
\end{equation}
where $\eta\in (0, 1/2)$. 
For a domain $\Omega$ in $\R^d$ and $0< \e\le 1$, define
\begin{equation}\label{O-e}
\Omega_{\e, \eta}  =\Omega\setminus \bigcup_z \e {T_{z, \eta} },
\end{equation}
where the union is taken over those $z$'s in $\Z^d$ for which  $\e (z+Y) \subset \Omega$, where $Y=Q(0, 1)$.
Thus, the perforated domain $\Omega_{\e, \eta}$ is obtained from $\Omega$ by removing a hole
$\e { T_{z, \eta} }$, centered at $\e(z+x_z)$ and of size $\e \eta$, from each cell  $\e (z+Y)$ of size $\e$
and contained in $\Omega$.
Roughly speaking, the parameter $\e$ represents the scale of the minimal distances between holes, while 
the parameter $\eta$ represents the scale of the ratios between the sizes of the holes and $\e$.
We point out that the holes need not to be  identical, nor they are placed periodically, 
unless the sequences  $\{x_z\}$, and $\{ Y_z^s\}$ are independent of $z$.

To state the main results for $A_p(\Omega_{\e, \eta})$, 
we note that $A_2(\Omega_{\e, \eta})=1$, and that $A_p (\Omega_{\e, \eta})=A_{p^\prime}(\Omega_{\e, \eta} )$, where
$p^\prime=\frac{p}{p-1}$, by duality.
As a result, it suffices to consider the case $2< p< \infty$.
Let
\begin{equation}\label{sigma}
\sigma_\e=\sigma_{\e, \eta}
=\left\{
\aligned
& \e \eta^{1-\frac{d}{2}} & \quad & \text{ if } d\ge 3,\\
& \e |\ln \eta|^{\frac12} & \quad & \text{ if } d=2.
\endaligned
\right.
\end{equation}
The asymptotic behaviors of $A_p(\Omega_{\e, \eta} )$ and $B_p(\Omega_{\e, \eta})$, as $\e, \eta \to 0$,
depend on $\sigma_\e$.
 Our first theorem treats the case of relatively large holes, where $\sigma_\e \le 1$, while our second theorem handles the case of relatively small holes, 
 where $\sigma_\e \geq 1$.

\begin{thm}\label{main-thm-1}
Suppose that $0< \sigma_\e \le 1$ and $2< p< \infty$.
Let $\Omega$ be a bounded $C^1$ domain in $\R^d$ and $\Omega_{\e, \eta}$ be given by \eqref{O-e}.
Then
\begin{equation}\label{W1p-01}
A_p (\Omega_{\e, \eta} ) \le 
\left\{
\aligned
&C \eta^{-d|\frac12-\frac{1}{p}|}  & \quad & \text{ if }  d\ge 3,\\
& C \eta^{-2|\frac{1}{2}-\frac{1}{p}|} |\ln \eta|^{-\frac12}
& \quad &\text{ if }  d=2,
\endaligned
\right.
\end{equation}
where $C$ depends only on $d$, $p$, $\Omega$, and $\{Y_z^s\}$.
\end{thm}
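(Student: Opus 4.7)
The plan is to reduce to $F\equiv 0$ and to $2<p<\infty$ (using $A_2(\Omega_{\e,\eta})=1$ together with the duality $A_p=A_{p'}$), and then to combine a sub-$\e$ estimate from standard $W^{1,p}$ theory on a single perforated cell with a large-scale $L^p$ estimate at scales above $\e$. The large-scale estimate is the heart of the argument and, following the two-part strategy announced in the abstract, relies on the approximation of harmonic functions in $\Omega_{\e,\eta}$ by solutions of an intermediate Schr\"odinger problem in $\Omega$, coupled with a real-variable ($L^p$) principle of Shen type.

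For the large-scale piece, I would fix a cube $Q=Q(x_0,r)\subset\Omega$ with $r\ge\e$ and consider a $u_\e$ that is harmonic in $4Q\cap\Omega_{\e,\eta}$ with $u_\e=0$ on $\partial\Omega_{\e,\eta}\cap 4Q$. The approximation theorem is expected to furnish a function $v$ solving $-\Delta v+\mu_\e v=0$ in $4Q\cap\Omega$, with $\mu_\e\asymp\sigma_\e^{-2}$, and satisfying an $L^2(2Q)$ approximation bound of order $(\e/r)^\alpha$ times an $L^2$-norm of $u_\e$. Because $\sigma_\e\le 1$, one has $\mu_\e\ge c>0$, and standard elliptic regularity for the Schr\"odinger operator $-\Delta+\mu_\e$ on the $C^1$ domain $\Omega$ provides uniform Lipschitz (or $C^{0,\alpha}$) estimates for $v$ with constants independent of $\e$ and $\eta$. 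Transferring these estimates to $u_\e$ through the approximation, together with Caccioppoli's inequality, yields the large-scale reverse H\"older inequality
\[
\left(\dashint_Q|\nabla u_\e|^q\right)^{1/q}\le C\left(\dashint_{4Q}|\nabla u_\e|^2\right)^{1/2}
\]
for any fixed $q$ with $2<q<\infty$ and all $r\ge\e$. Shen's real-variable $L^p$ method then converts this into a large-scale $L^p$ estimate for $\nabla u_\e$, modulo a controllable remainder arising from averages at sub-$\e$ scales.

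For the sub-$\e$ piece, I would rescale $x\mapsto\e x$ to reduce to the Dirichlet problem on a perforated unit cell $Y\setminus\eta\overline{Y_z^s}$. The $W^{1,p}$ constant on such a cell can be controlled by interpolating between the trivial $L^2$ bound and an $L^\infty$-type bound derived from Hardy's inequality; the latter involves the reciprocal capacity of a hole of size $\eta$, which is of order $\eta^{2-d}$ for $d\ge 3$ and of order $|\ln\eta|$ for $d=2$. The resulting factors $\eta^{-d|1/2-1/p|}$ and $\eta^{-2|1/2-1/p|}|\ln\eta|^{-1/2}$ appearing in \eqref{W1p-01} are precisely what this interpolation produces, and combining with the large-scale estimate completes the proof.

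The hardest step will be the approximation theorem itself. In the non-periodic setting with variable hole shapes $\{Y_z^s\}$, one must construct a corrector locally around each $\e T_{z,\eta}$ that carries the capacitary contribution driving the Schr\"odinger potential $\mu_\e$, and then control the residual uniformly in $\e$, $\eta$, and $z$. The borderline regime $\sigma_\e\asymp 1$ is particularly delicate, since the potential $\mu_\e$ is of order one and the Schr\"odinger estimates cannot absorb the error as freely as when $\sigma_\e$ is very small. Once this approximation and the resulting reverse H\"older inequality are in hand, the rest of the argument proceeds along the familiar lines of quantitative homogenization via Shen's $L^p$ principle.
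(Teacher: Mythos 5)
Your high-level outline---reduce to $F=0$ and $p>2$, split into a sub-$\e$ cell estimate and a large-scale estimate, and drive the large-scale piece by an intermediate Schr\"odinger problem fed into Shen's real-variable $L^p$ machine---does track the paper's overall architecture, but several of your specifics are wrong or mislocated, and one of them is a genuine conceptual gap.

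The most serious issue is the form of the approximation error. You posit that the harmonic $u_\e$ is approximated by a Schr\"odinger solution $v$ with an $L^2$ error of order $(\e/r)^\alpha$. That is not what happens here and it would not let Shen's theorem fire: Theorem~\ref{RV-L} / Theorem~\ref{RV2-3} requires a \emph{fixed} smallness parameter $\delta < \delta_0$, not one that shrinks with the scale $r$. In the paper the approximant is $\chi_{\e,\eta}\,w$ (not $w$ alone!), where $w$ solves the intermediate problem \eqref{L-1-4} and $\chi_{\e,\eta}$ is the corrector of Sections~\ref{section-C1}--\ref{section-C2}; the error in Theorems~\ref{cor2-2} and \ref{versionD2-2} is $\psi(\eta)=\eta^{1/2}$ (for $d\ge 3$) or $|\ln\eta|^{-1/2}$ (for $d=2$), \emph{uniformly} in the scale $r\ge\e$. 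The smallness that closes the real-variable argument therefore comes entirely from $\eta$ being small (see Lemma~\ref{lemma-L2}), and the regime $\eta\ge\eta_0$ is disposed of separately by reduction to $\eta=1$. Your ``$(\e/r)^\alpha$'' picture is the one that appears in large-scale regularity for random coefficients; it is not the right mechanism in this perforated setting, and you cannot get it because harmonicity of $u_\e$ in $\Omega_{\e,\eta}$ provides no excess decay across scales. Relatedly, the corrector $\chi_{\e,\eta}$ is not an afterthought: $v=\chi_{\e,\eta}w$ is needed both to match the zero Dirichlet data on the holes and to produce the desired bound $\left(\fint_{Q(x,2\e)}|\nabla v|^2\right)^{1/2}\le C\max(\sigma_\e^{-1}|w|+|\nabla w|)$ in \eqref{L-1-2}.

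Two further, smaller points. First, your derivation of the sub-$\e$ factor by ``interpolating between the trivial $L^2$ bound and an $L^\infty$-type bound from Hardy's inequality'' on a single cell does not match the paper's bookkeeping: the cell estimate (Lemma~\ref{Main4-4}) contributes a factor $\Phi_p(\eta^{-1})$ that is $1$ for $2<p<d$, and the capacitary $\eta$-power $\eta^{1-d/2}$ enters through the $L^p$ bound on $u_\e$ itself (Theorem~\ref{lemma-P1}); the product of the two gives the sharp exponent only for $p>d$, and the case $2<p\le d$ is recovered by Riesz--Thorin interpolation of the global $W^{1,p}$ estimate against $p=2$. Second, for $d\ge 3$ the paper proves Theorem~\ref{main-thm-1} \emph{without} invoking the large-scale $S_\e$ estimate at all (Remark~\ref{re-P1}); only the $d=2$ case of Theorem~\ref{main-thm-1} and the $2<p\le d$ cases of Theorem~\ref{main-thm-2} actually use Theorem~\ref{thm-S0}. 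So your route is not wrong in spirit, but it over-engineers the $d\ge 3$ case and, more importantly, has the approximation error parametrized in a way that would derail the real-variable step.
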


\begin{thm}\label{main-thm-2}
Suppose  that $\sigma_\e \ge 1$ and $2< p< \infty$.
Let $\Omega$ be a bounded $C^1$ domain in $\R^d$ and $\Omega_{\e, \eta}$ be given by \eqref{O-e}.
 Then 
\begin{equation}\label{W1p-02}
A_p(\Omega_{\e, \eta}) \le 
\left\{
\aligned
& C (1+ \e^{-1} \eta^{\frac{d}{p}-1}) & \quad & \text{ if }  d\ge 3 \text{ and } 2<p<d,\\
& C (\e^{-1} + |\ln \eta|^{1-\frac{1}{d}}) & \quad & \text{ if }   d\ge 3 \text{ and } p=d,\\
& C \e^{-1} \eta^{\frac{d}{p}-1} & \quad & \text{ if }   d\ge 3 \text{ and } d< p<\infty,\\
& C \e^{-1} \eta^{\frac{2}{p}-1} |\ln \eta|^{-1}
& \quad &\text{ if }  d=2   \text{ and } 2<p< \infty,
\endaligned
\right.
\end{equation}
where $C$ depends only on $d$, $p$, $\Omega$, and $\{Y_z^s\}$.
\end{thm}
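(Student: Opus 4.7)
The strategy I would follow is to combine a large-scale $L^p$ estimate for $\nabla u_\e$ (valid on cubes of side $\geq c\e$) with a small-scale correction capturing the singular behavior of $\nabla u_\e$ near each hole. In the small-hole regime $\sigma_\e \ge 1$, the effective potential of the intermediate Schr\"odinger problem mentioned in the abstract is of order $\sigma_\e^{-2}\le 1$, so the large-scale step should yield a constant independent of $\e,\eta$; the growth appearing in \eqref{W1p-02} then comes entirely from the small-scale capacitary correction.

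For the large-scale step I would invoke the approximation of $u_\e$ by a solution $w$ of a Schr\"odinger problem $(-\Delta+V_\e)w = F+\text{\rm div}(f)$ in $\Omega$, with $\|V_\e\|_\infty \lesssim \sigma_\e^{-2}$, established in the first part of the paper. Classical $W^{1,p}$ theory for second-order elliptic operators with bounded potentials in bounded $C^1$ domains provides Lipschitz-type estimates for $w$ whose constants are uniform in $\|V_\e\|_\infty$; feeding these into the real-variable framework of the second part yields
\[
\left(\dashint_{Q(x_0,r)\cap\Omega_{\e,\eta}} |\nabla u_\e|^p\right)^{1/p}
\le C\left(\dashint_{Q(x_0,2r)\cap\Omega_{\e,\eta}} |\nabla u_\e|^2\right)^{1/2}
+ C\left(\dashint_{Q(x_0,2r)} |f|^p\right)^{1/p}
\]
for every $r\ge c\e$. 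Combined with the trivial $L^2$ bound $A_2 = 1$, this controls the part of $\nabla u_\e$ living on scales above $\e$ with a uniform constant.

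At scales below $\e$ I would localize to a single cell $\e(z+Y)\setminus \e T_{z,\eta}$ and approximate $u_\e$ there by $\bar u_z(1-\phi_{z,\e})$, where $\bar u_z$ is the average of $u_\e$ over a collar of the cell and $\phi_{z,\e}$ is the capacitary potential of $\e T_{z,\eta}$ in $\e(z+Y)$. A direct computation with the Newtonian potential gives the asymptotics $\|\nabla\phi_{z,\e}\|^p_{L^p(\e(z+Y))} \asymp (\e\eta)^{d-p}$ for $p\ne d$; summing over the $\asymp \e^{-d}$ cells contained in $\Omega$ produces the factor $\e^{-1}\eta^{d/p-1}$, and together with the large-scale estimate yields the claimed bound in the cases $2<p<d$ and $d<p<\infty$, as well as the two-dimensional case after substituting the two-dimensional capacitary potential.

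The principal obstacle is the critical exponent $p=d$, where the capacitary integral is logarithmically divergent at the scale of a hole. There the capacitary correction must be truncated at the cell scale $\e$, and a Lorentz or Marcinkiewicz interpolation between exponents slightly above and below $d$ produces the $|\ln\eta|^{1-1/d}$ factor; this is precisely the step that the authors flag as non-sharp. A secondary technical concern is to check that the approximation error controlling the distance between $u_\e$ and $w$ in the large-scale step decays fast enough as $\sigma_\e\to\infty$ that no spurious $\eta$- or $\e$-dependence is injected into the large-scale constant, which is what ultimately reduces the analysis of the first three terms of \eqref{W1p-02} to the classical $L^p$ theory for Schr\"odinger operators on $\Omega$.
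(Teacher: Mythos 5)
Your decomposition into a large-scale Schr\"odinger approximation and a cell-scale capacitary correction is exactly the architecture of the paper's proof, and your computation $\|\nabla\phi_{z,\e}\|_{L^p(\e(z+Y))}^p\asymp(\e\eta)^{d-p}$ for $p>d'$ recovers Lemma \ref{LPGRAD-2}. But there are two genuine gaps. First, the display you give for the large-scale step,
\begin{equation*}
\left(\dashint_{Q(x_0,r)\cap\Omega_{\e,\eta}}|\nabla u_\e|^p\right)^{1/p}
\le C\left(\dashint_{Q(x_0,2r)\cap\Omega_{\e,\eta}}|\nabla u_\e|^2\right)^{1/2}
+C\left(\dashint_{Q(x_0,2r)}|f|^p\right)^{1/p},\qquad r\ge c\e,
\end{equation*}
cannot hold with $C$ independent of $\eta$: already at $r\approx\e$, a hole contributes $\sim\e^{-1}\eta^{d/p-1}|\bar u_z|$ to the left side but only $\sim\e^{-1}\eta^{(d-2)/2}|\bar u_z|$ to the right, and $\eta^{d/p-1}/\eta^{(d-2)/2}\to\infty$ as $\eta\to 0$ whenever $p>2$; feeding such an estimate through the real-variable framework would give $A_p\le C$, contradicting the theorem itself. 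The reverse H\"older property with an $\eta$-uniform constant holds at scales above $\e$ only for the pre-averaged quantity $S_\e(F,f)(x)=\bigl(\fint_{Q(x,2\e)}|\nabla u_\e|^2\bigr)^{1/2}$ (Theorem \ref{thm-S0}), not for $|\nabla u_\e|$; the $\e$-averaging must come first, precisely so that the hole singularities are smoothed out before the $p$-th power is taken. One also needs $\|u_\e\|_{L^p}\le C\min(\sigma_\e,1)\|f\|_{L^p}$ (Theorem \ref{lemma-P1}) to convert the cell amplitudes $\bar u_z$ into data, which your sketch does not invoke.

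Second, at $p=d$ (only relevant for $d\ge 3$) your mechanism misattributes the logarithm. The capacitary gradient integral $\int_{\e\eta}^\e r^{(d-1)(1-p)}\,dr$ is logarithmically divergent at $p=d'=d/(d-1)$, not at $p=d$; at $p=d$ it converges and $\bigl(\fint_{\e Q_z}|\nabla\phi_{z,\e}|^d\bigr)^{1/d}\asymp\e^{-1}$ with no $\ln\eta$. Interpolation also cannot supply the logarithm, since $\Phi_p(\eta^{-1})\to 1$ from both sides as $p\to d$. The factor $|\ln\eta|^{1-1/d}$ in \eqref{W1p-02} comes instead from the critical $W^{1,d}$ Poincar\'e inequality in a punctured cell (Lemma \ref{lemma-P0}, case $p=d$, with constant $\e^d|\ln\eta|^{d-1}$, $d$ being the conformal exponent), which enters Lemma \ref{Main4-4} as the prefactor $\Phi_d(\eta^{-1})$ multiplying the large-scale $L^2$ average. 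To obtain the stated bound at $p=d$ you would need to carry this critical Poincar\'e estimate through the cell problem directly rather than extract it from the corrector or from interpolation.
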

We point out that the upper bounds for $A_p(\Omega_{\e, \eta} )$ in \eqref{W1p-01} are sharp.
 The upper bounds of $A_p(\Omega_{\e, \eta} )$ in \eqref{W1p-02}
are also sharp for $d=2$ as well as for $d\ge 3$ and $p\neq d$.
Whether the upper bound is  sharp for the remaining case, where $d\ge 3$ and $p=d$ (or $d^\prime$),  is not known.
Indeed, if $\Omega_{\e, \eta}$ is a periodically perforated domain, given by \eqref{O-e}  with the sequences
 $\{ x_z\}$ and $\{Y_z^s\}$ independent of $z$, 
we will show that 
\begin{equation}\label{W1p-01s}
A_p(\Omega_{\e, \eta} )   \ge 
\left\{
\aligned
&c \, \eta^{-d|\frac12-\frac{1}{p}|}  & \quad & \text{ if }   d\ge 3,\\
& c\,  \eta^{-2|\frac{1}{2}-\frac{1}{p}|} |\ln \eta|^{-\frac12}
& \quad &\text{ if }   d=2,
\endaligned
\right.
\end{equation}
in the case $\sigma_\e  \le 1$, 
and that 
\begin{equation}\label{W1p-02s}
A_p(\Omega_{\e, \eta} ) \geq 
\left\{
\aligned
& c (1+ \e^{-1} \eta^{\frac{d}{p}-1}) & \quad & \text{ if }   d\ge 3 \text{ and } 2<p<d,\\
& c \e^{-1} \eta^{\frac{d}{p}-1} & \quad & \text{ if }  d\ge 3 \text{ and } d\le  p<\infty,\\
& c \e^{-1} \eta^{\frac{2}{p}-1} |\ln \eta|^{-1}
& \quad &\text{ if } d=2 \text{ and } 2< p < \infty,
\endaligned
\right.
\end{equation}
in the case $\sigma_\e \ge  1$.
The constants $c>0$ in \eqref{W1p-01s}-\eqref{W1p-02s}  are independent of $\e$ and $\eta$.
See Theorem \ref{thm-A-low-1}. 
Note that in the case $d\ge 3$ and  $p=d$,
our upper and lower bounds differ by a term $C |\ln \eta|^{1-\frac{1}{d}}$.

The main results for $B_p(\Omega_{\e, \eta})$ are given in Theorems \ref{addin-thm}, \ref{main-thm-3} and \ref{main-thm-4}.
The question of their sharpness is addressed in Section \ref{section-Sh}.

Motivated by the homogenization theory of boundary value problems for elliptic equations in perforated domains \cite{lions1980asymptotic, cioranescu2018strange, allaire1991homogenization,allaire1991homogenization2, jing2020unified, MR4374607},
the study of the uniform $W^{1, p}$ estimate \eqref{W1p-0} in $\Omega_{\e, \eta}$
 was initiated by N. Masmoudi  \cite{MR2111721} in the case $\eta=1$.
 The general case $0< \eta<1$ was first studied  by 
  the second author of the present paper in \cite{shen2023uniform}. 
 Both the bounded domain $\Omega_{\e, \eta} $ and the unbounded perforated domain, 
 \begin{equation}\label{omega}
 \omega_{\e, \eta} =\mathbb{R}^d \setminus \bigcup_{z\in \mathbb{Z}^d} \e ( z+ \eta \overline{Y_z^s}),
 \end{equation}
  were considered in  \cite{shen2023uniform}, where  $\Omega_{\e, \eta}$ and $\omega_{\e, \eta}$ need not to be periodically perforated.
  The upper bounds  obtained  in \cite{shen2023uniform} for $A_p$ are not sharp, although  they are only off by an arbitrary small power of $\eta$.
  In \cite{MR4630318} J. Wallace and  the second author  studied  the case of the unbounded domain $\omega_{\e, \eta}$.
  Under the additional assumption that  the holes $\{ Y^s_z \}$  are identical and thus $\omega_{\e, \eta}$ is periodic,
  they were able to obtain the estimate \eqref{W1p-0} with sharp constants $A_p$ and $B_p$.
  The argument  in \cite{MR4630318} uses a large-scale Lipschitz estimate for harmonic functions $u_\e$ in perforated domains,
  whose proof relies on  the observation that the difference  $u_\e(x+\e e_j) - u_\e(x)$ is also harmonic.
  It is not clear how to extend this proof to the non-periodic case  and to the case of  bounded domains.
  
  In this paper we introduce a new approach to the study of $W^{1, p}$ estimates in perforated domains, which allows us to handle the case of non-periodic holes as well as
  boundary estimates for bounded perforated domains.   
To show Theorems \ref{main-thm-1} and  \ref{main-thm-2},
 we start with a localization argument as in \cite{MR4630318} and reduce the $L^p$ estimates of $\nabla u_\e $ to an $L^p$ estimate of the operator $S_\e$ defined by
\begin{equation}\label{T-op}
S_\e (F, f)=\left(\fint_{Q(x, 2\e) } |\nabla u_\e|^2 \right)^{1/2}, 
\end{equation}
where $u_\e$ is the solution of \eqref{DP-0},  extended to $\R^d$ by zero. It  is not hard to see that
$$
\| S_\e(F,f)\|_{L^2(\R^d)} = \|\nabla u_\e \|_{L^2(\Omega_{\e, \eta} )}.
$$
Thus, by energy estimates,
\begin{equation}\label{est-E}
\| S_\e ( F, f) \|_{L^2(\R^d)}
\le \| f \|_{L^2(\Omega_{\e, \eta} )} + C \min (\sigma_\e, 1) \| F \|_{L^2(\Omega_{\e, \eta})}.
\end{equation}

\begin{thm}\label{thm-S0}
Let $\Omega$ be a bounded $C^1$ domain in $\R^d$ and $\Omega_{\e, \eta}$ be given by \eqref{O-e}.
Then
\begin{equation}\label{S-0}
\| S_\e (F, f)\|_{L^p(\R^d)}
\le C \left\{  \| f \|_{L^p(\Omega_{\e, \eta})}
+ \min (\sigma_\e, 1) \| F \|_{L^p(\Omega_{\e, \eta}) } \right\}
\end{equation}
 for $2<p< \infty$, 
 where $C$ depends only on $d$, $p$, $\{Y_z^s\}$ and $\Omega$.
\end{thm}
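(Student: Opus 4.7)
The plan is to deduce the $L^p$ estimate \eqref{S-0} from the $L^2$ estimate \eqref{est-E} by an $L^p$ real-variable extrapolation, with the approximation of $u_\e$ by an intermediate Schr\"odinger solution in $\Omega$ (the first part of the paper's strategy) serving as the crucial local input at scales above $\e$.

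First I would set up the real-variable criterion. The operator $T(F,f) := S_\e(F,f)$ is bounded on $L^2(\R^d)$ by \eqref{est-E}. To promote this to $L^p$ for $2 < p < \infty$, I would verify a two-piece local decomposition in the spirit of Shen's $L^p$ method: for every cube $Q \subset \R^d$ of side $r \ge C_0 \e$, on $\tfrac12 Q$ one has $S_\e(F,f) \le g_Q + h_Q$ with
\begin{equation*}
\Big(\fint_{\tfrac12 Q}|g_Q|^q\Big)^{1/q} \le C \Big(\fint_{2Q}|S_\e(F,f)|^2\Big)^{1/2} + C\,\mathcal{M}_p(Q)
\end{equation*}
for some fixed $q > p$, and
\begin{equation*}
\Big(\fint_{\tfrac12 Q}|h_Q|^2\Big)^{1/2} \le \delta \Big(\fint_{2Q}|S_\e(F,f)|^2\Big)^{1/2} + C\,\mathcal{M}_p(Q),
\end{equation*}
where $\delta$ can be made arbitrarily small and $\mathcal{M}_p(Q) := \big(\fint_{2Q}(|f|^p + \min(\sigma_\e,1)^p|F|^p)\big)^{1/p}$. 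For cubes with $r < C_0\e$ the estimate reduces to the $L^2$ bound directly, since the cube-average at scale $2\e$ in the definition of $S_\e$ varies only mildly across $\tfrac12 Q$.

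Second, I would produce the decomposition at scales $r \ge C_0 \e$ via the Schr\"odinger approximation. Let $v$ solve the intermediate Dirichlet problem $-\Delta v + V_\eta v = F + \mathrm{div}(f)$ in $\Omega$ with the appropriate boundary data, where $V_\eta \sim \sigma_\e^{-2}$ is the effective (capacity-density) potential produced by the holes, and set $w = u_\e - v$ on $\Omega_{\e,\eta}$. Since $|\nabla u_\e|^2 \le 2|\nabla v|^2 + 2|\nabla w|^2$, averaging at scale $2\e$ yields the pointwise decomposition with $g_Q(x) := \sqrt{2}\,(\fint_{Q(x,2\e)}|\nabla v|^2)^{1/2}$ and $h_Q(x) := \sqrt{2}\,(\fint_{Q(x,2\e)}|\nabla w|^2)^{1/2}$. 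Meyers-type reverse H\"older regularity for the Schr\"odinger operator on the $C^1$ domain $\Omega$ supplies an $L^q$ bound on $\nabla v$ for some $q > p$, which controls $g_Q$, while the approximation theorem of Part 1 supplies the $L^2$ smallness of $\nabla w$ that controls $h_Q$. The factor $\min(\sigma_\e,1)$ multiplying $\|F\|_{L^p}$ is inherited from the Hardy-type $L^2$ bound $\|\nabla v\|_{L^2} \le C \sigma_\e \|F\|_{L^2}$ for the Schr\"odinger problem with potential $V_\eta \sim \sigma_\e^{-2}$.

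The main obstacle will be the boundary treatment. When $Q$ is close to $\partial \Omega$, some cells in \eqref{O-e} are partially truncated, and the approximation by a solution of the Schr\"odinger equation in $\Omega$ must hold uniformly up to the boundary; this is where the $C^1$ hypothesis on $\Omega$ is essential and where the approximation theorem of Part 1 must already be formulated in a boundary-adapted form. Given that global approximation, the real-variable theorem runs routinely and delivers \eqref{S-0} with constants uniform in $\e$ and $\eta$.
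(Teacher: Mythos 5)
Your overall strategy---promote the $L^2$ bound \eqref{est-E} to $L^p$ by a real-variable argument, using the intermediate Schr\"odinger problem in $\Omega$ to supply the large-scale approximation---is the same as the paper's. There is, however, a specific gap in the approximation step that the proposal as written cannot close.

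You set $w = u_\e - v$ where $v$ solves the Schr\"odinger problem in the \emph{unperforated} domain $\Omega$, and you appeal to ``the approximation theorem of Part 1'' for the $L^2$ smallness of $\nabla w$. But $v$ does not vanish on the holes $\e T_{z,\eta}$, whereas $u_\e$ does. Near each hole $u_\e\equiv 0$ while $v = O(1)$, so there $\nabla w = -\nabla v$; the $L^2$ norm of $\nabla w$ is comparable to that of $\nabla v$, not small. The quantity for which the paper's convergence rate is established (Theorems \ref{cor2-2} and \ref{versionD2-2}) is $r_\e = u_\e - \chi_{\e,\eta}\, u_{0,\e}$, where $\chi_{\e,\eta}$ is the corrector of Sections \ref{section-C1}--\ref{section-C2}: a function that vanishes in the holes, equals $1$ away from them, and makes $\chi_{\e,\eta}v$ satisfy the Dirichlet condition on $\partial\Omega_{\e,\eta}\setminus\partial\Omega$. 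Omitting the corrector is not a cosmetic lapse; without it the error term you need for $h_Q$ is simply not small.

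Replacing $v$ by $\chi_{\e,\eta}v$ is also not an automatic fix to the rest of the outline. The product rule introduces the term $v\,\nabla\chi_{\e,\eta}$, whose $L^2$ average at scale $2\e$ is of order $\sigma_\e^{-1}\,\|v\|_{L^\infty}$ (Lemmas \ref{LPGRAD-2} and \ref{LPGRADD2-2}), so controlling $g_Q$ requires pointwise interior bounds on $v$ and $\nabla v$ (Remark \ref{re-S1}, Lemma \ref{lemma-S1}), not merely the $W^{1,q}$ regularity you invoke. The paper avoids having to estimate an inhomogeneous Schr\"odinger problem with $\mathrm{div}(f)$ data altogether: it splits $S_\e(F,f)\le S_\e(F,0)+S_\e(0,f)$, applies Theorem \ref{RV1-3}, which reduces each piece to the reverse H\"older inequality \eqref{L-2} for $u_\e$ \emph{harmonic with zero data} on $Q(x_0,4R)$, and then proves \eqref{L-2} for $R\ge C\e$ by a second, nested real-variable iteration (Theorems \ref{RV-L}, \ref{RV2-3}) in which the \emph{homogeneous} Schr\"odinger problem is solved locally with boundary data $u_\e|_{\partial Q(0,t)}$ chosen at a good slice $t$ via \eqref{L-1-3}. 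This two-level structure, together with the normalization $T_\e(F)=(\min(\sigma_\e,1))^{-1}S_\e(F,0)$, is what produces the $\min(\sigma_\e,1)$ factor cleanly; a single-shot global decomposition would have to carry this factor through every estimate.
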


By a localization argument as well as $L^p$ estimates for $u_\e$ in \cite{shen2023uniform},
Theorems \ref{main-thm-1} and \ref{main-thm-2} follow  from  Theorem \ref{thm-S0}.
Since $S_\e$ is defined by averaging $\nabla u_\e$ over a cell of size $2\e$, 
the estimate \eqref{S-0} should be regarded as a large-scale  $W^{1, p}$ estimate for $u_\e$.
Theorem \ref{thm-S0} shows that  such  average of $\nabla u_\e$  behaves much better than $\nabla u_\e$ in $L^p$ spaces for $p\neq 2$.
Indeed, as $\e, \eta \to 0$, $A_p(\Omega_{\e, \eta} ) \to \infty$ for $p\neq 2$ and $\sigma_\e\le 1$,
while the operator norm $\| S_\e (0,  \cdot ) \|_{L^p \to L^p}$ remains bounded.

To prove Theorem \ref{thm-S0}, we use  a real-variable method  from \cite{shenbounds}, which reduces the problem to a reverse 
H\"older inequality  for harmonic functions $u_\e$  in perforated domains.
The proof of the reverse H\"older inequality again relies on the same real-variable argument. 
As such, for each subdomain  $D$ of size greater than $\e$, one needs to approximate $\nabla u_\e$ by a 
function that behaves well in the $L^p$ norm.
To this end, for each hole in the domain, we introduce an associated nonnegative potential,
supported in a neighborhood of the hole. 
Let $V_\e$ denote the sum of these potentials.
A key observation of this paper  is that a harmonic function $u_\e$ in a perforated domain $D_{\e, \eta}$ is well approximated by 
 $\chi_{\e, \eta} v_\e$, where $\chi_{\e, \eta}$ is a corrector for $D_{\e, \eta}$ with $\chi_{\e, \eta}=1$ on $\partial D$, 
 and $v_\e$ is the solution of  an intermediate equation,  
\begin{equation}\label{IP-0}
(-\Delta + \sigma_\e^{-2} V_{\e}) v_{\e} =0
\end{equation} 
 in the non-perforated domain $D$ with boundary data $v_\e =u_\e$ on $ \partial D$.
We note that even though the holes may not be periodically placed, the potential $V_{\e}$ behaves like a constant.
As a result,  $\nabla v_{\e}$ satisfies the reverse H\"older inequality for any $p>2$.

The rest of the paper is organized as follows.
 In Section \ref{section-R} we  give the proof of Theorems \ref{main-thm-1} and \ref{main-thm-2}, using Theorem \ref{thm-S0}.
 The estimates of bounding constants $B_p(\Omega_{\e, \eta})$ in \eqref{W1p-0} are given in Section \ref{section-B}.
 Sections \ref{section-C1} - \ref{section-L} are devoted to the proof of Theorem \ref{thm-S0}.
 Specifically,  the construction and estimates of the correctors $\chi_{\e, \eta}$ are given in Sections \ref{section-C1} and \ref{section-C2}.
 The estimates for solutions of the intermediate problem \eqref{IP-0} are given in Section \ref{section-I}.
 A   convergence rate for $ \nabla (u_\e - \chi_{\e, \eta} v_\e ) $ in $L^2$  is established in Section \ref{section-Con}, and 
 the  proof of Theorem \ref{thm-S0} is given in Section \ref{section-L}.
 Finally, we address the question of sharpness of Theorems \ref{main-thm-1} and \ref{main-thm-2} in Section \ref{section-Sh}.


\section{Reduction to the large-scale estimates}\label{section-R}

Recall  $Q(x, r)$ denotes the open cube centered at $x$ with size length $r$.
Let $\{Y_z^s: z\in \mathbb{Z}^d\}$ be a family of bounded domains with connected and uniform $C^1$ boundaries such that 
$B(0, c_0) \subset Y_z^s\subset B(0, 1/8)$.
Let $\{ x_z: z\in \mathbb{Z}^d\}$ be a sequence of points in $Q(0, 1/2)$.
For $z\in \mathbb{Z}^d$, we define  $Q_z =z+ Q(0, 1)$ and $T_{z, \eta} =z+x_z + \eta \overline{Y_z^s}$, where
$\eta \in (0, 1/2)$.
Note that $B(z+x_z, c_0 \eta) \subset T_{z, \eta} \subset Q_z$ and dist$(T_{z, \eta}, \partial Q_z) \ge c>0$.

 The following lemma  will be useful to us.
 Its proof for $p =2$ is well known \cite{allaire1991homogenization}. 
 The proof for $p\neq 2$  is similar and can be found in \cite[Lemma 2.1]{shen2023uniform}. 

\begin{lemma}\label{lemma-P0}
 Let $d \geq 2$ and $1 \le  p < \infty$. Suppose that $u \in W^{1,p}(Q(0, \e) )$ and $u =0$ on $B(x_0,\e \eta)$ for some $x_0\in  Q(0, \e /2)$ and $0 <\eta < 1/4$. Then 
 \begin{equation}\label{PI}
     \int_{Q(0, \e) } |u|^p\,  dx \leq C \int_{Q(0, \e)}  |\nabla u|^p \, dx \cdot \begin{cases}
         \begin{alignedat}{2}
             &\e^p \eta^{p-d} \qquad &&\text{if} \ 1 \le  p < d, \\ 
             &\e^p |\ln \eta|^{d-1 } \qquad &&\text{if} \ p =d,  \\ 
             &\e^p  \qquad &&\text{if} \ d < p < \infty,
         \end{alignedat}
     \end{cases}
 \end{equation}
 where  $C$ depends on $d$ and $p$.
\end{lemma}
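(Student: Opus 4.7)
The plan is to carry out the classical radial representation argument for weighted Poincar\'e inequalities on a cube containing a small hole, following the $p=2$ proof from \cite{allaire1991homogenization} but tracking the H\"older exponent. First I would rescale via $v(y)=u(\e y)$ to reduce to the case $\e=1$, with $v\in W^{1,p}(Q(0,1))$ vanishing on $B(x_0,\eta)$ for some $x_0\in Q(0,1/2)$ and $\eta\in(0,1/4)$; the factor $\e^p$ in \eqref{PI} is then produced by the usual scaling of $\int|\nabla v|^p$.

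Next I would bound $v$ pointwise along radial segments. For $x\in Q(0,1)$ set $r=|x-x_0|$ and $\omega=(x-x_0)/r$; since $\overline{Q(0,1)}$ is convex and $v(x_0+\eta\omega)=0$, the fundamental theorem of calculus (valid for Sobolev functions via the ACL characterization) gives
$$|v(x)|\le\int_\eta^r|\nabla v(x_0+s\omega)|\,ds.$$
For $p>1$, I would insert the weights $s^{(d-1)/p}\cdot s^{-(d-1)/p}$ and apply H\"older's inequality with exponents $p$ and $p'=p/(p-1)$, obtaining
$$|v(x)|^p\le\left(\int_\eta^r s^{-\frac{d-1}{p-1}}\,ds\right)^{p-1}\int_\eta^r|\nabla v(x_0+s\omega)|^p s^{d-1}\,ds.$$
The first factor on the right is $O(\eta^{(p-d)/(p-1)})$ when $p<d$, $O(|\ln\eta|)$ when $p=d$, and $O(1)$ when $p>d$; taken to the power $p-1$ these are exactly $\eta^{p-d}$, $|\ln\eta|^{d-1}$, and $1$, matching the three cases in \eqref{PI}.

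Finally I would integrate in polar coordinates centered at $x_0$. Writing $\rho(\omega)\le\sqrt{d}$ for the distance from $x_0$ to $\partial Q(0,1)$ along $\omega$ and using $v\equiv 0$ on $B(x_0,\eta)$,
$$\int_{Q(0,1)}|v|^p\,dx=\int_{S^{d-1}}\int_\eta^{\rho(\omega)}r^{d-1}|v(x_0+r\omega)|^p\,dr\,d\omega.$$
Inserting the pointwise bound and swapping the order of integration via Fubini converts the $r$-integration into $\int_s^{\rho(\omega)}r^{d-1}\,dr\le C(d)$, which leaves exactly $C\cdot G(p,d,\eta)\int_{Q(0,1)}|\nabla v|^p\,dx$ on the right, where $G$ is the appropriate prefactor. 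The case $p=1$ bypasses H\"older entirely: one writes $|\nabla v|=|\nabla v|s^{d-1}\cdot s^{1-d}$ and uses the crude bound $s^{1-d}\le\eta^{1-d}$ along the radial segment, then applies the same polar-coordinates step.

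The argument is essentially bookkeeping once one has the radial representation, and I do not expect a genuine obstacle beyond carefully identifying the three regime thresholds where the weight $s^{-(d-1)/(p-1)}$ transitions through marginal integrability at $s=0$. The one technical point deserving care is justifying the FTOC step for $v\in W^{1,p}$, which follows from ACL together with the positive-measure vanishing set $B(x_0,\eta)$, ensuring that almost every radial ray from $x_0$ crosses a zero of $v$ within $[0,\eta]$.
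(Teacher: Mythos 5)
Your proof is correct. The paper does not supply its own argument for this lemma, instead citing \cite{allaire1991homogenization} for $p=2$ and \cite[Lemma~2.1]{shen2023uniform} for general $p$, but the radial-representation argument you give---polar coordinates centered at $x_0$, H\"older with the weight $s^{\pm(d-1)/p}$ along rays to produce $\big(\int_\eta^r s^{-(d-1)/(p-1)}\,ds\big)^{p-1}$, then Fubini to absorb the outer $r^{d-1}\,dr$ factor---is the standard proof of exactly this estimate and is almost certainly what the cited reference uses, so this is essentially the same approach.
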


\begin{lemma}\label{Lemma-P}
Let $\Omega$ be a bounded Lipschitz domain in $\mathbb{R}^d$ 
 and  $\Omega_{\e, \eta}$ be  given by \eqref{O-e}. Then
\begin{equation}\label{P00}
\| u \|_{L^2 (\Omega_{\e, \eta})}
\le C \min (\sigma_\e, 1) \|\nabla  u \|_{L^2(\Omega_{\e, \eta})}
\end{equation}
for any $u \in H_0^1(\Omega_{\e, \eta})$,
where $\sigma_\e$ is given by \eqref{sigma} and $C$ is independent of $\e$ and $\eta$.
\end{lemma}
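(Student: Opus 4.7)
The plan is to extend $u$ by zero outside $\Omega_{\e,\eta}$ to obtain an $H_0^1(\Omega)$ function (legitimate since $u$ vanishes on the boundaries of all the removed holes as well as on $\partial\Omega$) and then split the argument according to whether $\sigma_\e \ge 1$ or $\sigma_\e \le 1$. In the regime $\sigma_\e \ge 1$ we have $\min(\sigma_\e,1)=1$, and the claim reduces to the classical Poincar\'e inequality on the bounded Lipschitz domain $\Omega$, whose constant depends only on $\Omega$.

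In the regime $\sigma_\e \le 1$, the strategy is to estimate $\|u\|_{L^2}$ cell by cell. I would partition the support of $u$ into the collection of ``interior'' cubes $\e Q_z = \e(z+Y)$ with $\e Q_z\subset \Omega$, together with a boundary layer $\Omega^\ast \subset \{ x\in \Omega :\operatorname{dist}(x,\partial\Omega)\le C\e\}$ consisting of the remaining region near $\partial\Omega$ not covered by these interior cells. On each interior cell, the translated/rescaled function $v(y)=u(\e y - \e z)$ lies in $W^{1,2}(Q(0,1))$ and vanishes on $B(x_z, c_0\eta)\subset Y_z^s$, so Lemma \ref{lemma-P0} applied with $p=2$ yields
\begin{equation*}
\int_{\e Q_z} |u|^2\, dx \le C\, \e^2 \theta_\eta \int_{\e Q_z} |\nabla u|^2\, dx,
\end{equation*}
where $\theta_\eta=\eta^{2-d}$ for $d\ge 3$ (the $p<d$ case) and $\theta_\eta=|\ln\eta|$ for $d=2$ (the $p=d$ case). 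In both cases $\e^2\theta_\eta = \sigma_\e^2$ exactly, so summing over all interior cells gives the desired estimate with constant $C\sigma_\e^2$ on this portion.

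For the boundary layer $\Omega^\ast$, the holes are no longer available, but $u=0$ on $\partial\Omega$ is. Using the Lipschitz character of $\Omega$, one flattens the boundary locally and applies the one-dimensional Poincar\'e inequality along segments normal to $\partial\Omega$ of length $O(\e)$ to obtain
\begin{equation*}
\int_{\Omega^\ast} |u|^2\, dx \le C\e^2 \int_{\Omega^\ast} |\nabla u|^2\, dx.
\end{equation*}
Since $\eta\in(0,1/2)$ forces $\eta^{1-d/2}\ge 1$ when $d\ge 3$ and $|\ln\eta|^{1/2}\ge c>0$ when $d=2$, we have $\e\le C\sigma_\e$, so this boundary-layer term also fits inside $C\sigma_\e^2\|\nabla u\|_{L^2(\Omega_{\e,\eta})}^2$. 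Adding the two contributions yields \eqref{P00}. The only delicate point is the boundary-layer estimate, but this is a standard consequence of $u|_{\partial\Omega}=0$ and the Lipschitz regularity of $\partial\Omega$; the genuinely ``perforated'' input is confined to the interior-cell bound, where the exponents in Lemma \ref{lemma-P0} dovetail precisely with the definition of $\sigma_\e$.
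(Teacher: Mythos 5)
Your proof is correct and takes essentially the same route as the paper: apply Lemma~\ref{lemma-P0} cell-by-cell to obtain the $\sigma_\e$ bound, then combine with the standard Poincar\'e inequality on $\Omega$ to produce the factor $\min(\sigma_\e,1)$. The paper is terser, stating the intermediate inequality \eqref{P-01} directly for $u$ vanishing on the hole boundaries; your explicit handling of the boundary-layer cells (which contain no hole) via $u|_{\partial\Omega}=0$ together with the observation $\e\le C\sigma_\e$ fills in exactly the detail the paper leaves to the reader.
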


\begin{proof}
 It follows from  Lemma \ref{lemma-P0} that if $u \in H^1(\Omega_{\e, \eta})$ and $u= 0 $ on $  \partial\Omega_{\e, \eta} \setminus \partial\Omega$, then 
\begin{equation} \label{P-01}
 \| u \|_{L^2(\Omega_{\e, \eta})} \le C \sigma _\e \|\nabla u \|_{L^2(\Omega_{\e, \eta})}.
 \end{equation}
 This,  together with the Poincar\'e inequality $\| u \|_{L^2(\Omega)} \le C \|\nabla u \|_{L^2(\Omega)} $ for
 $u \in H_0^1(\Omega)$, yields
 \eqref{P00}
for $u \in H_0^1(\Omega_{\e, \eta})$.
\end{proof}

For $F\in L^2(\Omega)$ and $f\in L^2(\Omega; \R^d)$, let $u_\e\in H_0^1(\Omega_{\e, \eta})$ be the  solution of the Dirichlet problem 
\eqref{DP-0}.
Using \eqref{P00}, it is not hard to show that
\begin{equation}\label{P01}
\| \nabla u_\e \|_{L^2(\Omega)}
\le \| f\|_{L^2(\Omega)}
+ C \min (\sigma_\e, 1) \| F \|_{L^2(\Omega)},
\end{equation}
where we have extended $u_\e$ to $\Omega$ by zero.
The following theorem gives the $L^p$ estimate of $u_\e$ for $p> 2$.

\begin{thm}\label{lemma-P1}
Let $F \in L^p(\Omega)$ and $f\in L^p(\Omega; \R^d)$ for some $p\ge 2$.
Let $u_\e$ be the solution of \eqref{DP-0}, where $\Omega$ is a bounded Lipschitz domain. Then
\begin{equation}\label{P1-0}
\| u_\e \|_{L^p(\Omega)}
\le C \min (\sigma_\e, 1) \| f \|_{L^p(\Omega)} + C \min (\sigma_\e^2, 1) \| F \|_{L^p(\Omega)},
\end{equation}
where $C$ depends only on $d$, $p$, $c_0$ and $\Omega$.
\end{thm}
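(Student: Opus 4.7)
My plan is to prove \eqref{P1-0} by a nonlinear test-function argument that reduces the $L^p$ bound to the $L^2$ Poincar\'e-type inequality of Lemma \ref{Lemma-P}.

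For the base case $p=2$, I test \eqref{DP-0} against $u_\e$ itself, apply Cauchy--Schwarz, and use Lemma \ref{Lemma-P} to absorb $\|u_\e\|_{L^2}$ into $\min(\sigma_\e,1)\|\nabla u_\e\|_{L^2}$, obtaining $\|\nabla u_\e\|_{L^2}\le C\min(\sigma_\e,1)\|F\|_{L^2}+\|f\|_{L^2}$; a second application of Lemma \ref{Lemma-P} then yields the claimed $L^2$ bound on $u_\e$ itself.

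For $p>2$, the key device is to introduce $v=|u_\e|^{p/2-1}u_\e$, which vanishes on $\partial\Omega_{\e,\eta}$ and, at least formally, satisfies $|\nabla v|^2=(p/2)^2|u_\e|^{p-2}|\nabla u_\e|^2$. Since $\|u_\e\|_{L^p}^p=\|v\|_{L^2}^2$, applying Lemma \ref{Lemma-P} to $v$ gives
\[
\|u_\e\|_{L^p}^p \;\le\; C\min(\sigma_\e^2,1)\int_{\Omega_{\e,\eta}}|u_\e|^{p-2}|\nabla u_\e|^2\,dx.
\]
To control the right-hand side I test \eqref{DP-0} against $|u_\e|^{p-2}u_\e$:
\[
(p-1)\int |u_\e|^{p-2}|\nabla u_\e|^2 \;=\;\int F\,|u_\e|^{p-2}u_\e\;-\;(p-1)\int f\cdot|u_\e|^{p-2}\nabla u_\e.
\]
H\"older bounds the $F$ term by $\|F\|_{L^p}\|u_\e\|_{L^p}^{p-1}$, while Cauchy--Schwarz combined with H\"older gives
\[
\left|\int f\cdot|u_\e|^{p-2}\nabla u_\e\right|\;\le\;\|f\|_{L^p}\|u_\e\|_{L^p}^{(p-2)/2}\left(\int|u_\e|^{p-2}|\nabla u_\e|^2\right)^{1/2}.
\]
Young's inequality then absorbs this last factor into the left side, leaving $\int|u_\e|^{p-2}|\nabla u_\e|^2\le C(\|F\|_{L^p}\|u_\e\|_{L^p}^{p-1}+\|f\|_{L^p}^2\|u_\e\|_{L^p}^{p-2})$. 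Feeding this back into the Lemma \ref{Lemma-P} bound and dividing by $\|u_\e\|_{L^p}^{p-2}$ yields the quadratic inequality $\|u_\e\|_{L^p}^2\le C\min(\sigma_\e^2,1)\bigl(\|F\|_{L^p}\|u_\e\|_{L^p}+\|f\|_{L^p}^2\bigr)$, and one further application of Young's inequality produces \eqref{P1-0}.

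The main technical obstacle is that $|u_\e|^{p-2}u_\e$ is not automatically in $H^1_0(\Omega_{\e,\eta})$ when $p>2$, so the chain-rule identity for $v$ is not immediately legitimate. I would handle this by a standard truncation: replace $u_\e$ by $u_\e^M:=\min(\max(u_\e,-M),M)$, carry out the whole calculation with the Lipschitz test functions built from $u_\e^M$ (the Stampacchia chain rule then applies), and pass to the limit $M\to\infty$ via monotone convergence, using that every intermediate estimate is uniform in $M$.
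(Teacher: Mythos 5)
Your proof is correct and fills in the detail the paper outsources: for $p>2$ the paper simply cites \cite{shen2023uniform}, while the Moser-type argument you supply—testing against $|u_\e|^{p-2}u_\e$, applying the $L^2$ Poincar\'e inequality of Lemma~\ref{Lemma-P} to $v=|u_\e|^{p/2-1}u_\e$ (so that $\|u_\e\|_{L^p}^p=\|v\|_{L^2}^2\le C\min(\sigma_\e^2,1)\|\nabla v\|_{L^2}^2$), and closing the resulting quadratic inequality by Young (note $\min(\sigma_\e^2,1)^2=\min(\sigma_\e^4,1)$, whose square root gives precisely the $\min(\sigma_\e^2,1)$ coefficient on $F$)—is the natural self-contained route and almost certainly what the cited reference does; the $p=2$ case you give is identical to the paper's. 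The truncation you sketch to legitimize the nonlinear test function is also sound: for $p\ge 2$ both $t\mapsto|t|^{p-2}t$ and $t\mapsto|t|^{p/2-1}t$ are Lipschitz on $[-M,M]$, so the truncated test functions lie in $H^1_0(\Omega_{\e,\eta})$, the H\"older bound $\int|f|^2|u_\e|^{p-2}\le\|f\|_{L^p}^2\|u_\e\|_{L^p}^{p-2}$ uses exactly $p\ge 2$, all estimates are uniform in $M$, and monotone convergence passes to the limit.
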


\begin{proof}
The case $p=2$ follows readily from the energy estimate \eqref{P01} and \eqref{P00}.
The case $p>2$ was proved in \cite{shen2023uniform}.
\end{proof}

To establish the $L^p$ estimates for $\nabla u_\e$, 
we consider the local $W^{1,p} $ estimates for solutions of 

 \begin{equation}\label{cellprob-4}
 \left\{
   \aligned
       -\Delta u_\e  & = F + \text{\rm div} (f) & \quad &  \text{ in }   \e (\tQ_z\setminus T_{z, \eta}),   \\ 
       u_\e  & = 0  & \quad &  \text{ in }  \e T_{z, \eta}, 
   \endaligned
   \right.
 \end{equation}
where $z\in \mathbb{Z}^d$ and  $\tQ_z=   Q(z, 17/16)$. Let 
  \begin{equation}\label{PhiR-4}
        \Phi_p(R) = \begin{cases}
            1 & \text{if} \ d \geq 3 \ \text{and} \ 2 <p < d, \\
            (\ln R)^{1 - \frac{1}{d}} & \text{if} \ d \geq 3 \ \text{and} \ p = d, \\ 
            R^{1-\frac{d}{p}} & \text{if} \ d \geq 3 \ \text{and} \ d<p<\infty, \\ 
            R^{1-\frac{2}{p}}(\ln R)^{-1} & \text{if} \ d=2 \ \text{and} \ 2 < p < \infty,
        \end{cases}   
    \end{equation}
    for $R>2$.

\begin{lemma}\label{Main4-4}
Let $2 < p < \infty$. 
 Let $u_\e$ be a solution of \eqref{cellprob-4} with $F \in L^p(\e \tQ_z) $ and $f \in L^p (\e \tQ_z ; \mathbb{R}^d)$. 
 Then for $d\geq 3$,
\begin{equation}\label{mainloces-4}
\begin{split}
    \left(\fint_{\e Q_z} |\nabla u_\e|^p \right)^{1/p}
    \leq C|\alpha| \e^{-1 } \eta^{\frac{d}{p}-1} + C \Phi_p(\eta^{-1} ) 
    \left( \fint_{\e \tQ_z} ( |\e F| + |f|)^p \right)^{1/p} 
       \\ + C \e^{  -1}\Phi_p(\eta^{-1})
       \left(\fint_{\e \tQ_z\backslash Q (y_z, \e/3)} |u_\e - \alpha |^2 dx  \right)^{1/2}, 
    \end{split}
\end{equation}
and for $d=2$
\begin{equation}\label{mainlocesd2-4}
\begin{split}
       \left(\fint_{\e Q_z} |\nabla u_\e|^p \right)^{1/p}
     \leq C|\alpha| \e^{ -1 } \Phi_p(\eta^{-1})
    + C \Phi_p(\eta^{-1} ) \left( \fint_{\e \tQ_z} ( |\e F| + |f|)^p  \right)^{1/p} 
       \\ + C \e^{  -1}\Phi_p(\eta^{-1})\left(\fint_{\e\tQ_z\backslash Q (y_z, \e/3)} |u_\e - \alpha |^2 dx  \right)^{1/2}, 
    \end{split}
\end{equation}
where $\alpha \in \R$, $y_z=z+x_z$ and $C$ depends only on $d$,  $p$ and $\{ Y_z^s\}$.
\end{lemma}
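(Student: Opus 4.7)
The plan is to rescale to unit scale and then decompose $u_\e$ using a cell corrector to isolate the influence of the single hole. Under the dilation $y = x/\e$, the function $v(y) = u_\e(\e y)$ solves a Dirichlet problem on $\tQ_z \setminus T_{z,\eta}$ with rescaled data, and the factor $\e^{-1}$ in \eqref{mainloces-4}--\eqref{mainlocesd2-4} arises upon undoing the dilation; I therefore work at unit scale throughout.

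Introduce the cell corrector $\chi_z$ defined by $\Delta \chi_z = 0$ in $\tQ_z \setminus T_{z,\eta}$, $\chi_z = 0$ on $T_{z,\eta}$, and $\chi_z = 1$ on $\partial \tQ_z$, so that $1 - \chi_z$ is the capacitary potential of the hole inside $\tQ_z$. Writing $v = \alpha \chi_z + w$, the remainder $w$ vanishes on $T_{z,\eta}$, satisfies $-\Delta w = F + \text{\rm div}(f)$, and coincides with $v - \alpha$ on the annulus $\tQ_z \setminus Q(y_z, 1/3)$ up to an error $\alpha(1 - \chi_z)$ that decays as $\eta^{d-2}$ away from the hole. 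A direct capacity computation using $|\nabla \chi_z(x)| \lesssim \eta^{d-2}|x - y_z|^{1-d}$ for $d \geq 3$ gives $\|\nabla \chi_z\|_{L^p(Q_z)} \lesssim \eta^{d/p - 1}$, and reinstating $\e$ produces the first term $C|\alpha|\e^{-1}\eta^{d/p-1}$ in the conclusion (with the logarithmic corrections at $p = d$ and in $d = 2$ absorbed into $\Phi_p$, the latter requiring the Poincaré-type inequality from Lemma \ref{lemma-P0}).

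The remaining $L^p$ bound for $\nabla w$ on $Q_z$ is obtained by combining an interior Calderón--Zygmund estimate on the smooth annulus $\tQ_z \setminus Q(y_z, 1/3)$, which uses only $\|F\|_{L^p}, \|f\|_{L^p}$, and $\|v - \alpha\|_{L^2}$ on that annulus, with a Meyers-type $W^{1,p}$ estimate on the inner perforated ball $Q(y_z, 1/2) \setminus T_{z,\eta}$, where $w$ satisfies homogeneous Dirichlet data on $T_{z,\eta}$. For the latter, I would dilate by $\eta^{-1}$ so that the hole has unit size, apply Meyers on the resulting ball of radius $\sim \eta^{-1}$, and scale back; Green's function estimates for the Dirichlet Laplacian on the punctured ball show that the operator norm scales precisely as $\Phi_p(\eta^{-1})$. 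The main obstacle is obtaining the \emph{sharp} constant at the borderline exponents---$p = d$ for $d \geq 3$, and all $p > 2$ in $d = 2$---where capturing the logarithmic factor $|\ln \eta|^{1 - 1/d}$ or $|\ln \eta|^{-1}$ requires a Lorentz-space refinement of the $W^{1,p}$ bound on the dilated problem, rather than a direct Calderón--Zygmund argument.
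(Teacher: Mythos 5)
The overall decomposition you propose—dilate to unit scale so the result reduces to the case $\e=1$, write $v=\alpha\chi_z+w$ with $\chi_z$ a capacitary corrector for the hole, bound $|\alpha|\,\|\nabla\chi_z\|_{L^p}$ by the first term, and control $\nabla w$ by splitting into an annular region away from the hole and a perforated ball near it—is indeed the skeleton of the argument the paper invokes, since the paper's proof is nothing more than a pointer to \cite[Theorem 6.1]{MR4630318} (itself based on \cite{amrouche1997dirichlet}) plus dilation. Your computation of $\|\nabla\chi_z\|_{L^p(Q_z)}\sim\eta^{d/p-1}$ for $d\ge3$, $p>2$, is correct and reproduces the first term in \eqref{mainloces-4}; the analogous $|\ln\eta|^{-1}\eta^{2/p-1}=\Phi_p(\eta^{-1})$ bound in $d=2$ reproduces the first term of \eqref{mainlocesd2-4}. (A small side remark: for $d\ge3$ there is \emph{no} logarithm in the first term even at $p=d$, since $\eta^{d/p-1}\to\eta^0=1$ there; the log in $\Phi_d$ appears only in the other two terms, so your parenthetical about absorbing ``logarithmic corrections at $p=d$'' into the first term is a slight misstatement.)

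The genuine gap is in the final step, and you have in fact flagged it yourself. You write that obtaining the sharp $\Phi_p(\eta^{-1})$ constant at the borderline exponents ``requires a Lorentz-space refinement of the $W^{1,p}$ bound on the dilated problem, rather than a direct Calder\'on--Zygmund argument,'' but this is a placeholder, not an argument: you neither identify which Lorentz estimate you have in mind nor show why it would produce the factor $(\ln R)^{1-1/d}$ at $p=d$ or the gain $|\ln\eta|^{-1}$ in $d=2$. Nor would a ``Meyers-type'' estimate suffice, since Meyers' theorem only covers $p$ near $2$ and gives no quantitative dependence on the ratio of domain size to hole size. The mechanism that actually closes this step—and the reason the paper cites \cite{amrouche1997dirichlet}—is the sharp $W^{1,p}$ theory for the exterior Dirichlet problem in weighted Sobolev spaces: after you dilate the perforated ball $Q(y_z,1/2)\setminus T_{z,\eta}$ by $\eta^{-1}$, you get a ball of radius $\sim\eta^{-1}$ with a unit hole, and the dependence of the $W^{1,p}$ solution operator norm on that radius (which is exactly $\Phi_p(\eta^{-1})$, including the $(\ln R)^{1-1/d}$ factor at $p=d$) comes from those weighted exterior estimates, essentially from the precise decay of the exterior Green's function and the corrector at infinity. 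Without invoking that theory, or some substitute of equal precision, your write-up does not actually establish the stated inequalities at $p=d$ (for $d\ge3$) or for any $p>2$ (for $d=2$). You would also need to spell out the cutoff/patching step needed to glue the annulus estimate with the perforated-ball estimate—the function $w$ has nonzero data on $\partial Q(y_z,1/2)$, so the commutator terms produced by a cutoff must be handled—but that is routine once the sharp exterior estimate is in hand.
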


\begin{proof}
The case where  $\e=1$ and $x_z=0$ was proved in \cite[Theorem 6.1]{MR4630318}, using a result in 
\cite{amrouche1997dirichlet}. 
 The same argument, together with the correctors constructed in this paper, 
 also gives \eqref{mainloces-4}-\eqref{mainlocesd2-4} for $\e=1$ and $x_z \in Q(0, 1/2)$ with constants
 independent of $x_z$.
 We point out that there is a minor flaw in the proof of Theorem 6.1 in \cite{MR4630318}.
In the case $d=2$, 
 the corrector defined in \cite[(6.6)]{MR4630318} does not satisfy the condition in \cite[(6.11)]{MR4630318}.
 The mistake can be fixed easily by using the corrector in Section \ref{section-C2}.
 
The general case $0< \e <1$ follows readily from the case $\e=1$ by dilation.
Note that the parameter $\eta$ remains invariant under dilation.
\end{proof}

\begin{remark}\label{re-P1}

Let $u_\e$ be the  solution of \eqref{DP-0} with $F\in L^p(\Omega)$ and $f\in L^p(\Omega; \R^d)$ for some $p>2$.
Let 
\begin{equation}\label{O-ee}
\Omega^\prime_{\e, \eta}
=\bigcup_z \e (Q_z \setminus T_{z, \eta}),
\end{equation}
where the union is taken over those $z$'s in $\mathbb{Z}^d$  for which $\e Q_z \subset \Omega$.
It follows from \eqref{mainloces-4}-\eqref{mainlocesd2-4} with $\alpha=0$ and a covering argument  that 
\begin{equation}\label{P-2a}
\int_{\Omega^{\prime\prime}_{\e, \eta}} |\nabla u_\e|^p 
\le C |\Phi_p (\eta^{-1})|^p 
\int_\Omega (\e|F|+  |f|)^p
+ C \e^{-p} |\Phi_p (\eta^{-1})|^p \int_{\Omega} |u_\e |^p,
\end{equation}
where $\Omega_{\e, \eta}^{\prime\prime}= \{ x \in \Omega^\prime_{\e, \eta} : \text{\rm dist}(x, \partial \Omega)\ge c\e \}$.
For the region near $\partial\Omega$ and away from the holes, we use the boundary $W^{1, p}$
estimates for Laplace's equation in $C^1$ domains \cite{MR1331981}  to obtain
\begin{equation}
\int_{B(x, c\e)} |\nabla u_\e|^p
\le C \e^{-p} \int_{B(x, 2c\e)}  |u_\e|^p 
+C \int_{B(x, 2c\e)} (\e |F|+ |f|)^p,
\end{equation}
where $x\in \Omega$ and $B(x, 2c\e)\cap \Omega =B(x, 2c\e) \cap \Omega_{\e, \eta}$.
By a covering argument we obtain 
\begin{equation}\label{P-2b}
\int_{\Omega_{\e, \eta}\setminus \Omega_{\e, \eta}^{\prime\prime} } |\nabla u_\e|^p
\le C \int_{\Omega} ( \e |F|+ |f|)^p
+  C \e^{-p} \int_{\Omega_{\e, \eta} } |u_\e|^p.
\end{equation}
This, together with \eqref{P-2a}, gives
\begin{equation*}
\aligned
\int_\Omega |\nabla u_\e|^p
 & \le C |\Phi_p (\eta^{-1})|^p 
\int_\Omega ( \e |F| + |f|)^p
+ C \e^{-p} |\Phi_p (\eta^{-1})|^p \int_{\Omega} |u_\e |^p\\
&\le C  |\Phi_p(\eta^{-1}) |^p 
\left( 1 + \e^{-1} \min (\sigma_\e, 1)\right)^p \int_\Omega | f|^p\\
 &\qquad \qquad  + C   |\Phi_p(\eta^{-1}) |^p 
\left( 1 + \e^{-1} \min (\sigma_\e, 1)\right)^{2p}  \int_\Omega | \e F |^p, 
\endaligned
\end{equation*}
where we have used \eqref{P1-0} for the last inequality.
As a result, we have proved that 
\begin{equation}\label{P-2c}
\aligned
\| \nabla u_\e\|_{L^p(\Omega)}
 & \le C \Phi_p(\eta^{-1}) ( 1+ \e^{-1} \min (\sigma_\e, 1)) \| f \|_{L^p(\Omega)}\\
 & \quad \qquad
 + C \Phi_p (\eta^{-1}) \left( 1+\e^{-1} \min (\sigma_\e, 1) \right)^2
\| \e F \|_{L^p(\Omega)}
\endaligned
\end{equation}
for $p>2$ and $d\ge 2$.
Suppose $F=0$.
In the case $0< \sigma_\e \le 1$, $d\ge 3$ and $p>d$, it follows from \eqref{P-2c} that 
\begin{equation}\label{P-2d}
\|\nabla u_\e \|_{L^p(\Omega)}
\le C  \eta^{\frac{d}{p} -\frac{d}{2}}  \| f \|_{L^p(\Omega)}.
\end{equation}
Since  \eqref{P-2d} also holds for $p=2$, by Riesz Thorin  Interpolation Theorem,
\eqref{P-2d} holds for $2< p<\infty$.
This gives  the proof of Theorem \ref{addin-thm} for the case $d\ge 3$.
\end{remark}

\begin{remark}\label{re-P2}

Suppose $\sigma_\e \ge 1$. It follows from \eqref{P-2c} that if $F=0$, 
\begin{equation}\label{P-3a}
\|\nabla u_\e \|_{L^p(\Omega)}
\le C  \e^{-1} \Phi_p (\eta^{-1})   \| f \|_{L^p(\Omega)}.
\end{equation}
It follows that 
\begin{equation}\label{P-3b}
\|\nabla u_\e \|_{L^p(\Omega)}
\le 
\left\{
\aligned
& C \e^{-1} \eta^{\frac{d}{p}-1}  \| f\|_{L^p(\Omega)} & \quad & \text{ if } d\ge 3 \text{ and } p>d,\\
& C \e^{-1} \eta^{\frac{2}{p} -1} |\ln \eta|^{-1}  \| f \|_{L^p(\Omega)} & \quad & \text{ if } d=2 \text{ and } 2< p< \infty.
\endaligned
\right.
\end{equation}
This gives the estimates in Theorem \ref{main-thm-2} for the case $d\ge 2$ and $d< p< \infty$.
\end{remark}

To deal with the remaining cases in Theorem \ref{main-thm-1} and \ref{main-thm-2}, we let
$$
\alpha = \fint_{\e \tQ_z\setminus Q(y_z, \e/3)} u_\e
$$
in \eqref{mainloces-4}-\eqref{mainlocesd2-4}.
By Poincar\'e's inequality we obtain 
\begin{equation}\label{P-4a}
\aligned
\left(\fint_{\e Q_z} |\nabla u_\e|^p \right)^{1/p}
\le C \e^{-1} \eta^{\frac{d}{p}-1} \fint_{\e \tQ_z} |u_\e|
 & +  C \Phi_p (\eta^{-1})
\left(\fint_{\e \tQ_z} (|\e F| + |f|)^p \right)^{1/p}\\
& + C \Phi_p (\eta^{-1}) \left(\fint_{\e \tQ_z} |\nabla u_\e|^2 \right)^{1/2}
\endaligned
\end{equation}
for $d\ge 3$, and
\begin{equation}\label{P-4b}
\aligned
\left(\fint_{\e Q_z} |\nabla u_\e|^p \right)^{1/p}
\le C \e^{-1} \Phi_p(\eta^{-1} ) \fint_{\e \tQ_z} |u_\e|
 & + C \Phi_p (\eta^{-1})
\left(\fint_{\e \tQ_z} (|\e F| + |f|)^p \right)^{1/p}\\
& + C \Phi_p (\eta^{-1}) \left(\fint_{\e \tQ_z} |\nabla u_\e|^2 \right)^{1/2}
\endaligned
\end{equation}
for $d=2$.
Define
\begin{equation*}
S_\e ( F, f )(x) =\left(\fint_{ Q(x, 2\e )} |\nabla u_\e|^2 \right)^{1/2}.
\end{equation*}
It follows from \eqref{P-4a}-\eqref{P-4b} by  a covering argument  that 
\begin{equation}\label{P-4c}
\aligned
\int_{\Omega_{\e, \eta}^{\prime\prime}}
|\nabla u_\e|^p
\le C \e^{-p} \eta^{d-p}  \int_{\Omega} |u_\e|^p
 & + C |\Phi_p (\eta^{-1}) |^p 
\int_{\Omega} (| \e F| +|f|)^p\\
& + C| \Phi_p(\eta^{-1})|^p \int_\Omega |S_\e (F, f)|^p
\endaligned
\end{equation}
for $d\ge 3$, and that 
\begin{equation}\label{P-4d}
\aligned
\int_{\Omega_{\e, \eta}^{\prime\prime}}
|\nabla u_\e|^p
\le C \e^{-p} |\Phi_p(\eta^{-1}) |^p \int_{\Omega} |u_\e|^p
 & + C |\Phi_p (\eta^{-1}) |^p 
\int_{\Omega} (| \e F| +|f|)^p\\
& + C \Phi_p(\eta^{-1})|^p \int_\Omega |S_\e (F, f)|^p
\endaligned
\end{equation}
for $d=2$, where $\Omega^{\prime\prime} _{\e, \eta}$ is  the same as in  \eqref{P-2a}.
For the region near $\partial \Omega$ and away from the holes, we use the  local $W^{1, p}$ estimate in $C^1$ domains,
\begin{equation}\label{local-a}
\left(\fint_{Q(x, c\e)} |\nabla u_\e|^p \right)^{1/p}
\le C \left(\fint_{Q(x, 2\e)} |\nabla u_\e|^2 \right)^{1/2}
+ C \left(\fint_{Q(x, 2\e)} (|\e F | + |f|)^p\right)^{1p},
\end{equation}
where $x\in \Omega$,  dist$(x, \partial\Omega) \le C \e$ and $B(x, 2c \e)\cap \Omega=B(x, 2c\e)\cap \Omega_{\e, \eta}$.
By a covering argument, this leads to
\begin{equation}\label{B-e0}
\int_{\Omega_{\e, \eta} \setminus \Omega_{\e, \eta}^{\prime\prime}}
|\nabla u_\e|^p 
\le C \int_\Omega |S_\e (F, f)|^p
+ C \int_\Omega (|\e F| +|f|)^p.
\end{equation}

\begin{lemma}
Let $\Omega$ be a bounded $C^1$ domain in $\R^d$.
Let $u_\e\in H_0^1(\Omega_{\e, \eta})$ be the  solution of  \eqref{DP-0} with $F\in L^p(\Omega)$ and $f\in L^p(\Omega; \R^d)$ for some $p>2$.
Then
\begin{equation}\label{P-5a}
\aligned
\|\nabla u_\e \|_{L^p(\Omega)}
 & \le C \e^{-1} \eta^{\frac{d}{p}-1} \| u_\e \|_{L^p(\Omega)}\\
& \qquad+ C \Phi_p (\eta^{-1} ) \left\{
\| \e |F| + |f| \|_{L^p(\Omega)}
+ \| S_\e (F, f) \|_{L^p(\Omega)} \right\}
\endaligned
\end{equation}
for $d\ge 3$, and
\begin{equation}\label{P-5b} 
\aligned
\|\nabla u_\e \|_{L^p(\Omega)}
 & \le C \e^{-1}  \Phi_p (\eta^{-1} ) \| u_\e \|_{L^p(\Omega)}\\
 &\qquad
 + C \Phi_p (\eta^{-1})  \left\{
\| \e |F| + |f| \|_{L^p(\Omega)}
+ \| S_\e (F, f) \|_{L^p(\Omega)} \right\}
\endaligned
\end{equation}
for $d=2$, where $C$ depends on $d$, $p$, $\{Y_z^s\}$ and $\Omega$.
\end{lemma}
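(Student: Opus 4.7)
The plan is to assemble the lemma directly from the three estimates already set up in this section: the interior bound \eqref{P-4c} (for $d\ge 3$) or \eqref{P-4d} (for $d=2$) on the ``well-inside'' region $\Omega_{\e,\eta}''$, and the near-boundary bound \eqref{B-e0} on the complementary strip $\Omega_{\e,\eta}\setminus\Omega_{\e,\eta}''$. No new analytic input is required; the proof is just a matter of splitting, raising to the $p$-th root, and keeping careful track of prefactors.

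First I would decompose $\Omega_{\e,\eta}=\Omega_{\e,\eta}''\cup(\Omega_{\e,\eta}\setminus\Omega_{\e,\eta}'')$. On $\Omega_{\e,\eta}''$ every nearby cell $\e\tQ_z$ is contained in $\Omega$, so Lemma \ref{Main4-4} applies cell-by-cell; summing over $z$ via a finite overlap covering yields \eqref{P-4c} in dimension $d\ge 3$ and \eqref{P-4d} in dimension $d=2$. On the complementary boundary strip $\Omega_{\e,\eta}\setminus\Omega_{\e,\eta}''$, each point is within distance $C\e$ of $\partial\Omega$ and (by the choice of which $z$ contribute to \eqref{O-e}) no hole is nearby, so a ball $B(x,2c\e)$ meets no hole; the standard boundary $W^{1,p}$ estimate \eqref{local-a} in the $C^1$ domain $\Omega$ then applies, and summing over a finite-overlap Whitney-type cover produces \eqref{B-e0}.

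Finally, I would add the two contributions and take $p$-th roots. Because $\Phi_p(\eta^{-1})\ge 1$ for $\eta<1/2$, the $C\bigl(\|S_\e(F,f)\|_{L^p(\Omega)}+\|\e|F|+|f|\|_{L^p(\Omega)}\bigr)$ term from \eqref{B-e0} is absorbed into the $C\Phi_p(\eta^{-1})(\cdots)$ term coming from the interior estimate, and the coefficient of $\|u_\e\|_{L^p(\Omega)}$ is inherited untouched from \eqref{P-4c} or \eqref{P-4d}, namely $\e^{-1}\eta^{\frac{d}{p}-1}$ when $d\ge 3$ and $\e^{-1}\Phi_p(\eta^{-1})$ when $d=2$. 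Since there is no genuine obstacle beyond the bookkeeping of constants, the chief point to verify is merely that the decomposition $\Omega_{\e,\eta}=\Omega_{\e,\eta}''\cup(\Omega_{\e,\eta}\setminus\Omega_{\e,\eta}'')$ is consistent with the two cover arguments (i.e.\ that the boundary strip contains no perforations), which is built into the definition \eqref{O-ee} and the convention that only those $z$ with $\e(z+Y)\subset\Omega$ contribute to the holes.
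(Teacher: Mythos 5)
Your proposal is correct and follows exactly the paper's proof, which is the one-line remark ``This follows from \eqref{P-4c}--\eqref{P-4d} and \eqref{B-e0}'': add the interior bound over $\Omega_{\e,\eta}''$ to the boundary-strip bound and take $p$-th roots, absorbing the constants from \eqref{B-e0} into $C\Phi_p(\eta^{-1})$. The additional paragraphs in your write-up simply re-derive \eqref{P-4c}--\eqref{P-4d} and \eqref{B-e0}, which the paper establishes immediately before stating the lemma.
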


\begin{proof}
This follows from \eqref{P-4c}- \eqref{P-4d} and \eqref{B-e0}.
\end{proof}

Using  Theorem \ref{thm-S0}, we now give the proof of Theorems  \ref{main-thm-1} and \ref{main-thm-2}.

\begin{proof}[Proof of Theorem \ref{main-thm-1}]

Suppose $0<\sigma_\e \le 1$ and $2< p< \infty$.
The case $d\ge 3$ is treated already in Remark \ref{re-P1}.
Assume $d=2$.
Then $\sigma_\e=\e |\ln \eta|^{1/2}$ and $\Phi_p(\eta^{-1} )= \eta^{\frac{2}{p}-1} |\ln \eta|^{-1}$.
Let $u_\e$ be the solution of \eqref{DP-0} with $F=0$.
By \eqref{P-5b} and  Theorem \ref{thm-S0},
\begin{equation}
\aligned
\|\nabla u_\e \|_{L^p(\Omega)}
 & \le C \e^{-1} \Phi_p(\eta^{-1}) \| u_\e \|_{L^p(\Omega)}
+ C \Phi_p(\eta^{-1}) \| f\|_{L^p (\Omega)}\\
& \le C \Phi_p(\eta^{-1}) \e^{-1} \sigma_\e \| f\|_{L^p(\Omega)}\\
&= C \eta^{\frac{2}{p}-1} |\ln \eta|^{-\frac12} \| f\|_{L^p(\Omega)},  
\endaligned
\end{equation}
where we have used  Theorem \ref{lemma-P1} for the second inequality.
\end{proof}

\begin{proof}[Proof of Theorem \ref{main-thm-2}]

Suppose $\sigma_\e\ge 1$ and $2<p<\infty$.
The case for $d\ge 2$ and $d<p<\infty$ is treated in Remark \ref{re-P2}.
Assume $d\ge 3$ and $2<p\le d$.
Let $u_\e$ be the solution of \eqref{DP-0} with $F=0$.
It follows from \eqref{P-5a} and Theorem \ref{thm-S0} that 
\begin{equation}
\aligned
\|\nabla u_\e \|_{L^p(\Omega)}
 & \le C \e^{-1} \eta^{\frac{d}{p}-1} \| u_\e \|_{L^p(\Omega)}
+ C \Phi_p (\eta^{-1}) \| f \|_{L^p(\Omega)}\\
& \le C \left(\e^{-1} \eta^{\frac{d}{p}-1} + \Phi_p(\eta^{-1}) \right) \| f\|_{L^p(\Omega)},
\endaligned
\end{equation}
where we have used Theorem \ref{lemma-P1} for the last inequality.
In view of \eqref{PhiR-4} we obtain 
$$
\| \nabla u_\e \|_{L^p(\Omega)} \le  \left(\e^{-1} \eta^{\frac{d}{p}-1} + 1  \right) \| f\|_{L^p(\Omega)},
$$
for $2<p<d$, and
$$
\| \nabla u_\e \|_{L^p(\Omega)} \le  \left(\e^{-1}  + |\ln \eta|^{1-\frac{1}{d}}  \right) \| f\|_{L^p(\Omega)}
$$
for $p=d$.
This gives the estimates in Theorem \ref{main-thm-2} for the case $2<p\le d$.
\end{proof}


\section{Estimates of $B_p(\Omega_{\e, \eta})$}\label{section-B}

In this section we establish upper bounds for $B_p(\Omega_{\e, \eta})$ in \eqref{W1p-0}.
We begin with the case $1<p\le 2$.
The estimate \eqref{uB-1} below  is sharp. See Theorem \ref{CPthm-6}. 

\begin{thm}\label{addin-thm}
    Suppose $1< p \leq 2$.
Let $\Omega$ be a bounded Lipschitz  domain in $\R^d$, $d\ge 2$,  and $\Omega_{\e, \eta}$ be given by \eqref{O-e}. Then 
 \begin{equation}\label{uB-1}
    \begin{aligned} 
   B_p(\Omega_{\e, \eta} ) \leq C \min (\sigma_\e, 1),
       \end{aligned}
     \end{equation}
where $C$ depends only on $d$, $p$, $\Omega$, and $\{Y_z^s\}$.
\end{thm}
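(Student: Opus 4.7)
The plan is a standard duality argument, made possible because the hypothesis $1<p\le 2$ gives $p'\ge 2$, which is precisely the range covered by Theorem \ref{lemma-P1}. Since $B_p$ controls the response to the $F$-datum alone, I would first reduce to the case $f=0$, so that $u_\e\in H_0^1(\Omega_{\e,\eta})$ solves $-\Delta u_\e=F$ with $F\in L^p(\Omega_{\e,\eta})$.

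Next, I would use the $L^p$–$L^{p'}$ duality to write
\[
\|\nabla u_\e\|_{L^p(\Omega_{\e,\eta})}=\sup\Bigl\{\Bigl|\int_{\Omega_{\e,\eta}}g\cdot\nabla u_\e\Bigr|\colon g\in L^{p'}(\Omega_{\e,\eta};\R^d),\ \|g\|_{L^{p'}}\le 1\Bigr\}.
\]
For each admissible $g$, introduce the auxiliary (adjoint) problem: let $v_\e\in H_0^1(\Omega_{\e,\eta})$ solve
\[
-\Delta v_\e=\operatorname{div}(g)\quad\text{in }\Omega_{\e,\eta},\qquad v_\e=0\quad\text{on }\partial\Omega_{\e,\eta}.
\]
Testing the equation for $v_\e$ against $u_\e$, and the equation for $u_\e$ against $v_\e$, and comparing, one gets the pairing identity
\[
\int_{\Omega_{\e,\eta}}g\cdot\nabla u_\e=-\int_{\Omega_{\e,\eta}}F\, v_\e .
\]

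At this point the proof is essentially done: by H\"older's inequality,
\[
\Bigl|\int_{\Omega_{\e,\eta}}g\cdot\nabla u_\e\Bigr|\le \|F\|_{L^p(\Omega_{\e,\eta})}\|v_\e\|_{L^{p'}(\Omega_{\e,\eta})},
\]
and since $p'\ge 2$, I can apply Theorem \ref{lemma-P1} to $v_\e$ (whose data has only a divergence-form part $g$ and no zeroth-order source) to conclude
\[
\|v_\e\|_{L^{p'}(\Omega_{\e,\eta})}\le C\min(\sigma_\e,1)\|g\|_{L^{p'}(\Omega_{\e,\eta})}\le C\min(\sigma_\e,1).
\]
Taking the supremum over $g$ yields $\|\nabla u_\e\|_{L^p(\Omega_{\e,\eta})}\le C\min(\sigma_\e,1)\|F\|_{L^p(\Omega_{\e,\eta})}$, which is the claimed bound on $B_p(\Omega_{\e,\eta})$.

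There is no real obstacle here: the only subtle point is ensuring that the extension-by-zero convention used in Theorem \ref{lemma-P1} is consistent with the dual pairing, and that $p'\ge 2$ (which holds since $p\le 2$) so the $L^{p'}$ bound from Theorem \ref{lemma-P1} is actually applicable. The sharpness claim refers forward to Theorem \ref{CPthm-6} and is not part of the present argument.
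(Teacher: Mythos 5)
Your argument is correct and is precisely the one the paper intends: the paper's proof of Theorem \ref{addin-thm} simply cites \cite{shen2023uniform}, noting that the result follows from the $L^p$ estimate \eqref{P1-0} (Theorem \ref{lemma-P1}) by duality, which is exactly the reduction you carried out. The pairing identity $\int g\cdot\nabla u_\e=-\int F v_\e$ and the application of Theorem \ref{lemma-P1} with $f=g$, $F=0$, and exponent $p'\ge 2$ are both valid, so there is nothing to add.
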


\begin{proof}
This was proved in \cite{shen2023uniform}, using \eqref{P1-0} and a duality argument.
\end{proof}

Next, we consider the case $2< p< \infty$ and $0< \sigma_\e\le 1$.
The estimate \eqref{W1pB-01} below  is sharp for $d\ge 3$ and $2< p< \infty$.
See Theorem \ref{B-low-1}.

\begin{thm}\label{main-thm-3}
Suppose that $0< \sigma_\e \le 1$ and $2< p< \infty$.
Let $\Omega$ be a bounded $C^1$ domain in $\R^d$ and $\Omega_\e$ be given by \eqref{O-e}.
Then
\begin{equation}\label{W1pB-01}
B_p (\Omega_{\e, \eta} ) \le 
C \e \eta^{1-d + \frac{d}{p}}, 
\end{equation}
where $C$ depends only on $d$, $p$, $\Omega$, and $\{Y_z^s\}$.
\end{thm}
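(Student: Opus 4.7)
\textbf{Proof proposal for Theorem \ref{main-thm-3}.}
The plan is to combine the large-scale estimate in Theorem \ref{thm-S0} with the $L^p$ bound on $u_\e$ from Theorem \ref{lemma-P1}, through the localization inequalities \eqref{P-5a}--\eqref{P-5b} that are already set up in Section \ref{section-R}. No new tool beyond what has been established in the paper would be needed; the argument is a short combination of these three ingredients together with one elementary algebraic identity.

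First, let $u_\e \in W_0^{1,p}(\Omega_{\e,\eta})$ solve \eqref{DP-0} with $f=0$ and $F\in L^p(\Omega)$. Applying \eqref{P-5a} for $d\ge 3$, or \eqref{P-5b} for $d=2$, both with $f=0$, I would obtain
\begin{equation*}
\|\nabla u_\e\|_{L^p(\Omega)}
\le C \e^{-1}\Psi_p\,\|u_\e\|_{L^p(\Omega)}
+ C\Phi_p(\eta^{-1})\bigl\{\e\|F\|_{L^p(\Omega)} + \|S_\e(F,0)\|_{L^p(\Omega)}\bigr\},
\end{equation*}
where $\Psi_p=\eta^{d/p-1}$ when $d\ge 3$ and $\Psi_p=\Phi_p(\eta^{-1})$ when $d=2$. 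Since $\sigma_\e \le 1$, Theorem \ref{thm-S0} gives $\|S_\e(F,0)\|_{L^p(\Omega)} \le C\sigma_\e \|F\|_{L^p(\Omega)}$, while Theorem \ref{lemma-P1} gives $\|u_\e\|_{L^p(\Omega)} \le C\sigma_\e^2 \|F\|_{L^p(\Omega)}$. Plugging these in yields
\begin{equation*}
\|\nabla u_\e\|_{L^p(\Omega)}
\le C \e^{-1}\Psi_p\sigma_\e^2\,\|F\|_{L^p(\Omega)}
+ C\Phi_p(\eta^{-1})(\e+\sigma_\e)\,\|F\|_{L^p(\Omega)}.
\end{equation*}

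The key algebraic observation I would exploit is that the leading coefficient is exactly the target bound:
\begin{equation*}
\e^{-1}\Psi_p\sigma_\e^2 \;=\; \e\,\eta^{1-d+\frac{d}{p}},
\end{equation*}
which is a one-line check: for $d\ge 3$ use $\sigma_\e^2=\e^2\eta^{2-d}$, and for $d=2$ use $\sigma_\e^2=\e^2|\ln\eta|$ together with $\Phi_p(\eta^{-1})=\eta^{2/p-1}|\ln\eta|^{-1}$. This identity is what makes everything align with the sharp exponent in \eqref{W1pB-01}.

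The remaining task is to verify that the second term $C\Phi_p(\eta^{-1})(\e+\sigma_\e)$ is dominated by $C\e\,\eta^{1-d+d/p}$. Since $\sigma_\e\le 1$ forces $\e\le\sigma_\e$ in every dimension $d\ge 2$, this reduces to showing $\Phi_p(\eta^{-1})\sigma_\e \le C\e\,\eta^{1-d+d/p}$. A case inspection of \eqref{PhiR-4} handles this: for $d\ge 3$ the ratio simplifies to $\Phi_p(\eta^{-1})\eta^{d/2-d/p}$, which is bounded in each subcase because $d(1/2-1/p)>0$ while $\Phi_p$ grows at most like $|\ln\eta|^{1-1/d}$ (at $p=d$) or $\eta^{d/p-1}$ (for $p>d$); for $d=2$ the ratio collapses to $|\ln\eta|^{-1/2}$. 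I do not anticipate a genuine obstacle: once Theorem \ref{thm-S0} is in hand, the proof is book-keeping on top of Theorems \ref{thm-S0} and \ref{lemma-P1}. The only mildly delicate point is the logarithmic factor in $\Phi_p$ at $p=d$, but the power $\eta^{d/2-1}$ coming from $\sigma_\e$ comfortably absorbs it since $d\ge 3$ there.
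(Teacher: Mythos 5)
Your proof is correct, but it follows a genuinely different route from the paper's. The paper does \emph{not} use Theorem~\ref{thm-S0} in the proof of Theorem~\ref{main-thm-3}: it instead starts from \eqref{P-2c} (derived in Remark~\ref{re-P1} from the local estimates \eqref{mainloces-4}--\eqref{mainlocesd2-4} with $\alpha=0$ plus Theorem~\ref{lemma-P1}), deduces $B_p(\Omega_{\e,\eta})\le C\Phi_p(\eta^{-1})\sigma_\e^2\e^{-1}$, checks this equals $C\e\eta^{1-d+d/p}$ directly for $d=2$ and for $d\ge 3$, $p>d$, and then fills in $d\ge 3$, $2<p\le d$ by Riesz--Thorin interpolation with the $p=2$ energy bound. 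You instead invoke \eqref{P-5a}/\eqref{P-5b} (the version of the localization with $\alpha$ chosen as a mean), apply Theorem~\ref{thm-S0} to control $\|S_\e(F,0)\|_{L^p}$ by $\sigma_\e\|F\|_{L^p}$, and apply Theorem~\ref{lemma-P1} for $\|u_\e\|_{L^p}$, which gives a direct, interpolation-free verification across all $p>2$; your algebraic identity $\e^{-1}\Psi_p\sigma_\e^2=\e\eta^{1-d+d/p}$ and the subsequent case-check that $\Phi_p(\eta^{-1})\sigma_\e\lesssim\e\eta^{1-d+d/p}$ are both correct. In effect you are adapting the strategy the paper itself uses for Theorem~\ref{main-thm-4} (where $\sigma_\e\ge 1$ and $2<p\le d$) to the $\sigma_\e\le 1$ regime. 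Your route buys a uniform treatment of all exponents at the cost of invoking the heavier Theorem~\ref{thm-S0}; the paper's route is lighter (no $S_\e$ needed) but needs the extra interpolation step. One minor point to watch: your claim that $\sigma_\e\le 1$ forces $\e\le\sigma_\e$ is exact for $d\ge 3$, but for $d=2$ it reads $|\ln\eta|\ge 1$, which fails for $\eta\in(1/e,1/2)$; this is harmless since the constants absorb any bounded range of $\eta$, but it is worth stating as ``$\e\le C\sigma_\e$'' rather than as an inequality that holds identically.
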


\begin{proof}

It follows from \eqref{P-2c} that
\begin{equation}\label{B-1a}
B_p(\Omega_{\e, \eta})
\le C \Phi_p(\eta^{-1}) \left\{ 1+ \e^{-1} \min (\sigma_\e, 1)\right\} ^2 \e.
\end{equation}
Using  $0< \sigma_\e \le 1$, we obtain 
\begin{equation}\label{B-1b}
B_p(\Omega_{\e, \eta}) \le C \Phi_p(\eta^{-1})\sigma_\e^2 \e^{-1}.
\end{equation}
In the case $d=2$, since $\Phi_p(\eta^{-1}) = \eta^{\frac{2}{p} -1} |\ln \eta|^{-1}$ and $\sigma_\e = \e  |\ln \eta|^{1/2}$, 
this gives \eqref{W1pB-01}.

In the case $d\ge 3$ and $p>d$, we have   $\sigma_\e = \e \eta^{1-\frac{d}{2}}$ and $\Phi_p(\eta^{-1})=\eta^{\frac{d}{p}-1}$.
It follows from \eqref{B-1b} that \eqref{W1pB-01} holds.
Since \eqref{W1pB-01} also holds for $p=2$,
by Riesz-Thorin Interpolation Theorem, the estimate \eqref{W1pB-01} holds for all $2< p< \infty$.
\end{proof}

Finally, we treat the case $2< p< \infty$ and $\sigma_\e \ge 1$. The estimate \eqref{W1pB-02}  below is sharp
in the case $d\ge 3$ and $2<p<d$.
See Theorem \ref{B-low-1}.

\begin{thm}\label{main-thm-4}
Suppose that $\sigma_\e \ge 1$ and $2< p< \infty$.
Let $\Omega$ be a bounded $C^1$ domain in $\R^d$ and $\Omega_{\e, \eta}$ be given by \eqref{O-e}.
 Then 
\begin{equation}\label{W1pB-02}
B_p(\Omega_{\e, \eta}) \le 
\left\{
\aligned
& C (1+ \e^{-1} \eta^{\frac{d}{p}-1}) & \quad & \text{ if }  d\ge 3 \text{ and } 2<p<d,\\
& C (\e^{-1} + |\ln \eta|^{1-\frac{1}{d}}) & \quad & \text{ if }   d\ge 3 \text{ and } p=d,\\
& C \e^{-1} \eta^{\frac{d}{p}-1} & \quad & \text{ if }  d\ge 3 \text{ and } d< p<\infty,\\
& C \e^{-1} \eta^{\frac{2}{p}-1} |\ln \eta|^{-1}
& \quad &\text{ if } d=2 \text{ and } 2< p< \infty,
\endaligned
\right.
\end{equation}
where $C$ depends only on $d$, $p$, $\Omega$, and $\{Y_z^s\}$.
\end{thm}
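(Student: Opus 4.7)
The plan is to run the proof parallel to that of Theorem \ref{main-thm-3}, but using the large-scale estimates \eqref{P-5a} and \eqref{P-5b} rather than the cruder bound \eqref{P-2c}. As in Remark \ref{re-P1} and the proofs above, it suffices to take $f=0$ and extract the coefficient of $\|F\|_{L^p(\Omega)}$. The hypothesis $\sigma_\e\ge 1$ does the essential book-keeping: it forces $\min(\sigma_\e,1)=\min(\sigma_\e^2,1)=1$, so Theorem \ref{lemma-P1} yields $\|u_\e\|_{L^p(\Omega)}\le C\|F\|_{L^p(\Omega)}$ and Theorem \ref{thm-S0} yields $\|S_\e(F,0)\|_{L^p(\R^d)}\le C\|F\|_{L^p(\Omega)}$.

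Substituting these two control estimates into \eqref{P-5a} (for $d\ge 3$) and \eqref{P-5b} (for $d=2$), and using $\e\in(0,1]$ to absorb $\Phi_p(\eta^{-1})\e\le \Phi_p(\eta^{-1})$, I obtain the master bounds
\begin{equation*}
B_p(\Omega_{\e,\eta})\ \le\ C\,\e^{-1}\eta^{\frac{d}{p}-1}+C\,\Phi_p(\eta^{-1})\qquad\text{for } d\ge 3,
\end{equation*}
and
\begin{equation*}
B_p(\Omega_{\e,\eta})\ \le\ C\,\e^{-1}\Phi_p(\eta^{-1})\qquad\text{for } d=2.
\end{equation*}

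The remaining step is to substitute the explicit form \eqref{PhiR-4} of $\Phi_p(\eta^{-1})$ in each of the four regimes and read off \eqref{W1pB-02}. For $d\ge 3$ and $2<p<d$ one has $\Phi_p(\eta^{-1})=1$, giving $C(1+\e^{-1}\eta^{d/p-1})$. For $d\ge 3$ and $p=d$, $\eta^{d/p-1}=1$ and $\Phi_p(\eta^{-1})=|\ln\eta|^{1-1/d}$, giving $C(\e^{-1}+|\ln\eta|^{1-1/d})$. For $d\ge 3$ and $p>d$, $\Phi_p(\eta^{-1})=\eta^{d/p-1}\ge 1$, so the first term dominates and we get $C\e^{-1}\eta^{d/p-1}$. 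For $d=2$, plugging $\Phi_p(\eta^{-1})=\eta^{2/p-1}|\ln\eta|^{-1}$ into the $d=2$ master bound gives the stated inequality directly.

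No new PDE input is required beyond the material already established: the substantive work is contained in the large-scale estimate of Theorem \ref{thm-S0} and the $L^p$ bound of Theorem \ref{lemma-P1}, and the present theorem is an assembly of these with the local reductions \eqref{P-5a}-\eqref{P-5b}. The only place that demands care is the final case analysis, in particular verifying that in the sub-range $d\ge 3$, $p>d$ the $\e^{-1}\eta^{d/p-1}$ term already dominates $\Phi_p(\eta^{-1})$ (because $\eta^{d/p-1}\ge 1$ and $\e\le 1$); that observation is what lets the $C\Phi_p(\eta^{-1})$ contribution be dropped in the $p>d$ case while it survives, as the $C(1+\ldots)$ constant, in the $p<d$ case.
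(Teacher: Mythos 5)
Your proof is correct and arrives at the same estimates as the paper, via essentially the same route in the harder range $d\ge 3$, $2<p\le d$. The one difference is in the remaining range $p>d$ (for $d\ge 2$): the paper disposes of this case more cheaply by invoking \eqref{B-1a} — the bound $B_p(\Omega_{\e,\eta})\le C\Phi_p(\eta^{-1})\{1+\e^{-1}\min(\sigma_\e,1)\}^2\e$ that follows from the cruder local estimate \eqref{P-2c} — which with $\sigma_\e\ge 1$ and $\e\le 1$ immediately gives $B_p\le C\e^{-1}\Phi_p(\eta^{-1})$, without needing Theorem \ref{thm-S0} at all. You instead run \eqref{P-5a}/\eqref{P-5b} together with Theorems \ref{lemma-P1} and \ref{thm-S0} uniformly across all cases, which is also valid and yields the same bounds after the case analysis; the price is that you invoke the heavy machinery of Theorem \ref{thm-S0} even where it is not needed, and the benefit is a single streamlined argument. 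One small remark on the final bookkeeping: when $d\ge 3$ and $p>d$ the reason $\e^{-1}\eta^{d/p-1}$ dominates $\Phi_p(\eta^{-1})=\eta^{d/p-1}$ is simply $\e^{-1}\ge 1$; your parenthetical $\eta^{d/p-1}\ge 1$ is true but plays no role in that comparison.
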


\begin{proof}

Using $\sigma_\e\ge 1$, we obtain from \eqref{B-1a} that 
\begin{equation}
B_p(\Omega_{\e, \eta})
\le C \Phi_p(\eta^{-1}) \e^{-1}.
\end{equation}
In view of \eqref{PhiR-4}, this gives \eqref{W1pB-02}  if $d\ge 2$ and $p>d$.
For the remaining case $d\ge 3$ and $2< p\le d$, 
 we let $u_\e\in W^{1, p}_0(\Omega_{\e, \eta})$ be a solution of
$-\Delta u_\e =F$ in $\Omega_{\e, \eta}$, where $F\in L^p(\Omega)$.
We extend $u_\e $ to $\R^d$ by zero.
It follows from \eqref{P-5a} and Theorem \ref{thm-S0} that 
\begin{equation}
\aligned
\|\nabla u_\e\|_{L^p(\Omega)}
 & \le C \e^{-1} \eta^{\frac{d}{p}-1} \| u_\e \|_{L^p(\Omega)}
+ C \Phi_p(\eta^{-1})  \| F \|_{L^p(\Omega)}\\
& \le  C\left(  \e^{-1}  \eta^{\frac{d}{p}-1} 
+\Phi_p(\eta^{-1} )  \right) \| F \|_{L^p(\Omega)},
\endaligned
\end{equation}
where we have used Theorem \ref{lemma-P1} for the last inequality.
In view of \eqref{PhiR-4} we obtain \eqref{W1pB-02} for $d\ge 3$ and $2< p\le d$.
\end{proof}


\section{Correctors: the case $d\ge 3$ }\label{section-C1}

This section is dedicated to constructing and establishing estimates for the corrector $\chi_{\e, \eta}$
in the case $d\ge 3$. 
The two-dimensional case will be handled separately  in the next section.

Let $\Omega_{\e, \eta}$ be given by \eqref{O-e}, where $T_{z, \eta} $  be given by \eqref{T}.
The corrector $\chi_{\e, \eta}$ for $\Omega_{\e, \eta} $ is defined in a piecewise manner, following an idea in \cite{allaire1991homogenization}.
 For $z\in \mathbb{Z}^d$, let $y_z=z+x_z$ and $Q_z = z+Q(0, 1)$.
If $\e Q_z \subset \Omega$,
we define 
\begin{equation}\label{chi-2}
\chi_{\e, \eta} (x) = \begin{cases}
1 & \text{in} \ \ \e Q_z  \setminus  \e B(y_z, 1/5), \\
\phi^z_*(\frac{x-\e  y_z}{\e \eta} ) & \text{in} \ \ \e B(y_z, 1/6 )   \setminus  \e T_{z, \eta}, \\
 0 & \text{in} \ \  \e T_{z, \eta},
\end{cases}
\end{equation}
where $\phi^z_* $ is a solution to the  exterior problem,
\begin{equation}\label{ext-2}
\begin{cases}
\begin{alignedat}{2}
-\Delta \phi^z_* &= 0 \qquad &&\text{in} \ \ \mathbb{R}^d \setminus \overline{Y^s_z}, \\
\phi^z_* &= 0 \qquad &&\text{on} \ \ \partial Y^s_z, \\
\phi^z_* &\rightarrow 1 \qquad &&\text{as} \ \ |x| \rightarrow \infty.
\end{alignedat}
\end{cases}
\end{equation}
In the region $\e B(y_z, 1/5) \setminus  \e B(y_z,1/6)$ we let $\chi_{\e, \eta} $ solve the following Dirichlet problem,
\begin{equation}\label{gap}
\begin{cases}
\begin{alignedat}{2}
-\Delta \chi_{\e, \eta}  &= 0 \qquad  &&\text{in} \ \ \e B(y_z, 1/5) \setminus \e  \overline{B(y_z, 1/6 )},  \\
\chi_{\e, \eta}  &= \phi_*^z \Big(\frac{x-\e y_z}{\e \eta}\Big) \qquad  &&\text{on} \ \ \partial  B(\e y_z,
\e /6),  \\
\chi_{\e, \eta}  &= 1 \qquad  &&\text{on} \ \ \partial   B(\e y_z,\e/5). 
\end{alignedat}
\end{cases}
\end{equation}
Finally, if $\e Q_z$ does not lie entirely in $\Omega$, we define $\chi_{\e, \eta}  =1$ in $\e Q_z$.
As a result, the corrector $\chi_{\e, \eta} $ is defined in $\mathbb{R}^d$, taking value $1$ in $\Omega^c$ and $0$ in the holes $\e T_{z, \eta}$
 inside $\Omega$.
The definition in \eqref{gap}  bridges the gap between the boundary data of the exterior problem and 1. Thus, we see that 
 $\chi_{\e, \eta}  \in H^1(\Omega)$, 
 $\chi_{\e, \eta}=1$ on $\partial \Omega$ and
 $\chi_{\e, \eta}=0$ on $\partial \Omega_{\e, \eta}\setminus \partial \Omega$.
 
It is known that for each $\phi_*^z$ that satisfies \eqref{ext-2}, 
\begin{equation}\label{est-2}
\begin{cases}
\begin{alignedat}{1}
\phi^z_*({x}) &= 1 -c^z_* |x|^{2-d} + O(|x|^{1-d}), \\
\nabla \phi^z_*({x}) &= - c^z_*\nabla (|x|^{2-d})+ O(|x|^{-d}),\\
\nabla^2 \phi^z_*({x}) &= - c^z_*\nabla^2 (|x|^{2-d})+ O(|x|^{-d-1}),
\end{alignedat}
\end{cases}
\end{equation}
as $|x| \to \infty$, 
where $c^z_* = C_d\mu^z_* $ is the Newtonian capacity of $Y_z^s$
with 
\begin{equation}\label{mu}
\mu^z_* = \int_{\partial Y^s_z} n \cdot \nabla \phi^z_* \, d \sigma
\end{equation}
and $C_d =  \frac{1}{(d-2)|\partial B(0,1)|}$. See \cite{amrouche1997dirichlet, verchota1984layer}.
The condition  \eqref{c-0} on $Y_z^s$ ensures that there exist $\mu_0, \mu_1>0$ such that 
$\mu_0\le \mu_*^z \le \mu_1$ for any $z\in \mathbb{Z}^d$.

\begin{lemma}\label{LP-2} 
Let $\chi_{\e, \eta} $ be defined as above. Then 
\begin{equation}
\| \chi_{\e, \eta}  -1\| _{L^p(\Omega_{\e, \eta})} \leq
\begin{cases}
    C\eta^{d-2} & \text{for } \ \ 1 \leq p <\frac{d}{d-2}, \\
    C\eta^{d-2}|\ln \eta|^{d-2} & \text{for} \ \ p =\frac{d}{d-2}, \\
    C\eta^{\frac{d}{p}} & \text{for} \ \ p > \frac{d}{d-2},
\end{cases}
\end{equation}
where $C$ does not depend on $\e$ or $\eta$.
\end{lemma}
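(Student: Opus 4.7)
The plan is to bound the $L^p$ norm cell by cell. Only those cells with $\e Q_z \subset \Omega$ contribute, since $\chi_{\e,\eta} \equiv 1$ on every other cell. On each such contributing cell the function $\chi_{\e,\eta} - 1$ is supported in $\e B(y_z, 1/5) \setminus \e T_{z,\eta}$, and I will split this into the outer annulus $A_z = \e B(y_z, 1/5) \setminus \e \overline{B(y_z, 1/6)}$, where $\chi_{\e,\eta}$ solves the Dirichlet problem \eqref{gap}, and the inner region $I_z = \e B(y_z, 1/6) \setminus \e T_{z,\eta}$, where $\chi_{\e,\eta}(x) = \phi_*^z((x-\e y_z)/(\e\eta))$.

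On $I_z$ the key tool will be the asymptotic \eqref{est-2}. Since $1-\phi_*^z$ is a bounded harmonic function in $\R^d\setminus\overline{Y_z^s}$ that equals $1$ on $\partial Y_z^s\subset B(0,1/8)$ and vanishes at infinity, comparison with a capacitary potential yields the uniform pointwise bound $|1-\phi_*^z(y)| \le C(1+|y|)^{2-d}$. Substituting $y=(x-\e y_z)/(\e\eta)$ and using $|x-\e y_z|\ge c_0\e\eta$ on $I_z$, I arrive at
\[
|1-\chi_{\e,\eta}(x)|\le C(\e\eta)^{d-2}|x-\e y_z|^{2-d}\qquad\text{on }I_z,
\]
which in polar coordinates gives
\[
\int_{I_z}|1-\chi_{\e,\eta}|^p\,dx\le C(\e\eta)^{(d-2)p}\int_{c_0\e\eta}^{\e/6}r^{(2-d)p+d-1}\,dr.
\]
The radial integral splits into three regimes according to the sign of $(2-d)p+d$, matching the three cases in the statement. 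On $A_z$ the maximum principle applied to \eqref{gap} bounds $|1-\chi_{\e,\eta}|$ by its boundary values: these vanish on the outer sphere and equal $1-\phi_*^z$ evaluated at a point of norm $1/(6\eta)$ on the inner sphere, which by \eqref{est-2} is $O(\eta^{d-2})$; hence $\int_{A_z}|1-\chi_{\e,\eta}|^p\le C\e^d\eta^{(d-2)p}$.

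Finally I will sum over the $O(|\Omega|\e^{-d})$ contributing cells. For $p<d/(d-2)$ the radial integral is $O(\e^{(2-d)p+d})$, so both $A_z$ and $I_z$ contribute at the scale $\eta^{(d-2)p}$, giving $\|1-\chi_{\e,\eta}\|_{L^p}\le C\eta^{d-2}$. For $p=d/(d-2)$ the radial integral picks up a logarithm and the inner region dominates, yielding $\|1-\chi_{\e,\eta}\|_{L^p}^p\le C\eta^d|\ln\eta|$; this is in fact stronger than the stated bound $C\eta^{d-2}|\ln\eta|^{d-2}$ since $(d-2)/d\le d-2$. For $p>d/(d-2)$ the radial integral is controlled by its behaviour at the lower endpoint $r=c_0\e\eta$ and contributes $O(\eta^d)$ per cell, dominating the annular contribution $\eta^{(d-2)p}$, and yielding $\|1-\chi_{\e,\eta}\|_{L^p}\le C\eta^{d/p}$. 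The main obstacle is careful bookkeeping of the three endpoint regimes and verifying the uniform capacitary estimate for $1-\phi_*^z$ independently of $z$; no new conceptual difficulty arises once the $A_z\cup I_z$ decomposition is in place.
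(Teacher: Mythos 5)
Your proof is correct and follows essentially the same approach as the paper's: decompose each cell's contribution into the outer annulus (handled by the maximum principle, giving $|1-\chi_{\e,\eta}|\le C\eta^{d-2}$) and the inner region $\e B(y_z,1/6)\setminus\e T_{z,\eta}$ (handled via the decay of $1-\phi_*^z$), compute the radial integral in each of the three regimes at $p=d/(d-2)$, and sum over $O(\e^{-d})$ cells. The only cosmetic differences are that you work directly in the $x$ variable rather than rescaling to the cell problem as the paper does, and that you supply an explicit capacitary-comparison argument for the uniform pointwise bound $|1-\phi_*^z(y)|\le C(1+|y|)^{2-d}$ rather than simply quoting \eqref{est-2}. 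Your observation that the case $p=d/(d-2)$ actually yields $|\ln\eta|^{(d-2)/d}$, which is sharper than the stated $|\ln\eta|^{d-2}$, also matches the paper's own computation (which gives $\|\chi_{\e,\eta}-1\|_{L^p}^p\le C\eta^d|\ln\eta|$), so the lemma statement is slightly slack on the logarithmic exponent.
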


\begin{proof} 
Suppose $\e Q_z\subset \Omega$.
Note that $\chi_{\e, \eta}  -1=0$  in  $\e Q_z\setminus B(\e y_z, \e/5) $ 
and is  harmonic in  $ B(\e y_z,\e/{5}) \setminus \overline{ B(\e y_z, \e/{6})} $. Hence,  by the maximum principle,
\begin{equation*}
    \begin{aligned}
\| \chi_{\e, \eta}  -1 \| _{L^\infty(B(\e y_z,\e/{5}) \backslash B(\e y_z, \e/{6}))} 
&\leq \| \chi_{\e, \eta}  -1 \| _{L^\infty(\partial B(\e y_z, \e/{6}))}.
 \end{aligned}
\end{equation*}
Since $\chi_{\e, \eta}  (x) =\phi_*^z ((x-\e y_z)/(\e \eta))$ on $\partial B(\e y_z, \e/6)$,  it follows from \eqref{est-2} that 
$$
\| \chi_{\e, \eta}  -1 \|_{L^\infty(B(\e y_z,\e/{5}) \backslash B(\e y_z, \e/{6}))} 
\le C \eta^{d-2}.
$$
Next, we note that 
\begin{alignat*}{1}
    \int_{B(\e y_z,\frac{\e }{6})\backslash \e T_{z, \eta} } |\chi_{\e, \eta}  -1|^p dx 
    &=\int_{B(\e y_z, \frac{\e}{6})\backslash \e T_{z, \eta} } |\phi^z_*\Big(\frac{x-\e y_z}{\e \eta }\Big) -1|^p \, dx \\ 
    &= \e^d \eta^{d } \int_{B(0,\frac{1}{6\eta})\backslash  Y^s_z} |\phi^z_*(y) -1|^p\,  dy.
    \end{alignat*}
   In view of  \eqref{est-2} we obtain 
    \begin{alignat*}{1} \int_{B(\e y_z,\frac{\e}{6})\backslash \e  T_{z, \eta} } |\chi_{\e, \eta}  -1|^p \, dx
    &\leq C  \e^d \eta^{d} \int_{B(0,\frac{1}{6\eta})\backslash  Y^s_z} \frac{dy }{|y|^{(d-2)p}}  \\ 
    &\leq C \e^d \eta^{d}  \int_c^\frac{1}{6\eta} r^{d-1-dp+2p}\, dr.
    \end{alignat*}
Thus, 
\begin{equation}
    \| \chi_{\e, \eta} -1\|^p _{L^p(\e Q_z\setminus \e T_{z, \eta} )} \leq \begin{cases}
         C\e^d\eta^{p(d-2)} & \text{for } \ \ 1 \leq p <\frac{d}{d-2}, \\
    C\e^d\eta^d|\ln \eta| & \text{for} \ \ p =\frac{d}{d-2}, \\
    C\e^d \eta^d & \text{for} \ \ p > \frac{d}{d-2}.
    \end{cases}
\end{equation}
Summing over all cubes  $\e Q_z$ in $\Omega$ gives the  desired result, as the number of such cubes is bounded by $C\e^{-d}$.
\end{proof}

\begin{remark}\label{harbdd-2} 
Suppose $\e Q_z \subset \Omega$. Consider $\xi (x) = \chi_{\e, \eta}  (\e x + \e y_z)-1 $. 
The function $\xi$  is harmonic in $B(0,1/5) \setminus \overline{ B(0,1 /6)}$ 
with $\xi = 0 $ on $\partial B(0,1/5) $ and $\xi = \phi^z_* (\eta^{-1} x )-1 $ on $\partial B(0,1 /6)$. 
It follows that in $B(0, 1/5)\setminus B(0, 1/6)$, 
 $$
  |\nabla \xi| \leq C \| \phi_*^z (\eta^{-1} x) -1 \|_{C^{1, 1} (\partial B(0, 1/6))}
   \leq C\eta^{d-2},
   $$  
   where  we have used \eqref{est-2} for the last inequality.
   This implies that 
$$
|\nabla \chi_{\e, \eta}  | \leq C \e^{-1} \eta^{d-2}
$$
  in  $B(\e y_z,\e/3)\backslash B(\e y_z,\e/4) $. 
\end{remark}

\begin{lemma}\label{LPGRAD-2}
  Suppose $\e Q_z \subset \Omega$. Then 
      \begin{equation} \label{est-ch-2}   
    \left( \fint_{\e Q_z}|\nabla \chi_{\e, \eta}  |^p \right)^{1/p} \leq \begin{cases}
C \e^{-1} \eta^{\frac{d-p}{p}} & \text{if } p > d', \\
C \e^{-1} \eta^{d-2}|\ln \eta|^{\frac{1}{p}} & \text{if } p =d',  \\
C\e^{-1} \eta^{d-2} & \text{if } p < d',  
    \end{cases}
    \end{equation}
    where $d' = \frac{d}{d-1}$ and the constant $C$ does not depend on $\e$ or $\eta$.
 \end{lemma}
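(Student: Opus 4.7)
The plan is to exploit the piecewise definition of $\chi_{\e,\eta}$ on $\e Q_z$ and estimate $\nabla \chi_{\e,\eta}$ on each piece separately. Observe that $\nabla \chi_{\e,\eta} \equiv 0$ on $\e Q_z \setminus \e B(y_z, 1/5)$ and on the hole $\e T_{z,\eta}$, so only two regions carry mass: the exterior-solution region $\e B(y_z, 1/6)\setminus \e T_{z,\eta}$, where $\chi_{\e,\eta}(x)=\phi_*^z((x-\e y_z)/(\e\eta))$, and the gap annulus $\e B(y_z,1/5)\setminus \e B(y_z,1/6)$, where the bound $|\nabla \chi_{\e,\eta}| \le C\e^{-1}\eta^{d-2}$ from Remark \ref{harbdd-2} applies.

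In the exterior region, the chain rule gives $\nabla \chi_{\e,\eta}(x) = (\e\eta)^{-1} (\nabla \phi_*^z)((x-\e y_z)/(\e\eta))$, so the change of variables $y = (x-\e y_z)/(\e\eta)$ yields
\begin{equation*}
\int_{\e B(y_z,1/6)\setminus \e T_{z,\eta}} |\nabla \chi_{\e,\eta}|^p\,dx
= \e^{d-p}\eta^{d-p} \int_{B(0,1/(6\eta))\setminus Y_z^s} |\nabla \phi_*^z(y)|^p\,dy.
\end{equation*}
Using the decay estimate \eqref{est-2}, which gives $|\nabla \phi_*^z(y)| \le C|y|^{1-d}$ for large $|y|$, together with boundedness of $|\nabla \phi_*^z|$ near $\partial Y_z^s$ (by standard elliptic regularity for the exterior problem), I would bound the inner integral by $C + C\int_c^{1/(6\eta)} r^{d-1-(d-1)p}\,dr$. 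The resulting integral splits into three cases according to the sign of $d-(d-1)p$, giving $O(\eta^{(d-1)p-d})$ when $p<d'$, $O(|\ln\eta|)$ when $p=d'$, and $O(1)$ when $p>d'$.

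For the gap annulus, of volume $\sim \e^d$, the uniform gradient bound from Remark \ref{harbdd-2} yields a contribution of order $\e^{d-p}\eta^{p(d-2)}$. Adding this to the exterior estimate, dividing by $|\e Q_z|=\e^d$, and taking $p$-th roots produces, in each case, the bounds stated in \eqref{est-ch-2}. The main step requiring care is checking that the gap contribution is absorbed by the exterior contribution in the appropriate regime: specifically, the inequality $p(d-2) \ge d-p$ is equivalent to $p \ge d'$, so for $p<d'$ the gap term dominates (giving $\e^{-1}\eta^{d-2}$), while for $p\ge d'$ the exterior term dominates. At the critical exponent $p=d'$ the two exponents coincide, and the logarithmic factor from the exterior integral produces the $|\ln\eta|^{1/p}$ in the middle case of \eqref{est-ch-2}. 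I do not expect any serious obstacle here beyond the bookkeeping of scalings and careful verification of which exponent wins in each regime.
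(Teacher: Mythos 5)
Your proposal is correct and follows essentially the same route as the paper: decompose $\int_{\e Q_z}|\nabla\chi_{\e,\eta}|^p$ into the gap annulus (where Remark \ref{harbdd-2} gives the uniform bound $|\nabla\chi_{\e,\eta}|\le C\e^{-1}\eta^{d-2}$) and the exterior region (rescale, use the decay of $\nabla\phi_*^z$, and integrate in $r$), then compare exponents. Two small points of precision. First, for a domain with merely $C^1$ boundary you cannot invoke ``boundedness of $|\nabla\phi_*^z|$ near $\partial Y_z^s$''; that would require $C^{1,\alpha}$ regularity. What actually holds, and what the paper uses, is that $\nabla\phi_*^z \in L^p$ in a neighborhood of $\partial Y_z^s$ for every finite $p$, which is all the argument needs. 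Second, your statement that ``for $p<d'$ the gap term dominates'' is slightly off: in that regime the exterior contribution and the gap contribution are both of the same order $\e^{d-p}\eta^{p(d-2)}$, so neither dominates, but the conclusion $\e^{-1}\eta^{d-2}$ is unaffected. Neither issue changes the outcome; the bookkeeping and the final bounds in all three cases are right.
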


 \begin{proof}
     We start by decomposing the integral along the sub-regions within the cube to obtain
 \begin{equation}\label{decomp-2}
       \int_{\e Q_z}|\nabla \chi_{\e, \eta}  |^p 
       =    \int_{B(\e y_z,\frac{\e}{5})\setminus B(\e y_z,\frac{\e}{6})}|\nabla \chi_{\e, \eta} |^p +
        \int_{B(\e y_z, \frac{\e}{6}) \setminus \e T_{z, \eta}  }|\nabla \chi_{\e, \eta} |^p. 
 \end{equation}
  By Remark \ref{harbdd-2}, 
  the first term in the  right-hand side of  \eqref{decomp-2}  is bounded by  $C \e^d (\e^{-1} \eta^{d-2})^p$.
For the second term, we have 
\begin{equation}\label{1stlemest-2}
\begin{alignedat}{1}
     \int_{B(\e y_z, \frac{\e}{6}) \backslash \e  T_{z, \eta}  }|\nabla \chi_{\e, \eta}  |^p 
      &= \int_{B(\e y_z, \frac{\e}{6}) \backslash \e  T_{z, \eta}  }|\nabla (\phi_*^z\Big(\frac{x- \e y_z}{\e \eta}\Big)) |^p\,  dx  \\ 
      &=  (\e \eta )^{-p} 
      \int_{B(\e y_z, \frac{\e }{6}) \backslash \e   T_{z, \eta}  }|\nabla \phi_*^z(\frac{x-\e y_z}{\epsilon\eta})|^p \, dx
       \\ &= (\e \eta)^{d-p}   \int_{B(0, \frac{1}{6\eta}) \backslash  Y_z^s }|\nabla \phi_*^z(y)|^p \, dy\\
       &\le C  (\e \eta)^{d-p}
       \int_c^{\frac{1}{6\eta}}  r^{ (1-d) p + d-1} dr,
\end{alignedat}
\end{equation}
where we have used \eqref{est-2} as well as the fact that  $\nabla \phi_*^z$ is $L^p$ integrable near $\partial Y_z^s$ under the
condition that $Y_z^s$ is uniformly  $C^1$.
It follows that 
\begin{equation}\label{scdtrm-2}
    \left( \fint_{ \e B(y_z, 1/6)}|\nabla \chi_{\e, \eta}  |^p \right)^{1/p} \leq
    \begin{cases}
    C\e^{-1} \eta^{\frac{d-p}{p}} & \text{if } p > d' ,\\ 
    C \e^{-1} \eta^{\frac{d-p}{p}} | \ln \eta|^{\frac{1}{p} } & \text{if } p = d' , \\ 
    C \e^{-1} \eta^{d-2} & \text{if } p < d'.
    \end{cases}
\end{equation}
Combining  \eqref{scdtrm-2} with  the estimate for $B(\e y_z, \e/5) \setminus B(\e y_z, \e/6)$
gives \eqref{est-ch-2}. 
\end{proof}

Recall that $\sigma_\e = \e \eta^{-\frac{d-2}{2}}$ for $d\ge 3$ and $\mu_*^z$ is given by \eqref{mu}.

\begin{lemma}\label{lemma-3c}
Suppose $\e Q_z \subset \Omega$.
Then
\begin{equation}\label{est-3c}
\left |\int_{\e Q_z} \nabla \chi_{\e, \eta} \cdot \nabla \phi
- \int_{\e Q_z} \sigma_\e^{-2} \mu_*^z \phi \right|
\le C \e^{-1} \eta^{d-2} 
\int_{\e Q_z} |\nabla \phi|,
\end{equation}
where $\phi \in H^1(\e Q_z)$ and $\phi=0$ in $\e T_{z, \eta}$.
\end{lemma}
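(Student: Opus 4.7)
The plan is to reorganize the integral $\int_{\e Q_z}\nabla\chi_{\e,\eta}\cdot\nabla\phi$ via integration by parts on each piece of the piecewise definition \eqref{chi-2}--\eqref{gap}, and then extract the ``strange-term'' main contribution. Since $\chi_{\e,\eta}=1$ is constant on $\e Q_z\setminus \e B(y_z,1/5)$ and $\chi_{\e,\eta}=0$ (hence $\nabla\chi_{\e,\eta}=0$ a.e.) on $\e T_{z,\eta}$, only the annular piece (where I will write $\chi^{\text{ann}}=\chi_{\e,\eta}$, harmonic in $A_{12}=\e B(y_z,1/5)\setminus \e B(y_z,1/6)$) and the inner piece (where $\chi^{\text{in}}(x)=\phi_*^z((x-\e y_z)/(\e\eta))$, harmonic in $A_1 = \e B(y_z,1/6)\setminus \e T_{z,\eta}$) contribute. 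Because $\phi=0$ on $\partial(\e T_{z,\eta})$, integration by parts on $A_{12}$ and $A_1$ gives
\[
 \int_{\e Q_z}\nabla\chi_{\e,\eta}\cdot\nabla\phi
= \int_{S_5}(\partial_r\chi^{\text{ann}})\phi\,d\sigma
+ \int_{S_6}\big[\partial_r\chi^{\text{in}}-\partial_r\chi^{\text{ann}}\big]\phi\,d\sigma,
\]
where $S_r=\partial B(\e y_z,\e r)$ and $\partial_r$ is the radial derivative.

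Next I would set $\alpha=\fint_{\e Q_z}\phi$ and split $\phi=\alpha+(\phi-\alpha)$. For the constant part, the divergence theorem applied to $\chi^{\text{ann}}$ (harmonic on the annulus) yields $\int_{S_5}\partial_r\chi^{\text{ann}}=\int_{S_6}\partial_r\chi^{\text{ann}}$, so those two terms cancel. The remaining main term is
\[
\alpha\int_{S_6}\partial_r\chi^{\text{in}}\,d\sigma
= \alpha\,(\e\eta)^{d-2}\!\!\int_{\partial B(0,1/(6\eta))}\!\!\partial_r\phi_*^z\,d\sigma
= \alpha\,\e^{d-2}\eta^{d-2}\mu_*^z,
\]
after rescaling and using the identity $\int_{\partial B(0,R)}\partial_r\phi_*^z = \mu_*^z$ for any $R$ exterior to $Y_z^s$ (by \eqref{mu} and Green's theorem). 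Since $\alpha|\e Q_z|=\int_{\e Q_z}\phi$ and $\sigma_\e^{-2}=\e^{-2}\eta^{d-2}$, this precisely equals $\int_{\e Q_z}\sigma_\e^{-2}\mu_*^z\phi$, so the choice $\alpha=\fint\phi$ matches the main term exactly.

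It remains to bound the residual involving $(\phi-\alpha)$ by $C\e^{-1}\eta^{d-2}\int_{\e Q_z}|\nabla\phi|$. For this I need the pointwise estimates $|\partial_r\chi^{\text{in}}|, |\partial_r\chi^{\text{ann}}|\le C\e^{-1}\eta^{d-2}$ on $S_5\cup S_6$. The bound on $\chi^{\text{in}}$ follows directly from \eqref{est-2}, since $(\e\eta)^{-1}|\nabla\phi_*^z(y)|\le C\e^{-1}\eta^{d-2}$ at $|y|=1/(6\eta)$. For $\chi^{\text{ann}}$, the function $\chi^{\text{ann}}-1$ is harmonic in the annulus with boundary data $\phi_*^z(\cdot/\eta)-1$ of size $O(\eta^{d-2})$ on $S_6$ (by \eqref{est-2}) and zero on $S_5$; rescaling the annulus to unit size and applying the standard $C^{1,\alpha}$ boundary estimate for harmonic functions then gives $|\nabla\chi^{\text{ann}}|\le C\e^{-1}\eta^{d-2}$ (this is essentially Remark \ref{harbdd-2}).

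Finally, I would combine these pointwise bounds with the trace inequality
\[
\|\phi-\alpha\|_{L^1(S_r)}\le C\bigl[\e^{-1}\|\phi-\alpha\|_{L^1(\e Q_z)}+\|\nabla\phi\|_{L^1(\e Q_z)}\bigr]
\]
and the Poincar\'e--Wirtinger inequality $\|\phi-\alpha\|_{L^1(\e Q_z)}\le C\e\|\nabla\phi\|_{L^1(\e Q_z)}$, yielding $\|\phi-\alpha\|_{L^1(S_r)}\le C\|\nabla\phi\|_{L^1(\e Q_z)}$ for $r\in\{1/5,1/6\}$; multiplying by the $L^\infty$ bound on the normal derivatives gives the desired estimate \eqref{est-3c}. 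The main obstacle is the uniform $C^1$ control on $\chi^{\text{ann}}$ up to the endpoints of the annulus (needed for both the trace integral on $S_5$ and the pointwise bound), which reduces to a scale-invariant harmonic estimate on a fixed $C^{1,\alpha}$ annulus and does not depend on the position $x_z$ of the hole.
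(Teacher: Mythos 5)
Your proposal is correct, and it follows the same basic strategy as the paper's proof: isolate the annular and inner pieces, integrate by parts to move everything onto sphere integrals, use the flux identity $\int_{\partial B(0,R)}\partial_r\phi_*^z=\mu_*^z$ to identify the main term, and bound the residual via the $L^\infty$ gradient estimate from \eqref{est-2} and Remark \ref{harbdd-2} together with the trace and Poincar\'e inequalities of Remark \ref{re-trace}. Where you diverge slightly is in the bookkeeping: the paper leaves the annular contribution $\int_{\e Q_z\setminus B(\e y_z,\e/6)}\nabla\chi_{\e,\eta}\cdot\nabla\phi$ in volume form and bounds it directly by $C\e^{-1}\eta^{d-2}\int|\nabla\phi|$, does integration by parts only on the inner ball, and then needs two separate steps, first subtracting $\alpha=\fint_{B(\e y_z,\e/6)}\phi$ to control the oscillating part of the normal derivative and then \eqref{trace-2} to compare $\fint_{\partial B(\e y_z,\e/6)}\phi$ with $\fint_{\e Q_z}\phi$. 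You instead integrate by parts on both pieces, observe that the divergence theorem kills the constant contribution from the two $\chi^{\text{ann}}$ flux integrals, and pick the single $\alpha=\fint_{\e Q_z}\phi$ so the main term cancels exactly against $\int\sigma_\e^{-2}\mu_*^z\phi$. This makes the structure slightly cleaner (one $\alpha$, no separate average-comparison estimate), at the cost of having to invoke the trace bound on both $S_5$ and $S_6$ and correctly tracking the one-sided normal derivatives of $\chi_{\e,\eta}$ across $S_6$, where it is continuous but its gradient need not be; your displayed boundary formula handles this correctly. Both routes draw on the same ingredients and give the same constant dependence, so the two proofs are best viewed as minor variants of one another.
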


\begin{proof}
Note that 
\begin{equation}\label{ints-4}
\aligned
&   \int_{\e Q_z} \nabla \chi_{\e, \eta}  \cdot \nabla \phi - \int_{\e Q_z} \sigma_\e^{-2} \mu_*^z  \phi\\
 & = \int_{\e Q_z \setminus B(\e y_z, \e/6)}\nabla \chi_{\e, \eta}  \cdot \nabla \phi 
+ \int_{B(\e y_z,\e /6)\setminus \e T_{z, \eta} }\nabla \chi_{\e, \eta}  \cdot \nabla \phi 
-  \int_{\e Q_z} \sigma_\e^{-2}  \mu_*^z   \phi.
\endaligned
\end{equation}
By Remark \ref{harbdd-2}, 
\begin{equation}\label{FT-2}
\left|\int_{\e Q_z \setminus  B(\e y_z, \e/6)}\nabla \chi_{\e, \eta}  \cdot \nabla \phi \right| \leq C \e^{-1} \eta^{d-2}\int_{\e Q_z}  |\nabla \phi| .
\end{equation}
For the remaining two terms in the right-hand side of  \eqref{ints-4}, 
we use  integration  by parts to get 
\begin{equation}\label{secterm-2}
\begin{alignedat}{1}
\int_{\partial B(\e y_z,\e/6)} \left(\frac{\partial \chi_{\e, \eta} } {\partial n } - \fint_{\partial B(\e y_z,\e/6)}\frac{\partial \chi_{\e, \eta}}{\partial n }\right)
\cdot \left(\phi - \alpha\right) 
&+ \fint_{\partial B(\e y_z,\e/6)}\frac{\partial \chi_{\e, \eta} }{\partial n } \int_{\partial B(\e y_z,\e/6)}\phi \\ 
&- \int_{\e Q_z} \sigma_\e^{-2} \mu_*^z \phi, 
\end{alignedat}
\end{equation}
where $n$ denotes the outward unit normal  and
$\alpha$ is a constant to be determined.
Here we also use the fact that $\chi_{\e, \eta}$ is harmonic in $B(\e y_z, \e/6)\setminus \e T_{z, \eta}$.
 Using \eqref{est-2}, we obtain 
\begin{equation}\label{3-11}
\begin{alignedat}{1}
 & \left| \int_{\partial B(\e y_z,\e/6)} \left(\frac{\partial \chi_{\e, \eta} }{\partial n} 
 - \fint_{\partial B(\e y_z,\e/6)}\frac{\partial \chi_{\e, \eta} }{\partial n }\right)\cdot \left(\phi - \alpha\right)\right| \\ 
 &\leq 
C \e^{-1} \eta^{d-2}
 \int_{\partial B(\e y_z,\e/6)}|\phi -\alpha| \\  
 &\leq C\e ^{-1} \eta^{d-2} \left(\frac{1}{\e} \int_{B(\e y_z,\e/6)} |\phi - \alpha|  + \int_{B(\e y_z,\e/6)} |\nabla \phi|\right),
\end{alignedat}
\end{equation}
where we have used a trace inequality for the last step (see Remark \ref{re-trace}).
Picking $\alpha = \fint_{B(\e y_z,\epsilon/6)}\phi$ and applying the Poincar\'e inequality 
gives the desired bound. 

To bound 
\begin{equation}\label{N-2}
\left| \fint_{\partial B(\e y_z,\e/6)}\frac{\partial \chi_{\e, \eta} }{\partial n } \int_{\partial B(\e y_z,\e/6)}\phi- \int_{\e Q_z} \sigma_\e^{-2} \mu_*^z  \phi \right|, 
\end{equation} 
 we move the average onto the integral of $\phi$ and note that 
\begin{equation*}
\begin{alignedat}{1}
 \int_{\partial B(\e y_z,\e/6) } \frac{\partial \chi_{\e, \eta} }{\partial n } &= (\e \eta)^{-1}
   \int_{\partial B(\e y_z,\e/6) } \frac{\partial \phi^z_*}{\partial n }\Big (\frac{x-\e y_z}{\e \eta}\Big) d\sigma(x)\\ 
   &= (\e \eta)^{d-2}\int_{\partial B(0,\frac{1}{6\eta}) } \frac{\partial \phi^z_*}{\partial  n } d\sigma(y) \\ 
 &= (\e \eta)^{d-2} \int_{\partial Y^s_z} \frac{\partial \phi^z_*}{\partial n }d\sigma(y)
 \\ &=\mu_*^z(\e \eta)^{d-2} .
 \end{alignedat}
 \end{equation*}
As a result, \eqref{N-2} becomes 
$$
 \mu_*^z (\e \eta)^{d-2} \left | \fint_{\partial B(\e y_z,\e/4)}\phi - \fint_{\e Q_z}   \phi \right|, 
 $$
 which, in view of \eqref{trace-2},  is bounded by 
 $$ 
 C \e^{-1}\eta^{d-2} \int_{\e Q_z} |\nabla \phi|.
 $$
This completes the proof.
\end{proof}

\begin{remark}\label{re-trace}
In \eqref{3-11} we have used the trace inequality,
\begin{equation}\label{trace}
\int_{\partial B(x_0,  r)} 
|\phi |
\le \frac{d}{r} \int_{B(x_0, r)} |\phi |
+  \int_{B(x_0, r)} |\nabla \phi|,
\end{equation}
for $\phi \in H^1(B(x_0, r))$.
This follows by writing 
$$
\int_{\partial B(x_0, r)} |\phi | =\frac{1}{r} \int_{\partial B(x_0, r)} |\phi| ((x-x_0)\cdot n) \, d\sigma(x) 
$$
and applying the divergence theorem.
Replacing $\phi$ in \eqref{trace} by $\phi -\alpha$, where $\alpha =\fint_{\e Q_z} \phi$,  we obtain 
\begin{equation}\label{trace-1}
\int_{\partial B(\e y_z ,  c\e )} |\phi -\fint _{\e Q_z} \phi |
\le C \int_{\e Q_z } |\nabla \phi|,
\end{equation}
where we have used a Poincar\'e  inequality,
\begin{equation}\label{P-1}
\int_{\e Q_z} | \phi -\fint_{\e Q_z} \phi |
\le C \e  \int_{\e Q_z} |\nabla \phi|.
\end{equation}
It follows from \eqref{trace-1} that
\begin{equation}\label{trace-2}
\left|
\fint_{\partial B(\e y_z, c\e)} \phi 
-\fint_{\e Q_z} \phi \right|
\le C \e \fint_{\e Q_z} |\nabla \phi|.
\end{equation}
\end{remark}


\section{Correctors: the two-dimensional case}\label{section-C2}

In this section we introduce a corrector $\chi_{\e, \eta}$ for the case $d=2$ and establish its estimates analogous to those in Lemmas
\ref{LP-2}, \ref{LPGRAD-2} and \ref{lemma-3c}.

For $z\in \mathbb{Z}^2$, let  $\phi^z_* $ be  a  solution to the  exterior problem,
\begin{equation}\label{ext-2a}
\begin{cases}
\begin{alignedat}{2}
-\Delta \phi^z_* &= 0 \qquad &&\text{in} \ \ \mathbb{R}^2 \setminus \overline{Y^s_z}, \\
\phi^z_* &= 0 \qquad &&\text{on} \ \ \partial Y^s_z, \\
\phi^z_*(x)  - \ln |x|   & = O(1) \qquad &&\text{as} \ \ |x| \rightarrow \infty.
\end{alignedat}
\end{cases}
\end{equation}
It is known that 
\begin{equation}\label{asy-2}
\left\{
\aligned
\nabla \phi_*^z (x) & = \frac{x}{ |x|^2} + O(|x|^{-2}),\\
\nabla^2\phi_*^z (x)  & = O(|x|^{-2}),
\endaligned
\right.
\end{equation}
as $|x| \to \infty$.
Recall  that  $y_z = z + x_z$ and $Q_z= z+Q(0, 1) $.
If $\e Q_z \subset \Omega$, we let 
\begin{equation}\label{chi-3}
\chi_{\e, \eta} (x) = \begin{cases}
1 & \text{in} \ \ \e Q_z  \setminus  \e B(y_z, 1/5), \\
 \phi^z_*(\frac{x-\e  y_z}{\e \eta} ) / |\ln \eta| & \text{in} \ \ \e B(y_z, 1/6 )   \setminus  \e T_{z, \eta}, \\
 0 & \text{in} \ \  \e T_{z, \eta}.
\end{cases}
\end{equation}
In the region $\e B(y_z, 1/5) \setminus  \e B(y_z,1/6)$,  we let $\chi_{\e, \eta} $ solve the following Dirichlet problem,
\begin{equation}\label{gap-2}
\begin{cases}
\begin{alignedat}{2}
-\Delta \chi_{\e, \eta}  &= 0 \qquad  &&\text{in} \ \ \e B(y_z, 1/5) \setminus \e  \overline{B(y_z, 1/6 )},  \\
\chi_{\e, \eta}  &= \phi_*^z \Big(\frac{x-\e y_z}{\e \eta}\Big) / |\ln  \eta| \qquad  &&\text{on} \ \ \partial  B(\e y_z,
\e /6),  \\
\chi_{\e, \eta}  &= 1 \qquad  &&\text{on} \ \ \partial   B(\e y_z,\e/5). 
\end{alignedat}
\end{cases}
\end{equation}
Finally, if  $\e Q_z $  is not contained in $\Omega$,
 let $\chi_{\e, \eta} = 1$ in $\e Q_z$, as in the case of $d\ge 3$.
By construction, we have  $\chi_{\e, \eta}  \in H^1(\Omega)$. Moreover,  $\chi_{\e, \eta} =1$ in $\Omega^c$ and $\chi_{\e, \eta}=0$ in 
$\Omega\setminus \Omega_{\e, \eta}$.
Furthermore, in the region $\e B(y_z, 1/5)\setminus B(y_z, 1/6)$, we have $L^\infty$ estimates,
\begin{equation}\label{uni-2}
| \chi_{\e, \eta}  -1|  \le C |\ln \eta|^{-1}
\quad \text{ and } \quad
|\nabla \chi_{\e, \eta} |\le C \e^{-1}  |\ln \eta|^{-1}.
\end{equation}
As in the case $d\ge 3$, the estimates in \eqref{uni-2} follow
from  the maximum principle and the Lipschitz estimates for harmonic functions.

\begin{lemma}\label{LPD2-2} 
Let $\chi_{\e, \eta} $ be defined as above. Then for $1 \le  p < \infty$, 
\begin{equation}\label{est-ch-3}
\| \chi_{\e, \eta}  -1\| _{L^p(\Omega_{\e, \eta})} \leq C|\ln \eta |^{-1}, 
\end{equation}
where $C$ does not depend on $\e$ or $\eta$.
\end{lemma}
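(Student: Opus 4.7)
The plan is to bound $\|\chi_{\e,\eta} - 1\|_{L^p(\Omega_{\e,\eta})}^p$ cell by cell using the piecewise definition in \eqref{chi-3}--\eqref{gap-2}. Since $\chi_{\e,\eta} \equiv 1$ on $\Omega^c$ and on every cell $\e Q_z$ not entirely contained in $\Omega$, only the cells with $\e Q_z \subset \Omega$ contribute, and there are at most $C \e^{-2}$ of them. Inside each such cell I would split the integral into three pieces: the outer piece $\e Q_z \setminus \e B(y_z, 1/5)$ where $\chi_{\e,\eta} - 1 = 0$; the transition annulus $\e B(y_z, 1/5) \setminus \e B(y_z, 1/6)$; and the inner region $\e B(y_z, 1/6) \setminus \e T_{z, \eta}$.

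On the transition annulus the $L^\infty$ bound in \eqref{uni-2} immediately gives $|\chi_{\e,\eta} - 1| \le C |\ln \eta|^{-1}$, so its $L^p$ contribution is at most $C \e^2 |\ln \eta|^{-p}$, which is already of the right order per cell. The main work is the inner region. There I would rescale by $y = (x - \e y_z)/(\e \eta)$ and use \eqref{chi-3} to write
\begin{equation*}
\int_{\e B(y_z, 1/6) \setminus \e T_{z, \eta}} |\chi_{\e,\eta} - 1|^p\, dx
= \frac{(\e \eta)^2}{|\ln \eta|^p} \int_{B(0, 1/(6\eta)) \setminus Y_z^s} \bigl|\phi_*^z(y) - |\ln \eta|\bigr|^p\, dy.
\end{equation*}
From \eqref{ext-2a}--\eqref{asy-2}, $\phi_*^z(y) - \ln|y|$ is bounded on $\R^2 \setminus \overline{Y_z^s}$, uniformly in $z$ thanks to \eqref{c-0}, so on the annulus $c \le |y| \le 1/(6\eta)$ one has $|\phi_*^z(y) - |\ln \eta|| \le C(1 + |\ln(|y|\eta)|)$. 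Passing to polar coordinates and substituting $s = |y|\eta$ converts the right-hand side above to
\begin{equation*}
\frac{C \e^2}{|\ln \eta|^p} \int_{c\eta}^{1/6} s \bigl(1 + |\ln s|\bigr)^p\, ds,
\end{equation*}
and the remaining $s$-integral is bounded uniformly in $\eta$. Hence the inner contribution per cell is at most $C \e^2 |\ln \eta|^{-p}$.

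Summing the per-cell bounds over the $O(\e^{-2})$ cells inside $\Omega$ then yields $\|\chi_{\e,\eta} - 1\|_{L^p(\Omega_{\e,\eta})}^p \le C |\ln \eta|^{-p}$, which is \eqref{est-ch-3}. The only mildly delicate point is handling the logarithmic growth of $\phi_*^z$ at infinity, but it is precisely cancelled by the $|\ln \eta|^{-1}$ normalization in \eqref{chi-3}: the rescaled profile $\phi_*^z(y)/|\ln \eta|$ tends to $1$ at the outer boundary $|y| \sim 1/\eta$ of the rescaled annulus, and the substitution $s = |y|\eta$ makes this cancellation explicit while keeping the error integrable. Unlike the $d\ge 3$ case in Lemma \ref{LP-2}, the bound is the same for all $p\in[1,\infty)$ because the rescaled integrand is no longer polynomially decaying in $|y|$, but only logarithmically large, so the $p$-dependence disappears after normalization.
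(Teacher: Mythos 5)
Your proof is correct and follows essentially the same path as the paper's: both split each cell into the outer region (where $\chi_{\e,\eta}-1=0$), the transition annulus (handled by the $L^\infty$ bound \eqref{uni-2}), and the inner region where the logarithmic asymptotics of $\phi_*^z$ are used, then sum the per-cell bound $C\e^2|\ln\eta|^{-p}$ over the $O(\e^{-2})$ cubes. The only cosmetic difference is that the paper rescales by $\e$ and rewrites $\phi_*^z((x-\e y_z)/(\e\eta)) - |\ln\eta|$ directly as $\ln|(x-\e y_z)/\e| + O(1)$, whereas you rescale by $\e\eta$ and then substitute $s=|y|\eta$; both maneuvers make the same logarithmic cancellation explicit and land on the same integrable profile.
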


\begin{proof}
    
    Note that  if $\e Q_z\subset \Omega$, 
    \begin{equation*}
    \begin{aligned}
     \int_{\e Q_z\setminus \e T_{z, \eta}}
      |\chi_{\e, \eta}  -1|^p  & \le C \e^{2} |\ln \eta|^{-p} + 
        \int_{B(\e y_z, \e/6 )\setminus \e T_{z, \eta}} \left| \frac{\ln| (x-\e y_z)/\e| }{ \ln\eta} \right|^p\, dx \\
   &\le  \frac{ C\e ^2 }{|\ln \eta |^p  }\left\{ 
    \int_{B(0,1 )}  |\ln |  x| |^p \, dx  + 1 \right\}\\
        &\leq \frac{ C \e^2}{| \ln \eta|^p},
    \end{aligned}
    \end{equation*}
    where we have used \eqref{uni-2} as well as the fact that $\phi_*^z(x)=\ln |x| + O(1)$ as $|x|\to \infty$.
Summing over all such cubes yields
$$
 \| \chi_{\e, \eta}  -1 \| ^p_{L^p(\Omega_{\e, \eta })} \leq C|\ln \eta |^{-p},
$$
which yields \eqref{est-ch-3}.
\end{proof}

Recall that $\sigma_\e =\e |\ln \eta|^{1/2}$ for $d=2$.

\begin{lemma}\label{LPGRADD2-2}
  Suppose $\e Q_z \subset \Omega$. Then 
      \begin{equation}    \label{est-ch-4}
    \left(\fint_{\e  Q_z} |\nabla \chi_{\e, \eta} |^p \right)^{1/p} \leq \begin{cases}
    C\sigma_\e^{-1}|\ln\eta|^{-1/2} & \text{if } 1 < p < 2, 
      \\ 
      C\sigma_\e^{-1} & \text{if } p = 2,  \\ 
       C \sigma_\e^{-1} \eta^{\frac{2-p}{p}}|\ln\eta|^{-1/2} & \text{if } 2 < p < \infty,
    \end{cases}
     \end{equation}
    where the constant $C$ does not depend on $\e$ or $\eta$.
 \end{lemma}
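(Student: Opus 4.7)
The plan is to decompose the cube $\e Q_z$ into three pieces on which $\chi_{\e, \eta}$ is defined differently, namely (i) $\e Q_z \setminus \e B(y_z, 1/5)$, where $\chi_{\e, \eta} \equiv 1$ and hence $\nabla \chi_{\e, \eta} = 0$; (ii) the transition annulus $\e B(y_z, 1/5) \setminus \e B(y_z, 1/6)$, on which the uniform estimate \eqref{uni-2} gives $|\nabla \chi_{\e, \eta}| \le C \e^{-1} |\ln \eta|^{-1}$ and therefore contributes at most $C \e^{2-p} |\ln \eta|^{-p}$ to $\int_{\e Q_z} |\nabla \chi_{\e, \eta}|^p$; and (iii) the inner region $\e B(y_z, 1/6) \setminus \e T_{z, \eta}$, where $\chi_{\e, \eta}(x) = \phi_*^z((x - \e y_z)/(\e \eta)) / |\ln \eta|$.

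On piece (iii), the chain rule gives $\nabla \chi_{\e, \eta}(x) = (\e \eta |\ln \eta|)^{-1} \nabla \phi_*^z((x - \e y_z)/(\e \eta))$, so after the change of variables $y = (x - \e y_z)/(\e \eta)$,
\begin{equation*}
\int_{\e B(y_z, 1/6) \setminus \e T_{z,\eta}} |\nabla \chi_{\e, \eta}|^p\, dx
= (\e \eta)^{2-p} |\ln \eta|^{-p} \int_{B(0, 1/(6\eta)) \setminus Y_z^s} |\nabla \phi_*^z(y)|^p\, dy.
\end{equation*}
Using the asymptotic \eqref{asy-2}, together with the fact that $\nabla \phi_*^z$ is $L^p$-integrable in a neighborhood of $\partial Y_z^s$ by the uniform $C^1$ regularity of $Y_z^s$, I estimate the remaining integral by $C \int_c^{1/(6\eta)} r^{1-p}\, dr$, which is bounded when $p > 2$, comparable to $|\ln \eta|$ when $p = 2$, and comparable to $\eta^{p-2}$ when $1 < p < 2$.

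Combining the three regions, summing the contributions from (ii) and (iii), and dividing by $|\e Q_z| = \e^2$ yields: for $1 < p < 2$, $\fint_{\e Q_z} |\nabla \chi_{\e, \eta}|^p \le C \e^{-p} |\ln \eta|^{-p}$, whose $p$th root equals $C \e^{-1} |\ln \eta|^{-1} = C \sigma_\e^{-1} |\ln \eta|^{-1/2}$; for $p = 2$, $\fint \le C \e^{-2} |\ln \eta|^{-1}$, whose square root equals $C \e^{-1} |\ln \eta|^{-1/2} = C \sigma_\e^{-1}$; and for $p > 2$, $\fint \le C \e^{-p} \eta^{2-p} |\ln \eta|^{-p}$, whose $p$th root equals $C \e^{-1} \eta^{(2-p)/p} |\ln \eta|^{-1} = C \sigma_\e^{-1} \eta^{(2-p)/p} |\ln \eta|^{-1/2}$. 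In each case one checks that the contribution from the transition annulus (ii) is dominated by the inner contribution (iii), so the inner estimate sets the overall rate.

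There is no genuine obstacle here; the argument is a direct two-dimensional analogue of Lemma \ref{LPGRAD-2}, with the single substantive calculation being the case dichotomy produced by the radial integral $\int_c^{1/(6\eta)} r^{1-p}\, dr$. The only care needed is to track the extra factor $|\ln \eta|^{-1}$ coming from the normalization of $\phi_*^z$ in \eqref{chi-3}, and to recast the final bounds using $\sigma_\e = \e |\ln \eta|^{1/2}$ in the form displayed in \eqref{est-ch-4}.
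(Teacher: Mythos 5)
Your proof is correct and follows essentially the same route as the paper: decompose $\e Q_z$ into the region where $\chi_{\e,\eta}=1$, the transition annulus controlled by \eqref{uni-2}, and the inner region where you rescale and invoke the decay $|\nabla\phi_*^z(y)|\approx|y|^{-1}$ from \eqref{asy-2}, then evaluate the radial integral $\int r^{1-p}\,dr$ and split into cases. The only cosmetic difference is that you change variables to $y=(x-\e y_z)/(\e\eta)$ first (mirroring the $d\ge 3$ presentation in Lemma \ref{LPGRAD-2}), whereas the paper works directly in the $x$ variable, and for $1<p<2$ the annulus and inner pieces are in fact of the same order rather than the inner one strictly dominating, but this does not affect the conclusion.
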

 
\begin{proof}
    Note  that 
    \begin{equation*}
\begin{aligned}
\int_{\e Q_z} |\nabla \chi_{\e, \eta} |^p 
&\le C \e^{2-p} |\ln \eta|^{-p} + \int_{B(\e y_z, \e/6)\setminus  \e T_{z, \eta}} |\nabla \chi_{\e, \eta} |^p dx \\
&=C \e^{2-p} |\ln \eta|^{-p} +
 \int_{B(\e y_z, \e/6)\setminus  \e T_{z, \eta} }  \left| \left( \frac{C}{|\ln\eta| |x- \e y_z|}\right)\right|^p dx \\
&= C\e^{2-p} | \ln \eta|^{-p} \left\{ 1+ 
 \int_{c\eta}^1 r^{1-p} dr \right\},
\end{aligned}
    \end{equation*}
    where we have used \eqref{uni-2} and  \eqref{asy-2}.
    It follows that 
\begin{equation*}
    \begin{aligned}
 \left(\fint_{\e Q_z} |\nabla \chi_{\e, \eta} |^p \right)^{1/p}   \leq  \begin{cases}
     C\e^{-1} |\ln\eta|^{-1} & \text{if } 1 < p < 2 ,
      \\ 
      C\e^{-1}|\ln\eta|^{-\frac12} & \text{if } p = 2, \\ 
       C \e^{-1} \eta^{\frac{2-p}{p}}|\ln\eta|^{-1} & \text{if } 2 < p < \infty, 
    \end{cases}
\end{aligned}
    \end{equation*}
  which gives \eqref{est-ch-4}
  \end{proof}
 
\begin{lemma}\label{lemma-2c}
Suppose $\e Q_z \subset \Omega$. Then
\begin{equation}\label{est-2c}
\left | 
\int_{\e Q_z} \nabla \chi_{\e, \eta} \cdot \nabla \phi
-    2\pi \sigma_\e^{-2} 
\int_{\e Q_z} \phi \right|
\le  C \e^{-1} |\ln \eta|^{-1}   \int_{\e Q_z} |\nabla \phi |,
\end{equation}
where $\phi \in H^1(\e Q_z)$ and $\phi=0$ in  $ \e T_{z, \eta}$.
\end{lemma}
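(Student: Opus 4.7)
The plan is to mirror the argument of Lemma \ref{lemma-3c}, with the two-dimensional replacements: the factor $|\ln\eta|^{-1}$ plays the role of $\eta^{d-2}$, and a universal capacity constant $2\pi$ replaces the shape-dependent $\mu_*^z$. First I would decompose
\[
\int_{\e Q_z} \nabla \chi_{\e,\eta} \cdot \nabla \phi
= \int_{\e Q_z \setminus B(\e y_z, \e/6)} \nabla \chi_{\e,\eta} \cdot \nabla \phi
+ \int_{B(\e y_z, \e/6) \setminus \e T_{z,\eta}} \nabla \chi_{\e,\eta} \cdot \nabla \phi.
\]
The outer integral is immediately bounded by $C \e^{-1} |\ln\eta|^{-1} \int_{\e Q_z} |\nabla \phi|$, since $\chi_{\e,\eta}$ is constant on $\e Q_z \setminus \e B(y_z,1/5)$, and on the bridging annulus the $L^\infty$ gradient bound $|\nabla \chi_{\e,\eta}| \le C \e^{-1} |\ln\eta|^{-1}$ from \eqref{uni-2} applies.

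For the inner integral, $\chi_{\e,\eta}$ is harmonic in $B(\e y_z, \e/6) \setminus \e T_{z,\eta}$ and $\phi$ vanishes on $\e T_{z,\eta}$, so integration by parts converts it to a boundary integral on $\partial B(\e y_z, \e/6)$. Following the split used in \eqref{secterm-2}, I would write
\[
\int_{\partial B(\e y_z,\e/6)} \frac{\partial \chi_{\e,\eta}}{\partial n}\, \phi
= \int_{\partial B(\e y_z,\e/6)} \Bigl(\frac{\partial \chi_{\e,\eta}}{\partial n} - \bar c\Bigr)(\phi-\alpha)
+ \bar c \int_{\partial B(\e y_z,\e/6)} \phi,
\]
with $\bar c=\fint_{\partial B(\e y_z,\e/6)} \partial \chi_{\e,\eta}/\partial n$ and $\alpha=\fint_{B(\e y_z,\e/6)} \phi$. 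The oscillation term is controlled by the refined asymptotic $\nabla^2 \phi_*^z(x)=O(|x|^{-2})$ from \eqref{asy-2}, which on $\partial B(\e y_z,\e/6)$ gives $|\partial \chi_{\e,\eta}/\partial n - \bar c| \le C \e^{-1} |\ln\eta|^{-1} \eta$; combining this with the trace/Poincar\'e chain from Remark \ref{re-trace} yields a bound of the desired form.

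The key two-dimensional computation is the flux identity
\[
\int_{\partial B(\e y_z,\e/6)} \frac{\partial \chi_{\e,\eta}}{\partial n}\, d\sigma
= |\ln\eta|^{-1}\int_{\partial B(0,1/(6\eta))} \frac{\partial \phi_*^z}{\partial n}\, d\sigma
= 2\pi |\ln\eta|^{-1},
\]
where the first equality is a change of variables and the second follows by applying the divergence theorem to the harmonic function $\phi_*^z$ on $B(0,R)\setminus Y_z^s$, letting $R\to\infty$, and using the normalization $\phi_*^z(x)=\ln|x|+O(1)$ from \eqref{ext-2a} to read off the flux at infinity as $2\pi$. Since $2\pi \sigma_\e^{-2} |\e Q_z|=2\pi|\ln\eta|^{-1}$ agrees exactly with this flux, the mean term collapses to
\[
2\pi |\ln\eta|^{-1}\Bigl( \fint_{\partial B(\e y_z,\e/6)} \phi - \fint_{\e Q_z} \phi\Bigr),
\]
which \eqref{trace-2} bounds by $C\e^{-1}|\ln\eta|^{-1}\int_{\e Q_z}|\nabla \phi|$.

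The main (and essentially only) conceptual obstacle is recognizing that the logarithmic normalization in \eqref{ext-2a} fixes the leading coefficient of $\phi_*^z$ at infinity to be $1$, independent of the shape $Y_z^s$, so that the flux at infinity is the universal constant $2\pi$; this is what forces the coefficient in \eqref{est-2c} to be $z$-independent. It is exactly compensated by the explicit $|\ln\eta|^{-1}$ factor inserted in the definition \eqref{chi-3}, and combining the two factors reproduces the $2\pi \sigma_\e^{-2}$ prefactor in the statement.
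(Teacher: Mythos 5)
Your proof follows the same decomposition and the same integration-by-parts/splitting strategy as the paper's own argument, so the overall route is essentially identical. One small point of divergence worth noting: for the mean (flux) term you evaluate $\int_{\partial B(\e y_z,\e/6)}\partial_n\chi_{\e,\eta}=2\pi|\ln\eta|^{-1}$ \emph{exactly}, by invoking harmonicity of $\phi_*^z$ on the annulus $B(0,R)\setminus \overline{Y_z^s}$ and sending $R\to\infty$ under the normalization $\phi_*^z(y)=\ln|y|+O(1)$; the paper instead records the cruder asymptotic $\frac{1}{|\ln\eta|}\{2\pi+O(\eta)\}$ and then controls the residual $O(\eta)$ term by an additional appeal to Lemma \ref{lemma-P0} with $p=1$. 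Your exact flux computation is correct (the divergence theorem makes the flux $R$-independent, and the $O(\eta)$ term in the paper's formula is in fact zero), and it slightly streamlines the proof by removing that last step. A minor remark: you attribute the oscillation bound $|\partial_n\chi_{\e,\eta}-\bar c|\le C\e^{-1}|\ln\eta|^{-1}\eta$ to $\nabla^2\phi_*^z=O(|y|^{-2})$, but the sharper factor of $\eta$ actually comes from the gradient asymptotic $\nabla\phi_*^z(y)=y/|y|^2+O(|y|^{-2})$ (the leading radial term has constant normal derivative on the sphere); in any event the coarser bound $C\e^{-1}|\ln\eta|^{-1}$ that the paper uses already suffices, so this does not affect the validity of the argument.
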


\begin{proof} 

The proof is similar to that of Lemma \ref{lemma-3c}.
Note that 
   \begin{equation}\label{std2-2}
    \begin{aligned}
&       \int_{\e Q_z}   \nabla \chi_{\e, \eta}  \cdot \nabla \phi 
     - 2\pi \sigma_\e^{-2}  \int_{\e Q_z}   \phi \\
&      =
 \int_{ B(\e y_z, \e/5)\setminus B(\e y_z,  \e /6)} 
     \nabla \chi_{\e, \eta}  \cdot \nabla \phi 
     + \int_{B(\e y_z, \e /6) \setminus \e T_{z, \eta}} \nabla \chi_{\e, \eta} \cdot \nabla \phi
     -  \frac{2 \pi}{   |\ln \eta | } \fint_{\e Q_z}   \phi \\ 
     &= \int_{ B(\e y_z, \e/5)\setminus B(\e y_z,  \e /6)} 
     \nabla \chi_{\e, \eta}  \cdot \nabla \phi 
+ \left\{ \int_{\partial B(\e y_z, \e/6) } \frac{\partial \chi_{\e, \eta} }{\partial n  } \phi 
     - \frac{2\pi}{  |\ln \eta |} \fint_{\e Q_z}  \phi\right\} \\
     &= I_1 +I_2,
         \end{aligned}
 \end{equation}
where we have used  integration  by parts and the fact that $\chi_{\e, \eta}$ is harmonic in $B(\e y_z, \e/6) \setminus \e T_{z, \eta} $.
By \eqref{uni-2},  it is easy to see that $|I_1|$ is bounded by the right-hand side of \eqref{est-2c}.

To bound $I_2$, we write it as 
\begin{equation}\label{secterm-2a}
\begin{alignedat}{1}
I_2 = & \int_{\partial B(\e y_z,\e/6)} \left(\frac{\partial \chi_{\e, \eta} } {\partial n } - \fint_{\partial B(\e y_z,\e/6)}\frac{\partial \chi_{\e, \eta}}{\partial n }\right)
\cdot \left(\phi - \alpha\right) \\
&\qquad +\left\{  \int_{\partial B(\e y_z,\e/6)}\frac{\partial \chi_{\e, \eta} }{\partial n } \fint_{\partial B(\e y_z,\e/6)}\phi 
-\frac{2\pi}{| \ln \eta|}  \fint_{\e Q_z}  \phi\right\} \\
&=I_{21} +I_{22}, 
\end{alignedat}
\end{equation}
where  $\alpha$ is a constant to be determined. Using \eqref{asy-2}, one may show that
\begin{equation}
\aligned
|I_{21}|
& \le \frac{C}{\e |\ln \eta|} \int_{\partial B(\e y_z, \e/6)} |\phi-\alpha|\\
& \le \frac{C}{\e |\ln \eta|} \int_{\e Q_z} |\nabla \phi|, 
\endaligned
\end{equation}
where we have choose $\alpha$ to be the average of $\phi$ over $\e Q_z$, as in the proof of Lemma \ref{lemma-3c}.

Finally, to handle $I_{22}$, we note that by \eqref{asy-2},
$$
\int_{\partial B(\e y_z, \e/6)} \frac{\partial \chi_{\e, \eta}}{\partial n}
=\frac{1}{|\ln  \eta|} \left\{ 2 \pi + O(\eta )  \right\}.
$$
This, together with \eqref{trace-2}, shows that
$$
\aligned
|I_{22}|
 & \le \frac{C}{\e |\ln \eta| } \int_{\e Q_z} |\nabla \phi|
+\frac{C \eta}{\e |\ln \eta|  } \int_{\e Q_z} |\phi|\\
& \le C \frac{C}{\e |\ln \eta| } \int_{\e Q_z} |\nabla \phi|,
\endaligned
$$
where we have used Lemma \ref{lemma-P0} with $p=1$  for the last inequality.
This completes the proof.
\end{proof}


\section{Convergence Rates}\label{section-Con}

In this section we study  the Dirichlet problem with nonhomogeneous boundary conditions, 
\begin{equation}\label{NH-2}
\begin{cases}
\begin{alignedat}{2}
- \Delta u_\e &= 0 \qquad &&\text{in} \ \ \Omega_{\e, \eta}, \\
u_\e &= h \qquad &&\text{on} \ \ \partial \Omega, \\
u_\e &= 0 \qquad &&\text{on} \ \ \partial\Omega_{\e, \eta} \setminus  \partial \Omega,
\end{alignedat}
\end{cases}
\end{equation}
where $\Omega_{\e, \eta} $ is given in \eqref{O-e} and $h \in H^{1/2}(\partial\Omega)$.
Let $V_\e (x)=  V(x/\e)$, where
\begin{equation}\label{V}
V(y)= \sum_{z\in \mathbb{Z}^d} 
\mu_*^z   \chi_{Q(z, 1)} (y),
\end{equation}
and $\mu_*^z$ is given by \eqref{mu} for $d\ge 3$ and $\mu_*^z =2 \pi  $ for $d=2$.
In \eqref{V} we have used  $\chi_{Q(z, 1)}$ to denote  the characteristic function of the cell $Q(z, 1)$.
Note that $c_0 \le V (y) \le c_1$ for some $c_0, c_1>0$.
Let $\chi_{\e, \eta}$ be the corrector constructed in the last two sections  for the domain $\Omega_{\e, \eta}$.
We will show that the solution $u_\e$ of \eqref{NH-2}  is well approximated by $\chi_{\e, \eta} u_{0, \e}$, where $u_{0, \e}$ is
 the solution  of  a boundary value problem in $\Omega$,
 \begin{equation}\label{Shro-2}
 \left\{
\aligned
- \Delta u_{0,\e} + \sigma_\e^{-2} V_\e(x)u_{0,\e}  & = 0 &\quad &  \text{in} \ \ \Omega, \\
u_{0,\e}  & = h & \quad &  \text{on} \ \ \partial \Omega,
\endaligned
\right.
\end{equation}
 which we will call the intermediate problem, 
  for a Schr\"odinger operator $-\Delta + \sigma_\e^{-2} V_\e$.  
  The parameter $\sigma_\e$, which depends on $\e$ and $\eta$,  is given by \eqref{sigma}.
 The rest of this section will be divided into two parts. 
The first part deals with   the case  $d \geq 3$, while   the second treats the case $d =2$.


\subsection{Dimension $d \geq 3$}

The goal of this subsection is to prove the following.

 \begin{thm}
    \label{cor2-2}
 Let $u_\e$ be a solution of \eqref{NH-2} and $u_{0,\e}$ a solution of \eqref{Shro-2}.
  Let 
  \begin{equation}\label{r-e}
  r_{\e}  = u_\e - \chi_{\e, \eta}  u_{0,\e},
  \end{equation}
   where $\chi_{\e, \eta} $ is the corrector constructed in Section \ref{section-C1}.
  Assume that $u_{0,\e} \in W^{1,p}(\Omega)$ for some $2<p< \infty$.
  Then for $d \geq 3$, 
 \begin{equation}\label{con-3d0}
\| \nabla r_\e\| _{L^2(\Omega_{\e, \eta})}\leq  \begin{cases}
C \eta^\frac{d-2}{2}\left(\sigma_\e^{-1} \| u_{0,\e}\| _{L^p( \Omega)
} + \| \nabla u_{0,\e}\| _{L^p( \Omega)}\right)  & \text{if} \ \ p \geq d, \\
C  \eta^{d/q} \left( \sigma_\e^{-1} \| u_{0,\e}\| _{L^p( \Omega)
} + \| \nabla u_{0,\e}\| _{L^p( \Omega)}\right) 
& \text{if} \ 2< p < d,
\end{cases}
\end{equation}
where $\frac12 =\frac{1}{p}+\frac{1}{q}$  and  $C $ does not depend on $\e$ or $\eta$.
\end{thm}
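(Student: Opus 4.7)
First, note that $r_\e=u_\e-\chi_{\e,\eta}u_{0,\e}$ lies in $H_0^1(\Omega_{\e,\eta})$: on $\partial\Omega$ it equals $h-1\cdot h=0$, while on each $\partial(\e T_{z,\eta})$ both $u_\e$ and $\chi_{\e,\eta}$ vanish. Extending $r_\e$ by zero to $\Omega$ makes it an admissible test function in the weak formulations of $-\Delta u_\e=0$ in $\Omega_{\e,\eta}$ and of $(-\Delta+\sigma_\e^{-2}V_\e)u_{0,\e}=0$ in $\Omega$. Since $\int\nabla u_\e\cdot\nabla r_\e=0$, the plan is to start from
\[
\int_{\Omega_{\e,\eta}}|\nabla r_\e|^2=-\int u_{0,\e}\nabla r_\e\cdot\nabla\chi_{\e,\eta}-\int\chi_{\e,\eta}\nabla r_\e\cdot\nabla u_{0,\e}
\]
and reduce each piece using the corrector identities and the intermediate equation.

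For the first integral I would use the product rule $u_{0,\e}\nabla r_\e\cdot\nabla\chi_{\e,\eta}=\nabla\chi_{\e,\eta}\cdot\nabla(u_{0,\e}r_\e)-r_\e\nabla\chi_{\e,\eta}\cdot\nabla u_{0,\e}$ and apply Lemma \ref{lemma-3c} to $\phi=u_{0,\e}r_\e$ (which vanishes on each hole because $r_\e$ does), summed over cells $\e Q_z\subset\Omega$; this produces $\int_{\Omega'_{\e,\eta}}\sigma_\e^{-2}V_\e u_{0,\e}r_\e$ plus an error $E$ controlled by $C\e^{-1}\eta^{d-2}\int|\nabla(u_{0,\e}r_\e)|$. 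For the second integral I would split $\chi_{\e,\eta}=1+(\chi_{\e,\eta}-1)$ and use $r_\e$ as test function in the weak form of the intermediate equation to replace $-\int\nabla u_{0,\e}\cdot\nabla r_\e$ by $\sigma_\e^{-2}\int_{\Omega_{\e,\eta}}V_\e u_{0,\e}r_\e$. The two potential contributions almost cancel, leaving only a boundary-layer remainder supported in $\Omega_{\e,\eta}\setminus\Omega'_{\e,\eta}$, and one obtains
\[
\|\nabla r_\e\|_{L^2}^2=\sigma_\e^{-2}\!\int_{\Omega_{\e,\eta}\setminus\Omega'_{\e,\eta}}\!V_\e u_{0,\e}r_\e-E+\int r_\e\nabla\chi_{\e,\eta}\cdot\nabla u_{0,\e}-\int(\chi_{\e,\eta}-1)\nabla r_\e\cdot\nabla u_{0,\e}.
\]

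The main obstacle is the term $\int r_\e\nabla\chi_{\e,\eta}\cdot\nabla u_{0,\e}$, since a direct H\"older using $\|\nabla\chi_{\e,\eta}\|_{L^q}\leq C\e^{-1}\eta^{d/q-1}$ (Lemma \ref{LPGRAD-2}) together with $\|r_\e\|_{L^2}\leq C\sigma_\e\|\nabla r_\e\|_{L^2}$ (Lemma \ref{Lemma-P}) produces a divergent $\eta^{-d/p}$ factor. To avoid this I would integrate by parts: the vector field $r_\e(\chi_{\e,\eta}-1)\nabla u_{0,\e}$ has zero trace on $\partial\Omega_{\e,\eta}$ ($\chi_{\e,\eta}-1$ vanishes on $\partial\Omega$ while $r_\e$ vanishes on the hole boundaries), so expanding $\mathrm{div}(r_\e(\chi_{\e,\eta}-1)\nabla u_{0,\e})=0$ and using $\Delta u_{0,\e}=\sigma_\e^{-2}V_\e u_{0,\e}$ gives
\[
\int r_\e\nabla\chi_{\e,\eta}\cdot\nabla u_{0,\e}=-\int(\chi_{\e,\eta}-1)\nabla r_\e\cdot\nabla u_{0,\e}-\sigma_\e^{-2}\!\int r_\e(\chi_{\e,\eta}-1)V_\e u_{0,\e},
\]
trading the bad $\nabla\chi_{\e,\eta}$ for the well-behaved $\chi_{\e,\eta}-1$.

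After substituting, four terms remain. The two containing $\chi_{\e,\eta}-1$ are estimated by H\"older with exponent $q$ given by $1/q=1/2-1/p$ and Lemma \ref{LP-2}, which supplies $\|\chi_{\e,\eta}-1\|_{L^q}\leq C\eta^{d/q}$ (or $C\eta^{d-2}$ when $q<d/(d-2)$); for the one carrying the prefactor $\sigma_\e^{-2}$, the Poincar\'e inequality of Lemma \ref{Lemma-P} adds a compensating $\sigma_\e$, yielding the expected $\sigma_\e^{-1}\|u_{0,\e}\|_{L^p}$. The error $E$ is handled in the same spirit using the identity $\e^{-1}\eta^{d-2}=\sigma_\e^{-1}\eta^{(d-2)/2}$ together with the Poincar\'e bound. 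Finally, the boundary-layer term requires a one-dimensional Poincar\'e inequality for $r_\e$ across the tubular layer of width of order $\e$ (using $r_\e=0$ on $\partial\Omega$), which combines with $\sigma_\e^{-2}\e=\sigma_\e^{-1}\eta^{(d-2)/2}$ to match the advertised rate. Dividing by $\|\nabla r_\e\|_{L^2}$ and checking case by case that the exponents reduce to $\eta^{(d-2)/2}$ for $p\geq d$ and $\eta^{d/q}$ for $2<p<d$ completes the argument.
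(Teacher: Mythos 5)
Your proposal is correct and follows essentially the same approach as the paper's proof. The paper also starts from $\int\nabla r_\e\cdot\nabla\phi = -\int u_{0,\e}\nabla\chi_{\e,\eta}\cdot\nabla\phi - \int\chi_{\e,\eta}\nabla u_{0,\e}\cdot\nabla\phi$, applies the same product-rule rewriting to isolate $\nabla\chi_{\e,\eta}\cdot\nabla(u_{0,\e}\phi)$, uses the very same integration by parts of the flux-free vector field $\phi(\chi_{\e,\eta}-1)\nabla u_{0,\e}$ together with $\Delta u_{0,\e}=\sigma_\e^{-2}V_\e u_{0,\e}$ to trade $\nabla\chi_{\e,\eta}$ for $\chi_{\e,\eta}-1$, and closes with the cell-wise corrector identity (packaged as Lemma \ref{GRAD-2}, which handles the boundary cells via the same Poincar\'e-type argument you apply explicitly), H\"older, the $\sigma_\e$-weighted Poincar\'e inequality, and Lemma \ref{LP-2}. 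The only organizational difference is that you treat the boundary layer and the Lemma \ref{lemma-3c} error separately rather than citing Lemma \ref{GRAD-2} as a single black box.
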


Note that  $\chi_{\e, \eta} \in W^{1, p}(\Omega)$ for any $p>2$. Moreover, 
$\chi_{\e, \eta}=1$ on $\partial \Omega$ and $\chi_{\e, \eta}=0$ on $\partial\Omega_{\e, \eta}\setminus \partial \Omega$.
It  follows that $r_\e \in H_0^1(\Omega_{\e, \eta})$.
The proof of Theorem \ref{cor2-2} will rely heavily on the estimates for $\chi_{\e, \eta} $ in Section \ref{section-C1}.

\begin{lemma}\label{GRAD-2}  
 Let $\phi \in H_0^1(\Omega_{\e, \eta} )$. Then 
\begin{equation}\label{est-3c1}
 \left| \int_{\Omega_{\e, \eta}} \nabla \chi_{\e, \eta} \cdot  \nabla \phi 
 - \int_{\Omega_{\e, \eta}} \sigma_\e^{-2} V_\e \phi \right| \leq C\e^{-1} \eta^{d-2}  \int_{\Omega_{\e, \eta}}|\nabla \phi|, 
 \end{equation}
where $V_\e = V(x/\e)$ and $V(y)$  is defined in \eqref{V}.
\end{lemma}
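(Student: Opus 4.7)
The plan is to reduce this global statement to the single-cell estimate of Lemma \ref{lemma-3c} by splitting $\Omega_{\e,\eta}$ into the interior cells on which the corrector is nontrivial and a thin boundary strip on which it equals $1$. Set $\mathcal{I} = \{z\in \mathbb{Z}^d : \e Q_z \subset \Omega\}$ and write $\Omega_{\e,\eta} = \bigcup_{z\in \mathcal{I}}(\e Q_z \setminus \e T_{z,\eta}) \cup R$, where $R := \Omega \setminus \bigcup_{z\in \mathcal{I}} \e Q_z$ lies within an $O(\e)$-neighborhood of $\partial \Omega$. On $R$ one has $\chi_{\e,\eta} \equiv 1$, so $\nabla \chi_{\e,\eta} = 0$ there, and on each $\e Q_z$ with $z \in \mathcal{I}$ one has $V_\e = \mu_*^z$ by \eqref{V}. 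Extending $\phi$ by zero across the holes yields the decomposition
\begin{equation*}
\int_{\Omega_{\e,\eta}} \nabla \chi_{\e,\eta} \cdot \nabla \phi - \int_{\Omega_{\e,\eta}} \sigma_\e^{-2} V_\e \phi = \sum_{z\in\mathcal{I}} \left[ \int_{\e Q_z} \nabla \chi_{\e,\eta} \cdot \nabla \phi - \sigma_\e^{-2} \mu_*^z \int_{\e Q_z} \phi \right] - \sigma_\e^{-2} \int_R V_\e \phi.
\end{equation*}

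For the interior sum, Lemma \ref{lemma-3c} applies cell by cell (since $\phi$ vanishes on every $\e T_{z,\eta}$), and aggregating yields a bound of $C \e^{-1} \eta^{d-2} \int_{\Omega_{\e,\eta}} |\nabla \phi|$. For the boundary remainder, using $V_\e \le \mu_1$ pointwise and $\sigma_\e^{-2} = \e^{-2} \eta^{d-2}$, the task reduces to proving
\begin{equation*}
\int_R |\phi|\, dx \le C \e \int_{\Omega_{\e,\eta}} |\nabla \phi|\, dx.
\end{equation*}
This is a standard Hardy-type inequality valid for any $\phi \in H_0^1(\Omega_{\e,\eta})$: the region $R$ sits inside a tubular strip of width at most $\sqrt{d}\,\e$ around the Lipschitz boundary $\partial \Omega$, on which $\phi$ vanishes, and the inequality follows from the fundamental theorem of calculus along inward normals in local boundary coordinates. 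Multiplying by $\sigma_\e^{-2} = \e^{-2}\eta^{d-2}$ gives exactly $C \e^{-1} \eta^{d-2} \int |\nabla \phi|$.

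The argument is mostly bookkeeping, and the only genuinely new ingredient beyond the local estimate of Section \ref{section-C1} is the Hardy-type bound on the boundary strip; that is where one might reasonably expect trouble. The match of constants is not coincidental: the single factor of $\e$ produced by the $O(\e)$ width of the strip is precisely what is needed to turn $\e^{-2}$ into $\e^{-1}$, so the boundary contribution lands on the same right-hand side as the interior cell contributions with no loss.
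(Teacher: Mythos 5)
Your proof is correct and matches the paper's own argument almost exactly: both split the bilinear form into a sum over interior cells (handled by Lemma \ref{lemma-3c}) plus a boundary remainder in an $O(\e)$-thick strip near $\partial\Omega$, and both close the boundary term with the Poincar\'e-type inequality $\int_{\{\text{dist}(x,\partial\Omega)\le C\e\}}|\phi|\le C\e\int|\nabla\phi|$ using $\phi=0$ on $\partial\Omega$. The only cosmetic difference is that you state explicitly that $\nabla\chi_{\e,\eta}=0$ on the boundary strip, which the paper leaves implicit by simply omitting the gradient term there.
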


\begin{proof}
We extend $\phi$ to $\mathbb{R}^d$ by zero.
Note that 
\begin{equation}\label{gdest1-2}
\aligned
 &\left|  \int_{\Omega_{\e, \eta}} \nabla \chi_{\e, \eta}  \cdot \nabla \phi  
 -\int_{\Omega_{\e, \eta} } \sigma_\e^{-2} V_\e \phi \right| \\
 & \le \sum_{z\in \Omega'_{\e} } \left |  \int_{\e Q_z } \nabla \chi_{\e, \eta}  \cdot \nabla \phi  
 -\int_{\e Q_z}  \sigma_\e^{-2} V_\e \phi\right | +  \left | \int_\mathcal{F} \sigma_\e^{-2} V_\e \phi \right|,
\endaligned
\end{equation}
where $\Omega'_{\e}$ is the union of cubes $\e Q_z$ lying entirely inside of $\Omega$ and $\mathcal{F}$ is the union 
of cubes with non-empty intersection with the boundary of $\Omega$. 
By Lemma \ref{lemma-3c}, the first term in the right-hand side of \eqref{gdest1-2} is bounded by 
the right-hand side of \eqref{est-3c1}.
For the second term   we have 
\begin{equation*}
\begin{alignedat}{1}
\left| \int_{\mathcal{F}} 
\sigma_\e^{-2} V_\e \phi \right| 
&\leq C\sigma_\e^{-2} 
\int_{\text{\rm dist(} x,\partial \Omega)\leq C\e} |\phi|\\ 
&\leq C\e \sigma_\e^{-2}  \int_{\text{\rm dist}( x,\partial \Omega)
\leq C\e} |\nabla \phi| 
\leq C \e^{-1} \eta^{d-2}\int_{\Omega_{\e, \eta }} |\nabla \phi|, 
\end{alignedat}
\end{equation*}
where we have used a Poincar\'e-Sobolev inequality for the second inequality.
\end{proof} 

\begin{proof}[Proof of Theorem \ref{cor2-2}]

Let $u_\e$ be a solution of  \eqref{NH-2} and $u_{0,\e}$ a solution of  \eqref{Shro-2}.
Assume further that $u_{0,\e} \in W^{1, p} (\Omega)$ for some $p>2$.
 Let $r_\e = u_\e - \chi_{\e, \eta}  u_{0,\e} $.
Note that for any  $ \phi\in H_0^1(\Omega_{\e, \eta})$,
\begin{equation}\label{preibp-2}
\begin{alignedat}{1}
\int_{\Omega_{\e, \eta}} \nabla r_\e \cdot \nabla \phi 
&= 
  - \int_{\Omega_{\e, \eta}}  (\nabla \chi_{\e, \eta}  \cdot  \nabla \phi ) u_{0,\e} 
  - \int_{\Omega_{\e, \eta}} \chi_{\e, \eta}  \left( \nabla u_{0,\e} \cdot \nabla \phi \right) \\ 
&= - \int_{\Omega_{\e, \eta}} \nabla \chi_{\e, \eta}  \cdot \nabla( u_{0,\e} \phi) 
+\int_{\Omega_{\e, \eta}}[ \nabla (\chi_{\e, \eta} -1) \cdot \nabla u_{0,\e}] \phi  \\
& \qquad \qquad \qquad  - \int_{\Omega_{\e, \eta}} (\chi_{\e, \eta}  -1) \left( \nabla u_{0,\e} \cdot \nabla \phi \right) 
- \int_{\Omega_{\e, \eta}}\left( \nabla u_{0,\e} \cdot \nabla \phi \right).
\end{alignedat}
 \end{equation}
 By using integration  by parts,  the right-hand side of \eqref{preibp-2} becomes 
 \begin{equation*}
 \begin{aligned}
 - \int_{\Omega_{\e, \eta}} \nabla \chi_{\e, \eta} \cdot  \nabla (u_{0,\e}\phi) 
 &- 2\int_{\Omega_{\e, \eta}} (\chi_{\e, \eta}  -1) \left( \nabla u_{0,\e} \cdot \nabla \phi \right) \\ 
  &- \int_{\Omega_{\e, \eta}} (\chi_{\e, \eta}  -1) \Delta u_{0,\e} \cdot \phi +  \int_{\Omega_{\e, \eta}}\left( \Delta u_{0,\e} \cdot  \phi \right), 
 \end{aligned}
 \end{equation*}
which, in  view of  \eqref{Shro-2}, equals to 
\begin{equation*}
\begin{aligned}
 - \int_{\Omega_{\e, \eta}} \nabla \chi_{\e, \eta} \cdot  \nabla (u_{0,\e }\phi) + \sigma_\e^{-2} 
 \int_{\Omega_{\e, \eta}} V_\e  u_{0,\e} \phi 
 &- 2\int_{\Omega_{\e, \eta}} (\chi_{\e, \eta}  -1) \left( \nabla u_{0,\e} \cdot \nabla \phi \right) 
 \\
 &- \sigma_\e^{-2} \int_{\Omega_{\e, \eta} } (\chi_{\e, \eta}  -1) V_\e  u_{0,\e} \cdot \phi.
\end{aligned}
 \end{equation*}
 Note that $V_\e(x) \leq c_2$. We now apply Lemma \ref{GRAD-2} to obtain 
 \begin{equation*}
 \begin{aligned}
  \left| \int_{\Omega_{\e, \eta} } \nabla r_\e \cdot \nabla \phi \right|
 &  \leq C\e^{-1}\eta^{d-2} \int_{\Omega_{\e, \eta}} |\nabla (u_{0,\e} \phi)| 
  +2 \int_{\Omega_{\e, \eta} }  |\chi_{\e, \eta}  -1|
  | \nabla u_{0,\e} |  |\nabla \phi |  \\ 
  & \qquad +  C \sigma_\e^{-2} \int_{\Omega_{\e, \eta} } |\chi_{\e, \eta}  -1|  |u_{0,\e} |  | \phi|  \\
&   \leq  C\e^{-1}\eta^{d-2} 
\int_{\Omega_{\e, \eta}} |\nabla u_{0,\e}| | \phi| 
+C\e^{-1}\eta^{d-2} \int_{\Omega_{\e, \eta}} |\nabla \phi| | u_{0,\e} |\\ 
&\qquad
   + 2\int_{\Omega_{\e, \eta}} |\chi_{\e, \eta}  -1| 
    | \nabla u_{0,\e}|   |\nabla \phi |
     + C \sigma_\e^{-2} \int_{\Omega_{\e, \eta}} | \chi_{\e, \eta}  -1|  |u_{0,\e} |  |\phi|.
 \end{aligned}
 \end{equation*}
Applying H\"older's inequality yields  
\begin{equation}\label{con-10}
\begin{aligned}
  \left|  \int_{\Omega_{\e, \eta} } \nabla r_\e \cdot \nabla \phi \right|
  \leq &C\sigma_\e^{-1} \eta^{\frac{d-2}{2}}\| \nabla \phi\|_{L^2(\Omega_{\e, \eta})}||u_{0,\e}\|_{L^2(\Omega_{\e, \eta})} \\
&\qquad  + C\sigma_\e^{-1} \eta^{\frac{d-2}{2}}\| \phi\|_{L^{2}(\Omega_{\e, \eta})}\| \nabla u_{0,\epsilon}\|_{L^{2}(\Omega_{\e, \eta})} \\ 
  & \qquad \qquad
  +C\| \chi_{\e, \eta}  -1\|_{L^q(\Omega_{\e, \eta})} \| \nabla \phi\|_{L^2(\Omega_{\e, \eta})} \|\nabla u_{0,\e}\|_{L^p(\Omega_{\e, \eta})} \\ 
  & \qquad \qquad\qquad
  + C\sigma_\e^{-2} \| \chi_{\e, \eta}  -1\|_{L^q(\Omega_{\e, \eta})} \| \phi\|_{L^2(\Omega_{\e, \eta})} \| u_{0,\e}\| _{L^p(\Omega_{\e, \eta})},
    \end{aligned}
    \end{equation} 
where $\frac{1}{2} = \frac{1}{q} + \frac{1}{p }$. 
By applying the  Poincar\'e inequality $\|\phi\|_{L^2(\Omega_{\e, \eta})} \le C \sigma_\e \|\nabla \phi \|_{L^2(\Omega_{\e, \eta})}$ in \eqref{con-10}
 we have 
$$
\aligned
\left|  \int_{\Omega_{\e, \eta} } \nabla r_\e \cdot \nabla \phi \right|
&\leq C \eta^{\frac{d-2}{2}}\|\nabla \phi \|_{L^2(\Omega_{\e, \eta})}\left( \sigma_\e^{-1} \|u_{0,\e}\|_{L^2(\Omega)} 
+ \| \nabla u_{0,\e}\|_{L^{2}(\Omega)}\right) \\ 
&+C\| \chi_{\e, \eta}  -1\| _{L^q(\Omega_{\e, \eta})} ||\nabla \phi\|_{L^2(\Omega_{\e, \eta})} \left(\| \nabla u_{0,\e}\| _{L^p(\Omega)}  
+ \sigma_\e^{-1} \| u_{0,\e} \|_{L^p(\Omega)}\right).
 \endaligned
 $$
Choosing $\phi = r_\e$,  we obtain 
\begin{equation*}
    \| \nabla r_\e\| _{L^2(\Omega_{\e, \eta} )} 
    \leq C\left(\eta^{\frac{d-2}{2}}+\| \chi_{\e, \eta}  -1\| _{L^q(\Omega_{\e, \eta})}\right)\left(\| \nabla u_{0,\e}||_{L^{p} (\Omega)} 
    + \sigma_\e^{-1} \| u_{0,\e}\|_ {L^p(\Omega)} \right).
\end{equation*}
Applying Lemma \ref{LP-2} gives  \eqref{con-3d0}.
\end{proof}


\subsection{Dimension $d=2$}

We now consider the case  $d =2$.

\begin{thm}\label{versionD2-2}
Let $u_\e$ be a solution of \eqref{NH-2} amd $u_{0,\e}$ a solution of \eqref{Shro-2}
 Assume that $u_{0,\e} \in W^{1,p}(\Omega)$ for some $2<p<\infty$. 
 Let $r_\e = u_\e - \chi_{\e, \eta}  u_{0,\e} $. Then
 \begin{equation}\label{rate-2}
    \| \nabla r_\e\|_{L^2(\Omega_{\e, \eta} )} 
    \leq C | \ln \eta|^{-1/2}\left( \sigma_\e^{-1} \| u_{0,\e}\| _{L^p( \Omega)}
+ \| \nabla u_{0,\e} 
\|_{L^p( \Omega)}\right),
\end{equation}
where  $C $ does not depend on $\e$ or $\eta$.
\end{thm}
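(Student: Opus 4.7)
The plan is to follow the same template used to prove Theorem \ref{cor2-2}, but with the $d=2$ estimates for $\chi_{\e,\eta}$ from Section \ref{section-C2} in place of those from Section \ref{section-C1}. The key numerical identity to keep in mind is
\begin{equation*}
\e^{-1}|\ln\eta|^{-1} = \sigma_\e^{-1}|\ln\eta|^{-1/2},
\end{equation*}
since $\sigma_\e=\e|\ln\eta|^{1/2}$; this is what converts the gain from Lemma \ref{lemma-2c} into the target rate $|\ln\eta|^{-1/2}$.

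First I would establish the two-dimensional analogue of Lemma \ref{GRAD-2}, namely
\begin{equation*}
\left|\int_{\Omega_{\e,\eta}}\nabla\chi_{\e,\eta}\cdot\nabla\phi - \int_{\Omega_{\e,\eta}}\sigma_\e^{-2}V_\e\,\phi\right| \le C\e^{-1}|\ln\eta|^{-1}\int_{\Omega_{\e,\eta}}|\nabla\phi|
\end{equation*}
for $\phi\in H_0^1(\Omega_{\e,\eta})$, extended by zero. On each interior cube $\e Q_z\subset\Omega$ this is exactly Lemma \ref{lemma-2c}; the cubes meeting $\partial\Omega$ are controlled by a Poincar\'e-type estimate on the boundary strip of width $C\e$, as in the proof of Lemma \ref{GRAD-2}.

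Next, writing $r_\e=u_\e-\chi_{\e,\eta}u_{0,\e}$ and repeating verbatim the integration by parts that led to \eqref{preibp-2}, for any $\phi\in H_0^1(\Omega_{\e,\eta})$ I obtain
\begin{equation*}
\int_{\Omega_{\e,\eta}}\!\nabla r_\e\cdot\nabla\phi = -\!\int_{\Omega_{\e,\eta}}\!\nabla\chi_{\e,\eta}\cdot\nabla(u_{0,\e}\phi)+\sigma_\e^{-2}\!\int_{\Omega_{\e,\eta}}\!V_\e u_{0,\e}\phi-2\!\int_{\Omega_{\e,\eta}}\!(\chi_{\e,\eta}-1)\nabla u_{0,\e}\cdot\nabla\phi-\sigma_\e^{-2}\!\int_{\Omega_{\e,\eta}}\!(\chi_{\e,\eta}-1)V_\e u_{0,\e}\phi,
\end{equation*}
where the equation \eqref{Shro-2} for $u_{0,\e}$ has been used. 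Applying the new gradient identity to the first two terms yields an $\e^{-1}|\ln\eta|^{-1}\int|\nabla(u_{0,\e}\phi)|$ bound; the last two terms are handled by H\"older with exponents $2,p,q$ satisfying $\tfrac12=\tfrac1p+\tfrac1q$, together with Lemma \ref{LPD2-2}, which supplies $\|\chi_{\e,\eta}-1\|_{L^q(\Omega_{\e,\eta})}\le C|\ln\eta|^{-1}$ for every finite $q$.

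Finally, I would invoke Lemma \ref{Lemma-P} in the form $\|\phi\|_{L^2(\Omega_{\e,\eta})}\le C\sigma_\e\|\nabla\phi\|_{L^2(\Omega_{\e,\eta})}$ on every term containing $\phi$ (but not $\nabla\phi$). Using the identity $\e^{-1}|\ln\eta|^{-1}=\sigma_\e^{-1}|\ln\eta|^{-1/2}$, the first two terms collapse to $C|\ln\eta|^{-1/2}\bigl(\sigma_\e^{-1}\|u_{0,\e}\|_{L^2(\Omega)}+\|\nabla u_{0,\e}\|_{L^2(\Omega)}\bigr)\|\nabla\phi\|_{L^2}$, while the last two give $C|\ln\eta|^{-1}\bigl(\|\nabla u_{0,\e}\|_{L^p(\Omega)}+\sigma_\e^{-1}\|u_{0,\e}\|_{L^p(\Omega)}\bigr)\|\nabla\phi\|_{L^2}$. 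Since $|\ln\eta|^{-1}\le|\ln\eta|^{-1/2}$ and $\|\cdot\|_{L^2(\Omega)}\le C\|\cdot\|_{L^p(\Omega)}$ because $\Omega$ is bounded and $p>2$, choosing $\phi=r_\e$ produces exactly \eqref{rate-2}. The only real subtlety is tracking the two logarithmic factors coming from $\chi_{\e,\eta}-1$ versus from $\nabla\chi_{\e,\eta}$ so that the correct single power $|\ln\eta|^{-1/2}$ survives after the Poincar\'e inequality is applied; there is no further obstacle beyond bookkeeping.
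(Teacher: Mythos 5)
Your proposal is correct and follows essentially the same route as the paper: your ``two-dimensional analogue of Lemma \ref{GRAD-2}'' is exactly the paper's Lemma \ref{GRADD2-2} (note that $V_\e\equiv 2\pi$ when $d=2$, so $\sigma_\e^{-2}V_\e\phi$ and $2\pi\sigma_\e^{-2}\phi$ agree), and the rest of your argument — integration by parts, H\"older with $\tfrac12=\tfrac1p+\tfrac1q$, Lemma \ref{LPD2-2}, the Poincar\'e inequality $\|\phi\|_{L^2}\le C\sigma_\e\|\nabla\phi\|_{L^2}$, and choosing $\phi=r_\e$ — reproduces the paper's proof step for step. The bookkeeping via the identity $\e^{-1}|\ln\eta|^{-1}=\sigma_\e^{-1}|\ln\eta|^{-1/2}$ is carried out correctly and matches the paper's statement.
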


The following lemma plays the role of Lemma \ref{GRAD-2}.

\begin{lemma}\label{GRADD2-2}  
Let $\chi_{\e, \eta} $ be the corrector constructed in Section \ref{section-C2}.
Then, for any  $\phi \in H^1_0(\Omega_{\e, \eta} )$,
\begin{equation}\label{est-2c3}
\left| \int_{\Omega_{\e, \eta}} \nabla \chi_{\e, \eta} \cdot  \nabla \phi -
 2\pi \sigma_\e^{-2} \int_{\Omega_{\e, \eta}}  \phi \right| 
\leq \frac{C}{\e |\ln \eta|}
 \int_{\Omega_{\e, \eta}}|\nabla \phi|.
\end{equation}
\end{lemma}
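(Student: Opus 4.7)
The plan is to mirror the proof of Lemma~\ref{GRAD-2}, using the per-cube estimate from Lemma~\ref{lemma-2c} in place of Lemma~\ref{lemma-3c}. Extend $\phi$ to $\R^d$ by zero, and decompose
\[
\Omega_{\e,\eta} = \Omega'_\e \cup \mathcal{F},
\]
where $\Omega'_\e$ is the union of cubes $\e Q_z$ contained in $\Omega$ and $\mathcal{F}$ is the union of boundary cubes (those intersecting $\partial\Omega$). Then split
\[
\int_{\Omega_{\e,\eta}} \nabla \chi_{\e,\eta}\cdot\nabla\phi - 2\pi\sigma_\e^{-2}\!\int_{\Omega_{\e,\eta}} \phi
= \sum_{\e Q_z\subset\Omega}\!\Bigl(\!\int_{\e Q_z}\!\nabla\chi_{\e,\eta}\cdot\nabla\phi - 2\pi\sigma_\e^{-2}\!\int_{\e Q_z}\phi\Bigr) - 2\pi\sigma_\e^{-2}\!\int_{\mathcal{F}}\phi,
\]
where the boundary contribution from $\nabla\chi_{\e,\eta}$ vanishes because $\chi_{\e,\eta}\equiv 1$ on each boundary cube, by construction in Section~\ref{section-C2}.

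For the interior sum, I apply Lemma~\ref{lemma-2c} cube by cube; each term is bounded by $C\e^{-1}|\ln\eta|^{-1}\int_{\e Q_z}|\nabla\phi|$, and summing over all $\e Q_z\subset\Omega$ yields a contribution controlled by $C\e^{-1}|\ln\eta|^{-1}\int_{\Omega_{\e,\eta}}|\nabla\phi|$, which is the bound desired.

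The remaining and more delicate piece is the boundary term $2\pi\sigma_\e^{-2}\int_{\mathcal{F}}|\phi|$. Since $\phi\in H_0^1(\Omega_{\e,\eta})$ vanishes on $\partial\Omega$ and $\mathcal{F}$ lies in a $C\e$-neighborhood of $\partial\Omega$, a Poincar\'e-type inequality along normals to $\partial\Omega$ gives $\int_{\mathcal{F}}|\phi|\le C\e\int_{\Omega_{\e,\eta}}|\nabla\phi|$. Combined with $\sigma_\e^{-2}=\e^{-2}|\ln\eta|^{-1}$, this produces
\[
2\pi\sigma_\e^{-2}\int_{\mathcal{F}}|\phi| \le C\e^{-1}|\ln\eta|^{-1}\int_{\Omega_{\e,\eta}}|\nabla\phi|,
\]
matching the target bound. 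The main obstacle is simply ensuring the right scaling in $\e$ and $|\ln\eta|$ from the boundary estimate; the factor of $\sigma_\e^{-2}=\e^{-2}|\ln\eta|^{-1}$ is exactly absorbed by the $\e$ gained from Poincar\'e across the thin strip $\mathcal{F}$, so no logarithmic loss arises there. Adding the interior and boundary contributions yields \eqref{est-2c3}.
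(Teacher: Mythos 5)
Your proposal is correct and follows essentially the same route as the paper: extend $\phi$ by zero, split into interior cubes (handled by Lemma \ref{lemma-2c}, noting $\nabla\chi_{\e,\eta}\equiv 0$ on boundary cubes since $\chi_{\e,\eta}\equiv 1$ there) and the boundary strip $\mathcal{F}$ (handled by a Poincar\'e inequality using $\phi=0$ on $\partial\Omega$, with $\sigma_\e^{-2}\cdot\e=\e^{-1}|\ln\eta|^{-1}$ giving the matching bound). Your explicit remark that the $\nabla\chi_{\e,\eta}$ contribution vanishes on $\mathcal{F}$ is a point the paper leaves implicit, but otherwise the arguments coincide.
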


\begin{proof}

The proof is similar to that of Lemma \ref{GRAD-2}.
    Let $\phi \in H_0^1(\Omega_{\e, \eta})$ and extend it to $\mathbb{R}^2$ by zero.
    The left-hand side of \eqref{est-2c3} is bounded by
    \begin{equation}\label{2c31}
    \sum_{z} \left|
    \int_{\e Q_z} \nabla \chi_{\e, \eta} \cdot \nabla \phi 
    -2\pi \sigma_\e^{-2} \int_{\e Q_z} \phi \right|
    +\left| \int_{\mathcal{F}}  2\pi \sigma_\e^{-2}  \phi \right|,
    \end{equation}
    where the sum is taken over those $z$'s for which $\e Q_z\subset \Omega$ and
    $\mathcal{F}$ is the union of cubes $\e Q_z$ with non-empty intersections with $\partial \Omega$.
    As in the proof of Lemma \ref{GRAD-2}, the second term in \eqref{2c31} can be handled
    readily by a Poincar\'e inequality, since $\phi=0$ on $\partial \Omega$.
     For the first term we use Lemma \ref{lemma-2c} to bound it by the right-hand side of \eqref{est-2c3}.
      \end{proof}

\begin{proof}[Proof of Theorem \ref{versionD2-2}]
Let $u_\e$ be a solution of \eqref{NH-2} and $u_{\e, 0}$  a solution of  \eqref{Shro-2}.
  Assume further that $u_{0,\e} \in W^{1, p}(\Omega)$. 
  Let  $r_\e = u_\e - \chi_{\e, \eta}  u_{0,\e} $.
    As in the proof of Theorem \ref{cor2-2}, 
   for $\phi\in H^1_0(\Omega_{\e, \eta})$,  we have 
\begin{equation*}
\begin{aligned}
\int_{\Omega_{\e, \eta}} \nabla r_\e \cdot \nabla \phi 
&=  - \int_{\Omega_{\e, \eta}} \nabla \chi_{\e, \eta}  \cdot \nabla (u_{0,\e}\phi)
+  2 \pi \sigma_\e^{-2} \int_{\Omega_{\e, \eta}}  u_{0,\e} \phi \\
 &\qquad - 2\int_{\Omega_{\e, \eta}} (\chi_{\e, \eta}  -1) 
\left( \nabla u_{0,\e} \cdot \nabla \phi \right) 
 -2\pi \sigma_\e^{-2} 
\int_{\Omega_{\e, \eta} } (\chi_{\e, \eta}  -1) u_{0,\e}  \phi.
    \end{aligned}
\end{equation*}
By applying  Lemma \ref{GRADD2-2} we obtain 
\begin{equation*}
\begin{aligned}
\left| \int_{\Omega_{\e, \eta}} \nabla r_\e \cdot \nabla \phi \right|
&\leq \frac{C}{\e |\ln \eta|}  \int_{\Omega_{\e, \eta}} |\nabla (u_{0,\e} \phi)|
   +  2 \int_{\Omega_{\e, \eta} } | \chi_{\e, \eta}  -1|  | \nabla u_{0,\e}|  |\nabla \phi |\\
&   \qquad + 2\pi  \sigma_\e^{-2} \int_{\Omega_{\e, \eta} } | \chi_{\e, \eta} -1|  |  u_{0,\e}|   | \phi|\\
&=I.
   \end{aligned}
\end{equation*}
Next, 
by H\"older's inequality, 
\begin{equation*}
    \begin{aligned} 
   I
 &\leq C\sigma_\e^{-1} | \ln \eta|^{-\frac{1}{2}}\left(\| \nabla \phi\| _{L^2(\Omega_{\e, \eta})}\| u_{0,\e}\|_{L^2(\Omega_{\e, \eta})} 
 +\|  \phi \|_{L^2(\Omega_{\e, \eta})}\| \nabla u_{0,\e}\|_{L^2(\Omega_{\e, \eta})}\right) \\
 &\quad  +C\| \chi_{\e, \eta}  -1 \| _{L^q(\Omega_{\e, \eta})} \left(\| \nabla \phi\|_{L^2(\Omega_{\e, \eta})} \|
 \nabla u_{0,\e}\|_{L^p(\Omega_{\e, \eta})}  +
  \sigma_\e^{-2} \|  \phi\|_{L^2(\Omega_{\e, \eta})} \| u_{0,\e}\|_{L^p(\Omega_{\e, \eta})}\right),
    \end{aligned}
\end{equation*}
where $\frac{1}{2} = \frac{1}{p} + \frac{1}{q}$. By applying the Poincar\'e inequality
$ \| \phi \|_{L^2(\Omega_{\e, \eta}) }\le C \sigma_ \e  \|\nabla \phi \|_{L^2(\Omega_{\e, \eta})}$ and Lemma \ref{LPD2-2}, we obtain 
\begin{equation*}
\begin{aligned}
  I
 \le C  |\ln \eta|^{-\frac12} 
\|\nabla \phi \|_{L^2(\Omega_{\e, \eta})}
\left(\| \nabla u_{0,\e}\|_{L^p(\Omega)}  
+ \sigma_\e^{-1}\| u_{0,\e}\|_{L^p(\Omega)}\right).
\end{aligned}
\end{equation*}
\medskip
By choosing $\phi= r_\e$, we obtain \eqref{rate-2}.
\end{proof}

 
 \section{An Intermediate Problem}\label{section-I}
 
  In  this section we consider the boundary value problem for a Schr\"odinger operator,
 \begin{equation}\label{IVP}
 \left\{
 \aligned
 -\Delta u  +\lambda^2  V(x)  u  &=  F & \quad & \text{ in } \Omega, \\
 u& =g & \quad & \text{ on } \partial \Omega,
 \endaligned
 \right.
 \end{equation}
 where $\lambda>0$ and  $V=V(x)$ is a potential satisfying the condition $0< \mu_0\le V \le \mu_1$.
 
 \begin{lemma}\label{lemma-S1}
 Let $\Omega$ be a bounded Lipschitz  domain in $\mathbb{R}^d$ and $\lambda>0$.
 Suppose that  $u\in H^1(B(x_0,2 r) \cap \Omega)$ and 
 \begin{equation}\label{S-10}
 \left\{
 \aligned
 -\Delta u +\lambda^2 V(x)  u &  = 0 & \quad & \text{ in }  B(x_0, 2r) \cap \Omega, \\
 u& =0 & \quad & \text{ on }  B(x_0,2 r) \cap \partial \Omega,
 \endaligned
 \right.
 \end{equation}
 where $x_0\in \partial\Omega$ and $0<r< r_0$.
 Then
 \begin{equation}\label{S-11}
 \sup_{B(x_0, r)\cap \Omega} |u|
 \le C \left( \fint_{B(x_0,2 r) \cap \Omega} |u|^2 \right)^{1/2},
 \end{equation}
 where $C$ depends on $d$, $\Omega$ and $(\mu_0, \mu_1)$.
 Moreover, 
   \begin{equation}\label{S-12}
 \left(\fint_{B(x_0, r)\cap \Omega} |\nabla u|^p \right)^{1/p}
 \le C \left( \fint_{B(x_0, 2r) \cap \Omega} |\nabla u|^2 \right)^{1/2},
 \end{equation}
where $2< p< 3+\delta$ for $d\ge 3$,  $2< p< 4+ \delta$ for $d=2$, 
and  $\delta>0$ depends on $d$ and $\Omega$.
 The constant  $C$ in  \eqref{S-12}  depends on $d$, $p$, $\Omega$ and $(\mu_0, \mu_1)$.
 If $\Omega$ is a bounded $C^1$ domain, the estimate \eqref{S-12} holds for any $2< p< \infty$.
 \end{lemma}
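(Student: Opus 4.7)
The plan is to treat the two bounds separately, in both cases exploiting the favorable sign of the zero-order term $\lambda^2 V \ge 0$. To keep constants independent of $\lambda$, I first reduce to the base case $\lambda = 1$ via the scaling $y = \lambda x$, $\tilde u(y) = u(y/\lambda)$: the equation transforms into $-\Delta \tilde u + \tilde V \tilde u = 0$ with $\tilde V(y) = V(y/\lambda)$ still satisfying $\mu_0 \le \tilde V \le \mu_1$ on the rescaled domain $\lambda \Omega$, which has the same local Lipschitz or $C^1$ character as $\Omega$ (the chart scale $r_0$ being replaced by $\lambda r_0$). A change of variables shows that both \eqref{S-11} and \eqref{S-12} are scale-invariant, so it suffices to prove them at $\lambda = 1$ with a constant depending only on $(d, p, \mu_0, \mu_1)$ and the boundary regularity.

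For \eqref{S-11} I would use a subharmonic extension argument. Split $u = u^+ - u^-$ and treat $u^+$. By Kato's inequality, $\Delta u^+ \ge \chi_{\{u > 0\}} \Delta u = \lambda^2 V u^+ \ge 0$ in $B(x_0, 2r) \cap \Omega$, so $u^+$ is nonnegative and subharmonic there; since $u^+$ vanishes on $B(x_0, 2r) \cap \partial\Omega$ in the trace sense, the zero extension $(u^+)_{\mathrm{ext}}$ is subharmonic in the full ball $B(x_0, 2r)$ in the weak sense. The mean value inequality for nonnegative subharmonic functions yields $\sup_{B(x_0, r)} u^+ \le C \bigl(\fint_{B(x_0, 2r) \cap \Omega} (u^+)^2\bigr)^{1/2}$ with $C$ depending only on $d$. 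The symmetric argument for $u^-$ gives \eqref{S-11}.

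For \eqref{S-12} the strategy is harmonic approximation at an intermediate scale. Fix $\rho \in (r, 2r)$, set $D = B(x_0, \rho) \cap \Omega$, and let $v \in H^1(D)$ solve $\Delta v = 0$ in $D$ with $v = u$ on $\partial D$; since $u$ vanishes on $B(x_0, 2r) \cap \partial\Omega$, so does $v$ on $\partial\Omega \cap B(x_0, \rho)$. The difference $w = u - v \in H_0^1(D)$ satisfies $-\Delta w = -\lambda^2 V u$. For $v$, the classical boundary $W^{1,p}$ theory for harmonic functions with zero Dirichlet data on the lateral portion, due to Dahlberg-Kenig-Jerison in the Lipschitz case and to layer-potential/Calder\'on-Zygmund theory in the $C^1$ case, supplies the reverse H\"older bound
\[
\left(\fint_{B(x_0, r) \cap \Omega} |\nabla v|^p\right)^{1/p} \le C \left(\fint_D |\nabla v|^2\right)^{1/2}
\]
in precisely the stated range of $p$. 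For $w$, the $W^{1,p}$ solvability of the inhomogeneous Dirichlet problem in the same class of domains, combined with the Sobolev embedding $W^{2, p_*}_0 \hookrightarrow W^{1, p}_0$ (with $p_*^{-1} = p^{-1} + d^{-1}$), yields $\|\nabla w\|_{L^p(D)} \le C \|\lambda^2 V u\|_{L^{p_*}(D)}$. The $\lambda^2$ prefactor is absorbed using the Caccioppoli bound $\lambda \|u\|_{L^2(D)} \le C r^{-1} \|u\|_{L^2(B(x_0, 2r) \cap \Omega)}$, followed by Poincar\'e on the Lipschitz boundary portion and, when $p_* < 2$, by interpolation against the sup estimate from Part 1. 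Assembling $\|\nabla u\|_{L^p} \le \|\nabla v\|_{L^p} + \|\nabla w\|_{L^p}$ together with $\|\nabla v\|_{L^2(D)} \le \|\nabla u\|_{L^2(D)} + \|\nabla w\|_{L^2(D)}$ closes the estimate.

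The main obstacle will be verifying the full $\lambda$-uniformity of the constant in \eqref{S-12}: the $\lambda^2$ factor from the $L^{p_*} \to L^p$ bound on $\nabla w$ must be fully absorbed by the PDE-driven smallness of $u$ supplied by Caccioppoli. This is straightforward in the perturbative regime $\lambda r \lesssim 1$, but in the regime $\lambda r \gg 1$ the solution decays exponentially away from $\partial\Omega$, and the argument really relies on the scaling reduction to $\lambda = 1$, where the boundary $W^{1,p}$ estimates for $-\Delta + V$ follow from the harmonic theory by a standard zero-order perturbation.
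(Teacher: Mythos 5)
Your Kato-inequality argument for \eqref{S-11} is correct and is genuinely different from the paper's, which instead iterates a Caccioppoli bound to obtain a decay estimate of the form
\begin{equation*}
\left(\fint_{B(x_0,tr)\cap\Omega}u^2\right)^{1/2}\le\frac{C_{k,t,s}}{(1+\lambda r)^{k}}\left(\fint_{B(x_0,sr)\cap\Omega}u^2\right)^{1/2}
\end{equation*}
for all $k\ge 1$, and then bootstraps through the boundary $L^\infty$ estimate for the Poisson equation $\Delta u=\lambda^2 Vu$, using the $(1+\lambda r)^{-k}$ decay to absorb the $\lambda^2 r^2$ prefactor. Your subharmonic/zero-extension route gets the $\lambda$-uniformity directly from the sign $V\ge 0$; it is cleaner for this part. (Two technical points worth stating explicitly: Kato's inequality applies since $\Delta u=\lambda^2 Vu\in L^2_{\mathrm{loc}}$, and the zero extension of a nonnegative subharmonic $H^1$ function vanishing on $\partial\Omega\cap B(x_0,2r)$ is subharmonic in $B(x_0,2r)$ because $\partial\Omega$ is Lipschitz.)

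For \eqref{S-12} your argument has a genuine gap. After the splitting $u=v+w$ with $v$ harmonic in $D$ and $-\Delta w=-\lambda^2 Vu$, $w\in W^{1,p}_0(D)$, your chain of inequalities gives, upon unwinding, roughly
\begin{equation*}
\left(\fint_{B(x_0,r)\cap\Omega}|\nabla w|^p\right)^{1/p}
\;\lesssim\; r^{-d/p}\,\lambda^2\,\|u\|_{L^{p_*}(D)}
\;\lesssim\;\lambda r\left(\fint_{B(x_0,2r)\cap\Omega}|\nabla u|^2\right)^{1/2},
\end{equation*}
where I have applied H\"older ($p_*\le 2$), a single Caccioppoli, and Poincar\'e. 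A single Caccioppoli step produces only one factor of $(\lambda r)^{-1}$, whereas the estimate $\|\nabla w\|_{L^p}\lesssim\lambda^2\|u\|_{L^{p_*}}$ carries a $(\lambda r)^{2}$ prefactor on scale $r$; the net $\lambda r$ factor is left unabsorbed. Your fallback, namely rescale to $\lambda=1$ and treat $V$ as a zero-order perturbation, does not close this: after rescaling the regime $\lambda r\gg 1$ becomes the regime of large radii $\rho=\lambda r\gg 1$ in which \eqref{S-12} must hold uniformly, and there the zero-order term is not a perturbatively small correction to the Laplacian (its contribution to the reverse H\"older inequality at scale $\rho$ grows like $\rho^2$). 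What is missing is precisely the iterated Caccioppoli bound \eqref{Qt-3} of the paper: by iterating over a chain of nested annuli one obtains decay $(1+\lambda r)^{-k}$ for arbitrary $k$, which overpowers the fixed power $(\lambda r)^{2}$; what survives is $r^{-1}\|u\|_{L^2}$, which Poincar\'e converts to $\|\nabla u\|_{L^2}$. This iteration is short, but it is the key ingredient you would need to add. (Your observation that $\|\nabla v\|_{L^2(D)}\le\|\nabla u\|_{L^2(D)}$ by energy minimization is correct and the harmonic approximation framework is otherwise sound.)
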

 
 \begin{proof}
 
 Let $\phi \in C_0^\infty(B(x_0, 2r)) $.
 Then
$$
\int_{B(x_0, 2r)  \cap \Omega   } \nabla u \cdot \nabla (u \phi^2) + \int_{B(x_0, 2r) \cap \Omega } \lambda^2 V  u \cdot u\phi^2 = 0.
$$
It follows  by Cauchy's inequality that
$$
\int_{B(x_0, 2r)\cap \Omega } |\nabla u|^2  \phi^2 
+ \int_{B(x_0, 2r) \cap \Omega  } \lambda^2 V u^2 \phi^2 \leq  C \int_{B(x_0, 2r) \cap \Omega  } |u|^2 |\nabla \phi|^2.
 $$
By choosing a suitable  cut-off function $\phi$, we see that 
$$ 
\lambda^2 \int_{B(x_0, tr)\cap \Omega}    u^2 \leq \frac{C}{r^2 (s-t)^2} \int_{B(x_0, sr)\cap \Omega } u^2
$$
for  $ 1\le  t < s\le 2 $.
This implies that 
\begin{equation}\label{iter-3}
    \int_{B(x_0, tr)\cap \Omega } u^2
    \leq \frac{C}{ 1+ (s-t)^2 \lambda^2 r^2  }\int_{B(x_0, sr)\cap \Omega  } u^2.
\end{equation}
 By iterating \eqref{iter-3} $k$ times we obtain 
 \begin{equation}\label{Qt-3}
     \left( \fint_{ B(x_0, tr)\cap \Omega }  u^2\right)^{1/2} \leq \frac{C_{k, t, s} }{(1+ \lambda r)^{2k}} 
     \left( \int_{ B(x_0, sr) \cap \Omega  }  u^2\right)^{1/2} 
 \end{equation}
 for any $k \ge 1$, where $1\le  t< s\le 2$.
 
Now, since 
$$
 \Delta u  = \lambda^2 V u \ \ \ \text{in} \  B(x_0, 2r)\cap \Omega 
 \quad 
 \text{ and } \quad u =0 \quad \text{ on } B(x_0, 2r) \cap \partial \Omega,
$$
the boundary $L^\infty$  estimates for  Laplace's equation in Lipschitz  domains  give
\begin{equation*}
\begin{aligned}
\left(\fint_{B(x_0, tr) \cap \Omega } |u|^p\right)^{1/p} 
&\leq  C_{t, s}  \left(\fint_{B(x_0, sr)\cap \Omega }| u|^2  \right)^{1/2} + C_{t, s} r^2\lambda^2 \left(\fint_{B(x_0, sr)\cap \Omega } | u|^q \right)^{1/q},
\end{aligned}
\end{equation*}
 where $ 0< \frac{1}{q} -\frac{1}{p} < \frac{2}{d}$, and
 \begin{equation*}
\begin{aligned}
\sup_{B(x_0, tr)\cap \Omega} |u|
&\leq  C_{t, s}  \left(\fint_{B(x_0, sr)\cap \Omega }| u|^2  \right)^{1/2} + C_{t, s} r^2\lambda^2 \left(\fint_{B(x_0, sr)\cap \Omega } | u|^q \right)^{1/q},
\end{aligned}
\end{equation*}
where $q>(d/2)$.
 These two estimates, together with \eqref{Qt-3},  yield \eqref{S-11} by a bootstrapping argument.

 Note that the $W^{1, p}$ estimate for Laplace's equation in $C^1$ domains \cite{MR1331981} gives
$$
\left( \fint_{B(x_0, ts) \cap \Omega } |\nabla u|^p \right)^{1/p} 
\leq C_{t, s}  \left(\fint_{B(x_0, sr)\cap \Omega } |\nabla u|^2  \right)^{1/2} 
+ C_{t, s} r \lambda^2 \left( \fint_{B(x_0, sr) \cap \Omega  } |u|^q \right)^{1/q},
 $$
where $1<t<s<2$ and  $0< \frac{1}{q}-\frac{1}{p} < \frac{1}{d} $. 
If $\Omega$ is a Lipschitz domain,
we need to impose the additional  conditions that 
$2<p< 3+\delta$ for $d\ge 3$, and $2<p< 4+\delta$ for $d=2$, where $\delta>0$ depends on $d$ and $\Omega$.
Thus,  by \eqref{Qt-3} and  \eqref{S-11}, 
\begin{equation}\label{nabu-3}
\begin{aligned}
\left( \fint_{B(x_0, r) \cap \Omega } |\nabla u|^p \right)^{1/p} 
&\leq C  \left(\fint_{B(x_0, 2r) \cap \Omega } |\nabla u|^2  \right)^{1/2} 
+ Cr^{-1}  \left(\fint_{B(x_0, 2r)\cap \Omega  } |u|^2 \right)^{1/2} \\ 
 &\leq  C \left(\fint_{B(x_0, 2r) \cap \Omega } |\nabla u|^2  \right)^{1/2}, 
 \end{aligned}
 \end{equation}
  where we have used a Poincar\'e inequality and the fact that $u=0$ on $B(x_0, 2r) \cap \partial \Omega$ for the last step.
\end{proof}

\begin{remark}\label{re-S1}
Suppose that $u \in H^1(B(x_0, 2r))$ and
$-\Delta u + \lambda^2 V u =0$ in $B(x_0, 2r)$. Then
 \begin{equation}\label{S-11a}
 \sup_{B(x_0, r)} |u|
 \le C \left( \fint_{B(x_0,2 r)}   |u|^2 \right)^{1/2},
 \end{equation}
   \begin{equation}\label{S-12a}
 \sup_{B(x_0, r) } |\nabla u|
 \le C \left( \fint_{B(x_0, 2r) } (  |\nabla u|^2 + \lambda^2 |u|^2 )  \right)^{1/2},
 \end{equation}
where $C$ depends on $d$ and $(\mu_0, \mu_1)$.
The proof is similar to that of Lemma \ref{lemma-S1}.
\end{remark}

An operator $T$ is called sublinear if there exists $K\ge 1$ such that
\begin{equation}\label{sublinear}
| T(f+g)| \le K \left( | T(f)| + |T(g)|\right).
\end{equation}
The  following theorem was proved in \cite{shenbounds}.

\begin{thm}\label{RV1-3}
Let $\Omega$ be a bounded Lipschitz domain in $\mathbb{R}^d$. Let $T$ be a bounded sublinear operator on $L^2(\Omega)$ with  
\begin{equation}\label{L2L2-3}
 \| T\|_{L^2 \rightarrow L^2} \leq C_0.
 \end{equation}
Let $q > 2 $. Suppose that 
\begin{equation}\label{RHT-3}
\left( \fint_{\Omega \cap B(x_0, r) } |T(f)|^q \right)^{1/q} \leq N  \left( \fint_{\Omega \cap B(x_0, 2r) } |T(f)|^2 \right) ^{1/2}
\end{equation}
for any ball $B(x_0, r)  $ with the property that $0< r< r_0$ 
and either $B(x_0, 4r) \subset \Omega $ or $x_0 \in \partial \Omega$ and 
for any $ f \in C^\infty_0(\Omega) $ with supp$(f) \subset \Omega \backslash B(x_0, 4r) $.
 Then for any $F \in L^p (\Omega) $,
$$ 
\| T(F)\|_{L^p(\Omega)} \leq C  \| F \|_{L^p(\Omega)},
 $$
where $2 < p < q$ and $C$ depends  at most  on $d, p,q,C_0,N, r_0,  \Omega $ and $K $.
\end{thm}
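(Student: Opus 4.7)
The plan is a Calder\'on--Zygmund / good-$\lambda$ extrapolation on the Lipschitz domain $\Omega$, which is precisely the real-variable technique from the cited reference. Fix $2<p<q$ and $F\in L^p(\Omega)$; the aim is $\|T(F)\|_{L^p(\Omega)}\le C\|F\|_{L^p(\Omega)}$. I would set up the stopping-time covering first: for $\lambda>0$ and the localized Hardy--Littlewood maximal operator $M$ on $\Omega$, consider
\begin{equation*}
E_\lambda=\{x\in\Omega:M(|T(F)|^2)(x)>\lambda^2\}.
\end{equation*}
A Whitney-type lemma adapted to $\Omega$ produces balls $\{B_i=B(x_i,r_i)\}$ with $r_i\le r_0$, bounded overlap, $E_\lambda\subset\bigcup B_i$, and satisfying both the dichotomy (either $B(x_i,4r_i)\subset\Omega$ or $x_i\in\partial\Omega$) and the selection property that $2B_i\cap\Omega$ intersects $\Omega\setminus E_\lambda$.

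Next I would localize on each ball. By sublinearity applied to $F=F\chi_{4B_i\cap\Omega}+F\chi_{\Omega\setminus 4B_i}=:g_i+h_i$, the function $T(F)$ is pointwise dominated on $B_i\cap\Omega$ by $K(|T(g_i)|+|T(h_i)|)$. The $L^2$ bound \eqref{L2L2-3} controls $g_i$:
\begin{equation*}
\int_{B_i\cap\Omega}|T(g_i)|^2\le C_0^2\int_{4B_i\cap\Omega}|F|^2\le C|B_i|\,M(|F|^2)(z)
\end{equation*}
for any $z$ in the selection set $2B_i\cap\Omega\setminus E_\lambda$. The reverse H\"older hypothesis \eqref{RHT-3} applies to $h_i$, whose support is disjoint from $B(x_i,4r_i)$; using sublinearity again to write $T(h_i)$ in terms of $T(F)$ and $T(g_i)$, it yields
\begin{equation*}
\Bigl(\fint_{B_i\cap\Omega}|T(h_i)|^q\Bigr)^{1/q}\le N\Bigl(\fint_{2B_i\cap\Omega}|T(h_i)|^2\Bigr)^{1/2}\le C(K,N)\bigl(\lambda+M(|F|^2)(z)^{1/2}\bigr),
\end{equation*}
since $2B_i\cap\Omega$ contains the point $z$ at which $M(|T(F)|^2)\le\lambda^2$ and the $L^2$ norm of $T(g_i)$ is absorbed by the maximal function of $|F|^2$.

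Combining these two estimates with Chebyshev's inequality at level $(A\lambda)^2$ produces the good-$\lambda$ inequality
\begin{equation*}
\bigl|\{x\in\Omega:M(|T(F)|^2)(x)>(A\lambda)^2,\ M(|F|^2)(x)\le(\delta\lambda)^2\}\bigr|\le\theta(A,\delta)\,|E_\lambda|,
\end{equation*}
with $\theta(A,\delta)\to 0$ as $A\to\infty$ and $\delta\to 0$, thanks to $q>2$ and the bounded overlap of the $B_i$. Multiplying by $p\lambda^{p-1}$, integrating in $\lambda\in(0,\infty)$, and using the $L^p$-boundedness of $M$ for $p>2$, one absorbs the output on the left-hand side and concludes $\|T(F)\|_{L^p(\Omega)}\le C\|F\|_{L^p(\Omega)}$ with $C=C(d,p,q,C_0,N,r_0,\Omega,K)$.

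\textbf{Main obstacle.} The delicate step is constructing the Whitney decomposition on the Lipschitz domain so that the interior/boundary dichotomy required by \eqref{RHT-3} holds simultaneously with the selection property $2B_i\cap(\Omega\setminus E_\lambda)\ne\emptyset$, with bounded overlap and ball radii capped by $r_0$. A related point of bookkeeping is the sublinearity constant $K$, which enters multiplicatively both when $T(F)$ is split into local and far parts on $B_i$ and again when $T(h_i)$ is recovered from $T(F)$ and $T(g_i)$; one must ensure that the resulting constants remain admissible so that $\theta(A,\delta)<1$ can be achieved independently of $\lambda$ and $F$.
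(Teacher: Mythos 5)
The paper does not prove Theorem \ref{RV1-3}; it cites \cite{shenbounds}, and the Calder\'on--Zygmund/good-$\lambda$ extrapolation you sketch is precisely the method used there. Your outline is correct — Whitney covering of the super-level set $E_\lambda$ with the interior/boundary dichotomy, local/far splitting of $F$ against the $L^2$ bound and the reverse H\"older hypothesis, a good-$\lambda$ inequality with $\theta(A,\delta)$ small because $2<p<q$, and integration in $\lambda$ — and you have rightly flagged the dichotomy-respecting covering and the $K$-bookkeeping as the technical crux.
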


\begin{thm}\label{corint-3}
 Let $\Omega$ be a bounded Lipschitz   domain in $\mathbb{R}^d$. 
  Let $u\in W^{1, 2}_0 (\Omega)$ be a solution of 
 \begin{equation}\label{shrorhs-3}
     \left\{
     \aligned
    -\Delta u +  \lambda^2  V(x)   u   & = F & \quad & \text{ in }   \Omega, \\ 
    u  & =0 &  \quad & \text{ on }  \partial \Omega,
\endaligned
\right.
\end{equation}
where $F \in L^2(\Omega)$. Suppose $F\in L^p(\Omega)$, where $2<p< 3+\delta$ for $d\ge 3$, and 
$2< p< 4+\delta $ for $d=2$.
Then
\begin{equation}\label{S2-0}
 \lambda \| \nabla u \|_{L^p(\Omega)} + \lambda^ 2  \| u \|_{L^p(\Omega)} \leq C \| F \|_{L^p(\Omega)}, 
\end{equation}
where $C$ depends on $d$, $p$, $\Omega$ and $(\mu_0, \mu_1)$.
If $\Omega$ is a bounded $C^1$ domain, the estimate \eqref{S2-0} holds for any $2<p< \infty$.
\end{thm}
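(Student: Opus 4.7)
The plan is to apply the real-variable method of Theorem \ref{RV1-3} to the sublinear operator
\begin{equation*}
T(F)(x) = \lambda |\nabla u(x)| + \lambda^2 |u(x)|,
\end{equation*}
where $u \in W^{1,2}_0(\Omega)$ is the unique weak solution of \eqref{shrorhs-3} with right-hand side $F$. Since the map $F \mapsto u$ is linear, $T$ is sublinear with constant $K=1$. If we can verify that $T$ is bounded on $L^2(\Omega)$ uniformly in $\lambda$ and satisfies the reverse H\"older inequality \eqref{RHT-3} on balls centered in $\overline{\Omega}$, then Theorem \ref{RV1-3} immediately yields $\|T(F)\|_{L^p(\Omega)} \le C\|F\|_{L^p(\Omega)}$, which is exactly \eqref{S2-0}.

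The $L^2$ bound is the standard energy estimate: testing \eqref{shrorhs-3} against $u$ and using $V \ge \mu_0$ gives $\lambda^2 \|u\|_{L^2(\Omega)} \lesssim \|F\|_{L^2(\Omega)}$ and, in turn, $\lambda \|\nabla u\|_{L^2(\Omega)} \lesssim \|F\|_{L^2(\Omega)}$, so $\|T(F)\|_{L^2(\Omega)} \le C \|F\|_{L^2(\Omega)}$ with $C$ independent of $\lambda$. For the reverse H\"older bound, fix a ball $B(x_0, r)$ with $0<r<r_0$ and $F \in C_0^\infty(\Omega)$ supported in $\Omega \setminus B(x_0, 4r)$; then $u$ satisfies $-\Delta u + \lambda^2 V u = 0$ on $B(x_0, 4r) \cap \Omega$, and in the boundary case also vanishes on $B(x_0, 4r) \cap \partial \Omega$. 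By Lemma \ref{lemma-S1} in the boundary case and Remark \ref{re-S1} in the interior case, we obtain
\begin{equation*}
\left(\fint_{B(x_0, r) \cap \Omega} |\nabla u|^p\right)^{1/p} \le C \left(\fint_{B(x_0, 2r) \cap \Omega} |\nabla u|^2\right)^{1/2}
\end{equation*}
for $p$ in the allowed range, and simultaneously $\sup_{B(x_0, r) \cap \Omega} |u| \le C \bigl(\fint_{B(x_0, 2r) \cap \Omega} |u|^2\bigr)^{1/2}$ from \eqref{S-11} (or \eqref{S-11a} in the interior). Multiplying these by $\lambda$ and $\lambda^2$ respectively, and using that $|T(F)|^2$ is pointwise comparable to $\lambda^2|\nabla u|^2 + \lambda^4 |u|^2$, we conclude
\begin{equation*}
\left(\fint_{B(x_0, r) \cap \Omega} |T(F)|^p\right)^{1/p} \le C \left(\fint_{B(x_0, 2r) \cap \Omega} |T(F)|^2\right)^{1/2}.
\end{equation*}
This is the reverse H\"older hypothesis \eqref{RHT-3} with $q=p$, and Theorem \ref{RV1-3} then delivers the desired $L^p$ estimate.

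The main obstacle is encoded in the range of $p$. The boundary reverse H\"older bound on $\nabla u$ provided by Lemma \ref{lemma-S1} is only available for $p < 3+\delta$ when $d\ge 3$ (respectively $p < 4+\delta$ when $d=2$) in the Lipschitz setting, and this limitation is what propagates into the restriction on $p$ in \eqref{S2-0}. When $\Omega$ is $C^1$, the same lemma gives the reverse H\"older inequality for every $p>2$, so the identical argument automatically yields \eqref{S2-0} for all $2<p<\infty$. Ensuring that the constants produced by the Moser-type iteration and by the boundary $W^{1,p}$ step in Lemma \ref{lemma-S1} are uniform in $\lambda$, which that lemma already guarantees, is what makes the scheme produce the sharp $\lambda$-scaling $\lambda \|\nabla u\|_{L^p} + \lambda^2 \|u\|_{L^p}$ on the left-hand side without any loss.
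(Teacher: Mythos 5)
Your proof is correct and follows essentially the same approach as the paper's: apply the real-variable method of Theorem \ref{RV1-3} to the sublinear operator $T_\lambda(F)=\lambda|\nabla u|+\lambda^2|u|$, obtaining the $L^2$ bound from the energy estimate and the reverse H\"older condition from Lemma \ref{lemma-S1} and Remark \ref{re-S1}. One small point: Theorem \ref{RV1-3} yields the $L^p$ conclusion only for $2<p<q$, so you should take the reverse H\"older exponent $q$ strictly larger than the target $p$ (which the open range in Lemma \ref{lemma-S1} permits), rather than setting $q=p$.
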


\begin{proof}

To prove  \eqref{S2-0},  we consider the sublinear operator,
    $$
    T_\lambda  (F)  = \lambda  |\nabla u| + \lambda^2 |u|,
    $$
where $u$ is a solution of \eqref{shrorhs-3},     and apply Theorem \ref{RV1-3}. 
    Note that by the energy estimate, $\| T_\lambda  \|_{L^2(\Omega) \to L^2(\Omega)} \le C$,
    where $C$ depends on $d$, $\Omega$ and $(\mu_0, \mu_1)$.
    To verify the condition \eqref{RHT-3},
     let $f \in C_0^\infty (\Omega)$ such that $f=0$ in  $B(x_0, 2r)$, where $0< r< r_0$ and either $x_0\in \partial \Omega$ or 
     $B(x_0, 4r)\subset \Omega$.
     Let $w$ be the solution of \eqref{shrorhs-3} with $f$ in the place of $F$. Then $-\Delta w + \lambda^2 V w =0$ in $B(x_0, 4r) \cap \Omega$ and
     $w=0$ on $B(x_0, 4r) \cap \partial \Omega$ (if $x_0 \in \partial \Omega$).
     If $\Omega$ is a bounded $C^1$ domain, 
     it follows from Lemma \ref{lemma-S1} and Remark \ref{re-S1} that 
     \begin{equation}\label{S2-1}
     \left(\fint_{B(x_0, r)\cap \Omega} ( \lambda |\nabla w| + \lambda^2  |w|  )^q \right)^{1/q}
     \le C \left(\fint_{B(x_0, 2r) \cap \Omega}
     ( \lambda |\nabla w |+ \lambda^2  | w|)^2 \right)^{1/2}
     \end{equation}
for any $2< q< \infty$,
     where $C$ depends on $d$, $q$, $\Omega$, and $(\mu_0, \mu_1)$.
     As a result, by Theorem \ref{RV1-3}, we obtain 
     $ \| T_\lambda (F) \|_{L^p (\Omega)} \le C \| F \|_{L^p (\Omega)}$ for any $2< p< \infty$.
     If $\Omega$ is a bounded Lipschitz domain, 
     the estimate \eqref{S2-1} holds for $2<q<3+\delta$  if $d\ge 3$, and for $2<q<4+\delta$ if $d=2$.
     As a result, we obtain \eqref{S2-0} for $2< p< 3+\delta$ if $d\ge 3$, and for $2< p< 4+\delta$ if $d=2$,
     where $\delta>0$ depends on $d$ and $\Omega$.
\end{proof}

\begin{thm}\label{extshro-3}
    Let $\Omega$ be a bounded Lipschitz domain in $\mathbb{R}^d$ with connected boundary.
      Let  $u \in W^{1,2}(\Omega)$ be a solution of 
$$
\left\{
\aligned
-\Delta u + \lambda^2 V(x) u  &  = 0 & \quad &  \text{ in } \Omega,  \\
u  & = g & \quad & \text{ on } \partial \Omega,
\endaligned
\right.
$$
where $g \in W^{1, 2}(\partial \Omega)$.
Then
\begin{equation}\label{S3-0}
\left\{
\aligned
\| u \|_{L^p(\Omega)}  & \le C \| g \|_{L^2(\partial \Omega)},\\
\| \nabla u\|_{L^p(\Omega)}   
 & \leq C \left\{ \| \nabla_t  g\| _{L^2(\partial \Omega)} + \lambda  \| g\| _{L^2(\partial \Omega) } \right\}, 
\endaligned
\right.
\end{equation}
where $p = \frac{2d}{d-1}$, $\nabla_t g$ denotes the tangential gradient of $g$ on $\partial\Omega$,
and $C$ depends on $d$, $\Omega$ and $(\mu_0, \mu_1)$.
\end{thm}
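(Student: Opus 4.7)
The plan is to decompose $u = v + w$, where $v$ is the harmonic extension of $g$ to $\Omega$ (i.e., $-\Delta v = 0$ in $\Omega$, $v = g$ on $\partial\Omega$), and $w \in W_0^{1,2}(\Omega)$ solves the inhomogeneous Schr\"odinger problem
\begin{equation*}
-\Delta w + \lambda^2 V(x) w = -\lambda^2 V(x) v \quad \text{in } \Omega, \qquad w = 0 \quad \text{on } \partial\Omega.
\end{equation*}
Since $-\Delta u + \lambda^2 V u = 0$ and $-\Delta v = 0$ with matching boundary values, this decomposition is consistent, and by uniqueness $u = v + w$. The task then reduces to (i) controlling the harmonic piece $v$ by the boundary data and (ii) applying Theorem \ref{corint-3} to the correction $w$.

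For step (i), we invoke the classical $L^p$ estimates for harmonic functions in bounded Lipschitz domains with connected boundary (Dahlberg, Jerison--Kenig, Verchota). Writing $p = \frac{2d}{d-1}$, these give
\begin{equation*}
\|v\|_{L^p(\Omega)} \le C\|g\|_{L^2(\partial\Omega)}, \qquad \|\nabla v\|_{L^p(\Omega)} \le C\|\nabla_t g\|_{L^2(\partial\Omega)}.
\end{equation*}
The first inequality follows, for instance, from the bound $\|v\|_{H^{1/2}(\Omega)} \le C\|g\|_{L^2(\partial\Omega)}$ combined with the Sobolev embedding $H^{1/2}(\Omega) \hookrightarrow L^{2d/(d-1)}(\Omega)$; the second uses $\|\nabla v\|_{H^{1/2}(\Omega)} \le C\|\nabla_t g\|_{L^2(\partial\Omega)}$ (Dirichlet regularity in $H^{3/2}(\Omega)$ for Lipschitz domains with $W^{1,2}$ data) and the same Sobolev embedding.

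For step (ii), the key observation is that for our choice $p = \frac{2d}{d-1}$ we have $p \le 3$ when $d\ge 3$ and $p = 4$ when $d = 2$, so $p$ lies strictly below the range $3+\delta$ (resp.\ $4+\delta$) in which Theorem \ref{corint-3} applies in Lipschitz domains. Since $0 < \mu_0 \le V \le \mu_1$, the source $F = -\lambda^2 V v$ lies in $L^p(\Omega)$ with $\|F\|_{L^p(\Omega)} \le C\lambda^2 \|v\|_{L^p(\Omega)}$, and Theorem \ref{corint-3} yields
\begin{equation*}
\lambda\|\nabla w\|_{L^p(\Omega)} + \lambda^2 \|w\|_{L^p(\Omega)} \le C\lambda^2 \|v\|_{L^p(\Omega)} \le C\lambda^2 \|g\|_{L^2(\partial\Omega)}.
\end{equation*}
In particular $\|w\|_{L^p(\Omega)} \le C\|g\|_{L^2(\partial\Omega)}$ and $\|\nabla w\|_{L^p(\Omega)} \le C\lambda\|g\|_{L^2(\partial\Omega)}$, and adding the corresponding bounds for $v$ gives \eqref{S3-0}.

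The main technical obstacle is step (i): the harmonic $L^p$ bounds at the Sobolev exponent $p = \frac{2d}{d-1}$ for merely Lipschitz domains are nontrivial and rely on boundary layer potential theory and the regularity of the Dirichlet problem in Lipschitz domains. Once these are in hand as a black box, the Schr\"odinger correction is routine because the $\lambda$-weighted estimate from Theorem \ref{corint-3} absorbs exactly the factor of $\lambda^2$ coming from the source $-\lambda^2 V v$, producing the claimed scaling $\|\nabla u\|_{L^p} \lesssim \|\nabla_t g\|_{L^2(\partial\Omega)} + \lambda\|g\|_{L^2(\partial\Omega)}$.
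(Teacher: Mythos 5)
Your proof has the same structure as the paper's: decompose $u$ as a harmonic extension of $g$ plus a correction $w$ with zero boundary data, bound the harmonic piece by classical Lipschitz-domain theory, and apply Theorem~\ref{corint-3} to absorb the $\lambda^2$ in the source $-\lambda^2 V G$. The only difference is in how the harmonic piece is controlled: the paper invokes the nontangential maximal function estimates for the Dirichlet problem ($\|(G)^*\|_{L^2(\partial\Omega)} \le C\|g\|_{L^2(\partial\Omega)}$, Dahlberg) and for the regularity problem ($\|(\nabla G)^*\|_{L^2(\partial\Omega)} \le C\|\nabla_t g\|_{L^2(\partial\Omega)}$, Verchota), together with the elementary embedding $\|w\|_{L^{2d/(d-1)}(\Omega)} \le C\|(w)^*\|_{L^2(\partial\Omega)}$. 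You instead go through the claims $\|v\|_{H^{1/2}(\Omega)} \le C\|g\|_{L^2(\partial\Omega)}$ and $\|\nabla v\|_{H^{1/2}(\Omega)} \le C\|\nabla_t g\|_{L^2(\partial\Omega)}$ and then apply the Sobolev embedding $H^{1/2}(\Omega) \hookrightarrow L^{2d/(d-1)}(\Omega)$. These are ultimately the same circle of results (Dahlberg, Verchota, Jerison--Kenig), so the idea is right; but be aware that the second of your regularity claims, $H^{3/2}(\Omega)$-regularity of the Dirichlet problem with $W^{1,2}(\partial\Omega)$ data on a merely Lipschitz domain, is slightly delicate to cite precisely (the Jerison--Kenig Besov-scale results take data in $B^{1}_{2,2}(\partial\Omega)$, not $W^{1,2}(\partial\Omega)$, and reconciling the two requires the square-function estimates attached to the regularity problem). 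The paper's formulation via nontangential maximal functions sidesteps this subtlety, since the embedding $\|w\|_{L^p(\Omega)} \le C\|(w)^*\|_{L^2(\partial\Omega)}$ is an elementary geometric fact. Apart from this cosmetic choice of black box, your argument matches the paper's.
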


\begin{proof}
    We start by solving the Dirichlet problem, 
\begin{equation*}
\begin{cases}
\begin{alignedat}{2}
    -\Delta G &= 0  \qquad &&\text{in} \ \Omega,\\
    G &= g  \qquad &&\text{on} \ \partial \Omega.
\end{alignedat}
\end{cases} 
\end{equation*}
By the well known nontangential-maximal-function estimates for harmonic functions in Lipschitz domains (see e.g. \cite{verchota1984layer}),
$$
\| ( \nabla G)^*  ||_{L^2(\partial \Omega )} \leq C 
\| \nabla_t g \|_{L^2(\partial \Omega) }  \quad  \text{ and } \quad  \| ( G)^* \|_{L^2(\Omega) } \leq \| g\| _{L^2(\partial \Omega)},
$$
where $\nabla_t g$ denotes the tangential  gradient of $g$ on $\partial \Omega$.
It follows that 
$$
\aligned
& \| \nabla G \|_{L^p(\Omega)}
\le C \| (\nabla G)^* \|_{L^2(\partial \Omega)}
\le C \| \nabla_t g \|_{L^2(\partial \Omega)}, \\
& \| G \|_{L^p(\Omega)}
\le C \| ( G)^* \|_{L^2(\partial \Omega)}
\le C \|g \|_{L^2(\partial \Omega)}, \\
\endaligned
$$
where $p=\frac{2d}{d-1}$ and we have used the inequality $\| w \|_{L^p(\Omega)} \le C \| (w)^* \|_{L^2(\partial \Omega)}$
for functions $w$ in $\Omega$.

Next, let  $v = u-G$.
 Then $v$ satisfies the following equation,
$$
 \left\{
 \aligned
    -\Delta v + \lambda^2 V(x) v  & = F  & \quad &  \text{ in } \Omega, \\ 
    v  & =0 &\quad &  \text{ on }  \partial \Omega,
\endaligned
\right.
$$
where $F  = - \lambda^2  V(x) G $. 
Let $p=\frac{2d}{d-1}$.
Note that $p=4$ for $d=2$, $p=3$ for $d=3$, and $p<3$ for $d\ge 4$.
In view of Theorem  \ref{corint-3} we have 
\begin{equation*}
\lambda^2  \| u-G\| _{L^p(\Omega)} 
\leq C\|  F \| _{L^p(\Omega)} \leq C \lambda^2  \| G\|_{L^p(\Omega)}.
\end{equation*}
This implies 
\begin{equation}\label{f1-3}
\| u\|_{L^p(\Omega)} \leq C\| G\|_{L^p(\Omega)} \leq C\| g\| _{L^2(\partial \Omega)}  
\end{equation}
for $p = \frac{2d}{d-1}$.
Moreover, by Theorem  \ref{corint-3},
$$
 \lambda \| \nabla(u-G)\| _{L^p(\Omega)} 
 \leq C \| \lambda^2  V  G \|_{L^p(\Omega)}.
 $$
It follows that
\begin{equation}\label{f2-3}
    \begin{aligned}
        \| \nabla u\|_{L^p(\Omega)} 
        &\leq \| \nabla G\|_{L^p(\Omega)} 
        + C \lambda  \| G\|_{L^p(\Omega)} \\ 
        &\leq C  \| \nabla_t g\| _{L^2(\partial \Omega)} 
        + C \lambda \| g\| _{L^2(\partial \Omega)}
          \end{aligned}
\end{equation}
for $p = \frac{2d}{d-1}$. 
\end{proof}

 
\section{Large-scale $W^{1, p}$ estimates }\label{section-L}

Let $u_{\e}$ be a solution of 
\begin{equation}\label{BVP-L}
\left\{
\aligned
-\Delta u_\e & =F +\text{\rm div} (f) & \quad & \text{ in } \Omega_{\e, \eta},\\
u_\e & =0 & \quad & \text{ on } \partial \Omega_{\e, \eta}, 
\endaligned
\right.
\end{equation}
where $\Omega_{\e, \eta}$  is given by \eqref{O-e}.
Let 
\begin{equation*}
S_\e (F, f)=\left(\fint_{Q(x, 2\e) } |\nabla u_\e|^2 \right)^{1/2},
\end{equation*}
where we have extended $u_\e$ to $\mathbb{R}^d$ by zero.
Note that  
$$
\| S_\e (F, f) \|_{L^2(\mathbb{R}^d)} = \| \nabla u_\e \|_{L^2(\Omega_{\e, \eta})}.
$$
It follows that 
\begin{equation}\label{L-1}
\|S_\e (F, f) \|_{L^2(\mathbb{R}^d)}\le 
 \| f \|_{L^2(\Omega_{\e, \eta})} +  C \min (\sigma_\e, 1) \| F \|_{L^2(\Omega_{\e, \eta})}.
\end{equation}
This section is dedicated to the $L^p$ estimates for $S_\e(F,f)$ in Theorem \ref{thm-S0}.

\begin{thm}\label{thm-L1}
Let $\Omega$ be a bounded $C^1$ domain in $\R^d$ and $\Omega_{\e, \eta} $ be given by \eqref{O-e}.
Then,
\begin{equation}\label{Large-e}
\| S_\e (0,f)\|_{L^p(\Omega)}
\le C \| f \|_{L^p(\Omega)}
\end{equation}
 for $2< p< \infty$,
where $C$ depends only on $d$, $p$,  $\Omega$, and $\{Y_z^s\}$.
\end{thm}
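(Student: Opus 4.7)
The plan is to apply the real-variable framework of Theorem \ref{RV1-3} to the sublinear operator $T(f) := S_\e(0, f)$. By \eqref{L-1}, $T$ is bounded on $L^2(\Omega)$, so it suffices to establish the reverse H\"older inequality
\begin{equation*}
\left(\fint_{B(x_0, r) \cap \Omega} |S_\e(0, f)|^q \right)^{1/q} \le N \left(\fint_{B(x_0, 2r) \cap \Omega} |S_\e(0, f)|^2\right)^{1/2}
\end{equation*}
for some $q > 2$, all $0 < r < r_0$, and every $f \in C_0^\infty(\Omega)$ with $\text{supp}(f) \subset \Omega \setminus B(x_0, 4r)$, where either $B(x_0, 4r) \subset \Omega$ or $x_0 \in \partial \Omega$. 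The regime $r \le C_0 \e$ is handled directly: since $S_\e(0, f)(y)^2 = \fint_{Q(y, 2\e)} |\nabla u_\e|^2$ is itself a local average, all cubes $Q(y, 2\e)$ with $y \in B(x_0, r)$ sit in a common cube of size $O(\e)$, and a Fubini comparison yields the inequality with the constant $N$ depending only on $d$.

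The substantive case is $r \ge C_0\e$. I would select, by averaging in the radial variable, a Lipschitz subdomain $D$ with $B(x_0, 2r) \cap \Omega \subset D \subset B(x_0, 3r) \cap \Omega$ on whose boundary the trace of $u_\e$ is well controlled, and let $v_\e \in H^1(D)$ solve the intermediate problem
\begin{equation*}
-\Delta v_\e + \sigma_\e^{-2} V_\e v_\e = 0 \ \text{ in }\ D, \qquad v_\e = u_\e \ \text{ on }\ \partial D.
\end{equation*}
Since $\text{supp}(f) \subset \Omega \setminus B(x_0, 4r)$, $u_\e$ is $\Delta$-harmonic on $D \cap \Omega_{\e, \eta}$, and Theorems \ref{cor2-2} and \ref{versionD2-2}, whose proofs localize to $D$ without change, give for $p = 2d/(d-1)$
\begin{equation*}
\|\nabla r_\e\|_{L^2(D \cap \Omega_{\e, \eta})} \le C \bigl(\sigma_\e^{-1}\|v_\e\|_{L^p(D)} + \|\nabla v_\e\|_{L^p(D)}\bigr), \qquad r_\e := u_\e - \chi_{\e, \eta} v_\e,
\end{equation*}
while Theorem \ref{extshro-3} dominates the right-hand side by $\|\nabla_t u_\e\|_{L^2(\partial D)} + \sigma_\e^{-1}\|u_\e\|_{L^2(\partial D)}$.

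To convert this into a reverse H\"older bound for $S_\e$, write $u_\e = \chi_{\e, \eta} v_\e + r_\e$ and use the product rule, the cell-wise estimates $\fint_{\e Q_z}|\nabla \chi_{\e, \eta}|^2 \le C\sigma_\e^{-2}$ from Lemmas \ref{LPGRAD-2} and \ref{LPGRADD2-2}, and the interior $L^\infty$-estimates of Remark \ref{re-S1} applied to $v_\e$, to obtain the pointwise bound
\begin{equation*}
S_\e(0, f)^2(x) \le C\, \mathcal{M}_\e\bigl(|\nabla v_\e|^2 + \sigma_\e^{-2}|v_\e|^2\bigr)(x) + C\, \mathcal{M}_\e(|\nabla r_\e|^2)(x),
\end{equation*}
where $\mathcal{M}_\e$ is a localized maximal operator at scale $\e$. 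Taking $L^{p/2}$-averages over $B(x_0, r)$ and invoking the $L^{p/2}$-boundedness of $\mathcal{M}_\e$, the first term passes through Theorem \ref{extshro-3} to boundary norms of $u_\e$ on $\partial D$, which are converted to $(\fint_{B(x_0, 2r) \cap \Omega} S_\e^2)^{1/2}$ by a trace/Caccioppoli estimate on $u_\e$ combined with the Poincar\'e inequality $\|u_\e\|_{L^2} \le C\sigma_\e\|\nabla u_\e\|_{L^2}$ of Lemma \ref{Lemma-P} (valid because $u_\e$ vanishes on all holes and on $\partial \Omega$) and Fubini. The $r_\e$ term is handled by H\"older's inequality together with the $L^2$ convergence bound above, landing on the same quantity.

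The main obstacle I expect is the exponent restriction imposed by Theorem \ref{extshro-3}, which in its stated form yields only $p = 2d/(d-1)$; this gives the reverse H\"older inequality only with $q = 2d/(d-1)$, and hence Theorem \ref{thm-L1} only for $2 < p < 2d/(d-1)$. To reach all $2 < p < \infty$ one iterates: the improved $L^{p_0}$ boundedness of $S_\e$ upgrades the $L^2$ trace estimates on $\partial D$ to $L^{q_0}$ with $q_0 > 2$, which, via the higher-$p$ interior estimates of Theorem \ref{corint-3} (valid for every exponent on $C^1$ domains), produces $L^{p_1}(D)$ bounds on $v_\e$ with $p_1 > p_0$, and repeating this step drives the admissible $q$ to infinity. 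Carrying out this bookkeeping (in particular tracking the contribution of $r_\e$ to the $L^q$-average uniformly in $\e$) is the most technical portion of the argument.
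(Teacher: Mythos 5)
Your broad strategy—apply Theorem \ref{RV1-3} to the sublinear operator $S_\e(0,\cdot)$, dispose of the regime $r\lesssim\e$ trivially, and for $r\gtrsim\e$ approximate $u_\e$ by $\chi_{\e,\eta}w$ with $w$ solving the intermediate Schr\"odinger problem on a well-chosen slice—is the correct one and matches the paper. However, the passage from the approximation to the reverse H\"older inequality contains a genuine gap. You write the pointwise bound
\[
S_\e(0,f)^2(x)\le C\,\mathcal{M}_\e\bigl(|\nabla v_\e|^2+\sigma_\e^{-2}|v_\e|^2\bigr)(x)+C\,\mathcal{M}_\e\bigl(|\nabla r_\e|^2\bigr)(x),
\]
and then propose to take $L^{p/2}$-averages and to control the $r_\e$-term ``by H\"older's inequality together with the $L^2$ convergence bound.'' This cannot work: for $p>2$ the $L^{p/2}$-average of $\mathcal{M}_\e(|\nabla r_\e|^2)$ is not dominated by its $L^1$-average, which is the only thing the $L^2$ estimate on $\nabla r_\e$ provides; H\"older's inequality runs in the opposite direction. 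The smallness factor $\psi(\eta)$ attached to $\|\nabla r_\e\|_{L^2}$ does not rescue this, since it multiplies a quantity in the wrong norm.

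The paper circumvents this by applying the real-variable method a \emph{second} time. Instead of establishing the reverse H\"older estimate \eqref{L-2} directly, it applies Theorem \ref{RV-L} (interior) and Theorem \ref{RV2-3} (boundary) to the function $v(x)=(\fint_{Q(x,2\e)}|\nabla u_\e|^2)^{1/2}$ itself: at every intermediate scale $r$ one decomposes $v\le|R_Q|+|F_Q|$ with $R_Q$ coming from $\chi_{\e,\eta}w$ and $F_Q$ from the remainder $r_\e$; the crucial inputs are the $L^\infty$ bound \eqref{L-1-2} on $R_Q$ (obtained from Remark~\ref{re-S1} and the corrector gradient bounds) and the $\delta=C\psi(\eta)$-smallness of $F_Q$ in $L^2$. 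Theorems \ref{RV-L}/\ref{RV2-3} are precisely designed to turn such a ``good approximant plus $L^2$-small error at all scales'' decomposition into the desired reverse H\"older estimate, and this is the key ingredient your proposal lacks. Related to this, your worry about the exponent restriction $p=2d/(d-1)$ from Theorem~\ref{extshro-3} and the proposed bootstrap is misdirected: because the good part is controlled in $L^\infty$ via Remark~\ref{re-S1} and Lemmas~\ref{LPGRAD-2}/\ref{LPGRADD2-2}, no exponent improvement is needed, and the second-layer real-variable theorem yields the reverse H\"older bound for every $q<\infty$ in one pass.
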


In the following we will assume $0< \eta< \eta_0$, where $\eta_0>0$ is sufficiently small. 
The case $\eta\ge \eta_0$  with any fixed $\eta_0>0$ is the same as the case $\eta=1$, which is  contained in \cite{MR2111721}.
To prove Theorem \ref{thm-L1},  we use Theorem \ref{RV1-3}. The $L^2$ boundedness condition \eqref{L2L2-3} is given by \eqref{L-1}.
To verify the reverse H\"older condition \eqref{RHT-3}, it suffices to show that if $\Delta u_\e =0$ in $Q(x_0, 4R) \cap \Omega_{\e, \eta}$ and
$u_\e =0 $ on $Q(x_0, 4R) \cap \partial \Omega_{\e, \eta}$, where  $0<R< c_0$ and
either $Q(x_0, 4R)\subset \Omega$ or $x_0 \in \partial \Omega$, then
\begin{equation}\label{L-2}
\left(\fint_{Q(x_0, R)\cap \Omega} \left(\fint_{Q(x, 2\e)} |\nabla u_\e|^2 \right)^{p/2}  dx \right)^{1/p}
\le C \left(\fint_{Q(x_0, 2R)\cap \Omega}
\left( \fint_{Q(x, 2\e)} |\nabla u_\e|^2 \right) dx  \right)^{1/2}
\end{equation}
for $p>2$.
It is  not hard to see that
\eqref{L-2} holds for $0< R< C \e$.
The proof of \eqref{L-2} for the large-scale  case $R\ge C \e$ uses a real-variable argument and relies on  the estimates
from the previous two sections.

We begin with the interior case $Q(x_0, 4R) \subset \Omega$.
The following theorem was proved in \cite[Theorem 3.2]{MR2353255}. 

\begin{thm}\label{RV-L}
  Let $F \in L^2(Q(x_0, 2R) ) $ and $2<p< q$. 
  Suppose that for each cube $Q=Q(y, r)$ with $y\in Q(x_0, R)$ and $0<r<c_0 R $,
  there exists two functions $R_Q$ and $F_Q$ such that
  \begin{equation}\label{L-R1}
  |F| \le |R_Q| + |F_Q| \quad \text{ in } 2Q,
  \end{equation}
  \begin{equation}\label{L-R2}
  \left( \fint_{2Q} |R_Q|^q\right)^{1/q}
  \le N \left( \fint_{8Q} |F|^2 \right)^{1/2},
  \end{equation}
  \begin{equation}\label{L-R3}
  \left(\fint_{2Q} |F_Q|^2 \right)^{1/2}
  \le \delta
  \left(\fint_{8Q} |F|^2 \right)^{1/2},
  \end{equation}
          where $N > 1$ and $0 < c_0 < 1 $. 
          Then there exists $\delta_0 > 0 $,  depending only on $d, N,  c_0, p, q$,  with the property that if $0 \leq \delta < \delta_0 $,
           then $F \in L^p(Q(x_0, R)) $ and 
    \begin{equation}\label{L-R4}
    \left(\fint_{Q(x_0, R)} |F|^p \right)^{1/p}
    \le C \left(\fint_{Q(x_0, 2R)} |F|^2 \right)^{1/2},
        \end{equation}
    where $C $ depends at most on $d, c_0, p, q, N$. 
\end{thm}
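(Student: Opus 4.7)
The plan is to use a Calderón–Zygmund stopping-time decomposition of $G := |F|^2\chi_{Q(x_0,2R)}$ combined with a good-$\lambda$ inequality for the dyadic maximal operator $\mathcal{M}$ on $Q(x_0,2R)$, in the spirit of Caffarelli–Peral. Set $E_\lambda := \{x \in Q(x_0,2R) : \mathcal{M}G(x) > \lambda\}$. By Lebesgue differentiation, $|F|^2 \le \mathcal{M}G$ almost everywhere, so
\begin{equation*}
\int_{Q(x_0,R)}|F|^p\,dx \le \int_{Q(x_0,2R)}(\mathcal{M}G)^{p/2}\,dx = \tfrac{p}{2}\int_0^\infty \lambda^{p/2-1}|E_\lambda|\,d\lambda,
\end{equation*}
and it suffices to bound the right-hand side by $C\bigl[\fint_{Q(x_0,2R)}|F|^2\bigr]^{p/2}|Q(x_0,R)|$.

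The heart of the proof is the good-$\lambda$ inequality: for a large constant $A$ to be fixed and for all $\lambda \ge \lambda_0 := C_0\fint_{Q(x_0,2R)}|F|^2$,
\begin{equation}\label{good-lambda}
|E_{A\lambda}| \le \bigl[C_1 N^q A^{-q/2} + C_1 \delta^2 A^{-1}\bigr]\,|E_\lambda|.
\end{equation}
To prove \eqref{good-lambda}, apply the Calderón–Zygmund decomposition to $G$ at level $\lambda$ inside $Q(x_0,2R)$, producing a disjoint family of maximal dyadic stopping subcubes $\{Q_k\}$ with $\lambda < \fint_{Q_k}G \le 2^d\lambda$ and with $\fint_{\widetilde Q}G \le \lambda$ on any strict dyadic ancestor $\widetilde Q$. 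Choosing $\lambda_0$ large and, if necessary, shrinking $c_0$, one forces $r_k < c_0 R$ and $8Q_k \subset Q(x_0,2R)$, so the structural hypothesis applies with $Q = Q_k$. On $2Q_k$ one has $|F|^2 \le 2|R_{Q_k}|^2 + 2|F_{Q_k}|^2$, and the standard localization property of the dyadic maximal function—the average of $G$ over any dyadic cube strictly containing $Q_k$ is at most $\lambda$—reduces estimating $E_{A\lambda}\cap Q_k$ to estimating the maximal functions of $|R_{Q_k}|^2\chi_{2Q_k}$ and $|F_{Q_k}|^2\chi_{2Q_k}$ at level $cA\lambda$. Using weak-$L^{q/2}$ boundedness of $\mathcal{M}$ together with \eqref{L-R2} and the control $\fint_{8Q_k}G \le C\lambda$ coming from stopping,
\begin{equation*}
|\{x : \mathcal{M}(|R_{Q_k}|^2\chi_{2Q_k})(x) > cA\lambda\}| \le C(A\lambda)^{-q/2}\!\int_{2Q_k}|R_{Q_k}|^q \le CN^q A^{-q/2}|Q_k|,
\end{equation*}
while weak-$(1,1)$ with \eqref{L-R3} gives the analogous $C\delta^2 A^{-1}|Q_k|$ bound. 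Summing over $k$ and using $\sum_k |Q_k| = |\{\mathcal{M}G > \lambda\}| \le |E_\lambda|$ yields \eqref{good-lambda}.

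Inserting \eqref{good-lambda} into the layer-cake integral and changing variables $\mu = A\lambda$,
\begin{equation*}
\int_{A\lambda_0}^\infty \mu^{p/2-1}|E_\mu|\,d\mu \le \bigl[CN^q A^{(p-q)/2} + C\delta^2 A^{p/2-1}\bigr]\int_{\lambda_0}^\infty \lambda^{p/2-1}|E_\lambda|\,d\lambda.
\end{equation*}
Since $p<q$, first fix $A = A(d,N,p,q)$ so that $CN^q A^{(p-q)/2} < \tfrac14$, then choose $\delta_0 = \delta_0(d,N,p,q,A)$ so that $C\delta_0^2 A^{p/2-1} < \tfrac14$; the bracketed coefficient is then $<\tfrac12$. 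The complementary piece $\int_0^{A\lambda_0}\lambda^{p/2-1}|E_\lambda|\,d\lambda$ is trivially bounded by $C\lambda_0^{p/2}|Q(x_0,2R)|$, and absorbing the tail into the left-hand side yields the desired estimate \eqref{L-R4}. The principal obstacle is precisely this absorption step, which is legitimate only if $\int_0^\infty \lambda^{p/2-1}|E_\lambda|\,d\lambda$ is already known to be finite; this is handled in the customary way by replacing $\mathcal{M}G$ with its truncation $(\mathcal{M}G)\wedge K$, running the argument to obtain a bound independent of $K$, and then sending $K\to\infty$ by monotone convergence. The remaining bookkeeping—matching the dilates $2Q_k$, $8Q_k$ of the hypothesis to dyadic parents of the stopping cubes, and passing between the dyadic and centered maximal functions—is routine and ultimately fixes the constants $c_0, C_0, C_1$.
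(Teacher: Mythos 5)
The paper does not prove Theorem \ref{RV-L}; it simply cites \cite[Theorem 3.2]{MR2353255}, and the argument given there is exactly the Calder\'on--Zygmund/good-$\lambda$ scheme you use, so your proposal reproduces the intended proof and is correct in substance. One small caution on the bookkeeping you defer: since the hypothesis only supplies $R_Q$ and $F_Q$ for cubes centered in $Q(x_0,R)$, the stopping-time decomposition should be performed on (dyadic subcubes of) $Q(x_0,R)$, with $Q(x_0,2R)$ serving only as the ambient cube for the localized maximal operator and for the dilates $8Q_k$ --- decomposing all of $Q(x_0,2R)$ as written would produce stopping cubes near $\partial Q(x_0,2R)$ to which the hypothesis does not apply; with that adjustment the choice ``first $A$ large using $p<q$, then $\delta_0$ small,'' together with the truncation of $\mathcal{M}G$ to justify the absorption, is exactly right.
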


We use $Q_{\e, \eta} (x_0, r)$ to denote $\Omega_{\e, \eta}$, where $\Omega=Q(x_0, r)$.
We point out that  in general, $ Q_{\e, \eta}(x_0, r) 
\neq    Q(x_0, r)\cap \Omega_{\e, \eta} $, as there is no hole near the boundary  of $Q(x_0, r) $ in 
$Q_{\e, \eta} (x_0, r)$.
However, under the assumption $r\ge C \e$, it is possible to find $Q(y_0, t)$ such that 
$Q(x_0, r) \subset Q(y_0, t)\subset Q(x_0, 2r)$ and   $Q_{\e, \eta} (y_0, t)=\Omega_{\e, \eta} \cap Q(y_0, t)$.
This observation is used in the proof of the following lemma.

\begin{lemma}\label{lemma-L1}
Let $u \in H^1(Q(x_0, 4r))$, where  $r\ge 8\e$ and $Q(x_0, 8r)\subset \Omega$. 
Suppose that $\Delta u=0$ in $Q(x_0, 4r) \cap \Omega_{\e, \eta} $ and
$u =0 $ on $Q(x_0, 4r)\setminus  \Omega_{\e, \eta}$.
Then there exists $v\in H^1(Q(x_0, 2r))$ such that
\begin{equation}\label{L-1-1}
\left(\fint_{Q(x_0, r)} |\nabla (u-v)|^2\right)^{1/2}
\le C \psi   (\eta)
\left(\fint_{Q(x_0, 3r)} |\nabla u |^2 \right)^{1/2},
\end{equation}
\begin{equation}\label{L-1-2}
\max_{x\in Q(x_0, r)} 
\left(\fint_{Q(x, 2\e) } |\nabla v|^2 \right)^{1/2} 
\le C \left(\fint_{Q(x_0, 3r)} |\nabla u|^2 \right)^{1/2},
\end{equation}
where  $\psi (\eta)=\eta^{1/2} $ for $d\ge 3$ and $\psi(\eta) =|\ln \eta|^{-1/2}$ for $d=2$.
The constant $C$ depends only on $d$ and $\{ Y_z^s\} $.
\end{lemma}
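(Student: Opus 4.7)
The plan is to approximate $u$ by $\chi_{\e,\eta}v_\e$, where $\chi_{\e,\eta}$ is the corrector from Section~\ref{section-C1} (or Section~\ref{section-C2} when $d=2$) and $v_\e$ solves the intermediate Schr\"odinger problem on a cube containing $Q(x_0,2r)$ that is aligned with the $\e$-grid. First, since $r\ge 8\e$, I will choose $y_0\in\R^d$ close to $x_0$ and $t\in[2r,5r/2]$ so that $Q(x_0,2r)\subset Q(y_0,t)\subset Q(x_0,3r)$ and $\partial Q(y_0,t)$ cuts no hole, giving $Q_{\e,\eta}(y_0,t)=Q(y_0,t)\cap\Omega_{\e,\eta}$. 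There are $O(r/\e)$ admissible values of $t$, and a co-area/pigeonhole argument combined with Lemma~\ref{lemma-P0} applied cell-by-cell to $u$ (which vanishes on every hole) lets one pick $t$ so that
\begin{equation*}
\int_{\partial Q(y_0,t)}|\nabla u|^2\,dS\le\frac{C}{r}\int_{Q(x_0,3r)}|\nabla u|^2,\qquad \int_{\partial Q(y_0,t)}|u|^2\,dS\le\frac{C\sigma_\e^2}{r}\int_{Q(x_0,3r)}|\nabla u|^2.
\end{equation*}

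Next, let $v_\e\in H^1(Q(y_0,t))$ solve $(-\Delta+\sigma_\e^{-2}V_\e)v_\e=0$ in $Q(y_0,t)$ with $v_\e=u$ on $\partial Q(y_0,t)$, where $u$ is extended to $Q(y_0,t)$ by zero on the holes (which keeps it in $H^1$). Set $v=\chi_{\e,\eta}v_\e$ on $Q(x_0,2r)$. By construction $v=v_\e=u$ on $\partial Q(y_0,t)$ (since $\chi_{\e,\eta}=1$ away from holes) and $v=0$ on each hole, so $u-v\in H_0^1(Q_{\e,\eta}(y_0,t))$.

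To prove \eqref{L-1-1}, I would apply Theorem~\ref{cor2-2} (for $d\ge 3$) or Theorem~\ref{versionD2-2} (for $d=2$) on $Q(y_0,t)$ with $p_0=2d/(d-1)$. For $d=3$, $p_0=d$ and the first case of Theorem~\ref{cor2-2} yields rate $\eta^{(d-2)/2}=\eta^{1/2}$; for $d\ge 4$, $p_0<d$ and $1/q=1/2-1/p_0=1/(2d)$, so the second case yields $\eta^{d/q}=\eta^{1/2}$. In all cases the rate matches $\psi(\eta)$, giving
\begin{equation*}
\|\nabla(u-v)\|_{L^2(Q_{\e,\eta}(y_0,t))}\le C\psi(\eta)\bigl(\sigma_\e^{-1}\|v_\e\|_{L^{p_0}(Q(y_0,t))}+\|\nabla v_\e\|_{L^{p_0}(Q(y_0,t))}\bigr).
\end{equation*}
After rescaling $Q(y_0,t)$ to a unit cube so the constants become scale-invariant, Theorem~\ref{extshro-3} bounds the right-hand side by $C\bigl(\|\nabla_t u\|_{L^2(\partial Q(y_0,t))}+\sigma_\e^{-1}\|u\|_{L^2(\partial Q(y_0,t))}\bigr)$. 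Plugging in the boundary estimates above yields $\|\nabla(u-v)\|_{L^2(Q(x_0,r))}\le C\psi(\eta)\|\nabla u\|_{L^2(Q(x_0,3r))}$, which is \eqref{L-1-1} upon dividing by $|Q(x_0,r)|^{1/2}$.

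For \eqref{L-1-2}, expand $\nabla v=(\nabla\chi_{\e,\eta})v_\e+\chi_{\e,\eta}\nabla v_\e$. Since $|\chi_{\e,\eta}|\le 1$ and Lemmas~\ref{LPGRAD-2}--\ref{LPGRADD2-2} with $p=2$ give $\fint_{\e Q_z}|\nabla\chi_{\e,\eta}|^2\le C\sigma_\e^{-2}$ on each cell, for $x\in Q(x_0,r)$
\begin{equation*}
\fint_{Q(x,2\e)}|\nabla v|^2\le C\sigma_\e^{-2}\sup_{Q(x,2\e)}|v_\e|^2+C\sup_{Q(x,2\e)}|\nabla v_\e|^2.
\end{equation*}
Since $B(x,r/4)\subset Q(y_0,t)$ and $v_\e$ satisfies the Schr\"odinger equation there, Remark~\ref{re-S1} with $\lambda=\sigma_\e^{-1}$ bounds the right-hand side by $Cr^{-d}\int_{Q(y_0,t)}\bigl(|\nabla v_\e|^2+\sigma_\e^{-2}V_\e|v_\e|^2\bigr)$. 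Finally, the energy identity for $v_\e$ tested against $v_\e-u\in H_0^1(Q(y_0,t))$, combined with Lemma~\ref{lemma-P0} to absorb $\sigma_\e^{-2}\int V_\e|u|^2\le C\int|\nabla u|^2$, gives $\int_{Q(y_0,t)}\bigl(|\nabla v_\e|^2+\sigma_\e^{-2}V_\e|v_\e|^2\bigr)\le C\int_{Q(x_0,3r)}|\nabla u|^2$, and \eqref{L-1-2} follows.

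The main obstacle will be the simultaneous realization of grid alignment and the co-area boundary bounds: arranging one cube $Q(y_0,t)$ that both avoids cutting any hole and has boundary norms of $u$ controlled by the bulk norm on $Q(x_0,3r)$ requires a pigeonhole over the $O(r/\e)$ admissible values of $t$ together with a small adjustment of $y_0$. A secondary point is tracking the scaling in Theorems~\ref{cor2-2} and \ref{extshro-3} on cubes of size $t\sim r$: a rescaling to the unit cube is essential so that constants are independent of $r$, noting that $\lambda=\sigma_\e^{-1}$ rescales to $r\sigma_\e^{-1}$.
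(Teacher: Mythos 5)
Your proposal is correct and follows essentially the same route as the paper's own proof: after rescaling, a coarea/pigeonhole argument (over the set of $t$ where the sliced boundary $\partial Q(\cdot,t)$ lies between holes, using Lemma~\ref{lemma-P0} to control $\sigma_\e^{-2}\int|u|^2$) selects a grid-compatible cube with good boundary traces; one then solves the intermediate Schr\"odinger problem there, invokes Theorems~\ref{cor2-2}/\ref{versionD2-2} with $p_0=2d/(d-1)$ together with Theorem~\ref{extshro-3} for \eqref{L-1-1}, and uses the corrector gradient estimates (Lemmas~\ref{LPGRAD-2}, \ref{LPGRADD2-2}) plus the interior estimate in Remark~\ref{re-S1} for \eqref{L-1-2}. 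The only cosmetic difference is that the paper normalizes $x_0=0$, $r=2^j\e$ and varies $t\in[2,3]$ rather than also shifting $y_0$, but this is the same device.
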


\begin{proof}
Since $r\ge 8 \e$, without loss of generality, we may assume that  $x_0=0$ and $r= 2^j \e$ for some
$j\ge 2$.
By  dilation  we may also  assume $r=1$.
Note that the parameter $\eta$ remains invariant under dilation.
Since $u=0$ on $ Q(0, 3)\setminus \Omega_{\e, \eta} $,  it follows by Lemma \ref{lemma-P0} that 
\begin{equation}\label{L-1-1a}
\sigma_\e^{-2}  \int_{Q(0, 3)} 
|u|^2
\le C \int_{Q(0, 3)} |\nabla u|^2.
\end{equation}
We claim that there exists $t\in [2, 3]$ such that
$Q_{\e, \eta} (0, t) = Q(0, t) \cap \Omega_{\e, \eta}$ and
\begin{equation}\label{L-1-3}
\sigma_\e^{-2}
\int_{\partial Q(0, t)}
|u|^2 + \int_{\partial Q(0, t)} |\nabla u|^2
\le C \int_{Q(0, 3)} |\nabla u|^2.
\end{equation}
To see \eqref{L-1-3}, we consider the set $E= \{ t\in [2, 3]: | t -k \e |\le c_0 \e \text{ for some } 1\le k \le 2^{j+2}  \}$.
Note that   $|E|\ge c>0$ and
$Q_{\e, \eta}(0, t) = Q(0, t) \cap \Omega_{\e, \eta} $ for $t\in E$.
If $  \eqref{L-1-3}$ fails for all $t\in E$, we may integrate the left-hand side of \eqref{L-1-3}
with respect to $t$ over $E$ and use \eqref{L-1-1a} 
to obtain a contradiction. 

Next, we let $w$ be the solution of
\begin{equation}\label{L-1-4}
\left\{
\aligned
-\Delta w+ \sigma_\e^{-2} V (x/\e) w & =0 & \quad & \text{ in } Q(0, t),\\
w & =u & \quad & \text{ on } \partial Q(0, t),
 \endaligned
 \right.
 \end{equation}
 where $V(x)$ is given by  \eqref{V}.
 It follows by Theorems \ref{cor2-2} and  \ref{versionD2-2} that 
 \begin{equation}\label{L-1-5}
 \|\nabla (u- \chi_{\e, \eta} w)\|_{L^2(Q(0, t))}
 \le C \psi(\eta)
 \left\{ \sigma_\e^{-1} \| w \|_{L^p (Q(0, t))}
 + \| \nabla w \|_{L^p(Q (0, t ))} \right\},
 \end{equation}
 where  $p=\frac{2d}{d-1}$ and $\chi_{\e, \eta}$ is the corrector for the domain $Q_{\e, \eta}(0, t)$.
 This, together with  Theorem \ref{extshro-3}, yields
 \begin{equation}\label{L-1-6}
 \aligned
  \|\nabla (u- \chi_{\e, \eta} w)\|_{L^2(Q(0, t)) }
 & \le C \psi(\eta) 
\left\{ \sigma_\e^{-1} \| u \|_{L^2(\partial Q(0, t))}
+ \|\nabla u \|_{L^2(\partial Q(0, t))} \right\}\\
& \le C \psi(\eta) 
\| \nabla u \|_{L^2(Q(0, 3))},
\endaligned
\end{equation}
where we have used \eqref{L-1-3} for the last step.

Finally, we let $v= \chi_{\e , \eta} w$.
Note that the estimate   \eqref{L-1-1} is given by  \eqref{L-1-6}.
To see \eqref{L-1-2}, we note that
$$
\aligned
\left(\fint_{Q(x, 2\e) } |\nabla v|^2 \right)^{1/2}
 & \le \max_{Q(x, 2\e) } |w|
\left(\fint_{Q(x, 2\e) } |\nabla \chi_{\e, \eta}|^2 \right)^{1/2}
+ \max_{Q(x, 2\e) } |\chi_{\e, \eta} | |\nabla w| \\
& \le C \max_{Q(x, 2\e) } ( \sigma_\e^{-1} |w | +|\nabla w|),
\endaligned
$$
where we have used Lemmas \ref{LPGRAD-2} and \ref{LPGRADD2-2}.
This, together with Remark \ref{re-S1}, gives
\begin{equation*}
\aligned
\left(\fint_{Q(x, 2\e)} |\nabla v|^2 \right)^{1/2}
 & \le C \max_{Q(0, 3/2)} ( \sigma_\e^{-1} |w| + |\nabla w|)\\
& \le C \left\{
\sigma_\e^{-1} \| w \|_{L^2(Q(0, 2))}
+ \|\nabla w \|_{L^2(Q(0, 2))} \right\}\\
 & \le C \|\nabla u \|_{L^2(Q(0, 3))}
\endaligned
\end{equation*}
for any $x\in Q(0, 1)$.
\end{proof}

\begin{lemma}\label{lemma-L2}
Let $u \in H^1(B(x_0, 4R))$,  where $R\ge 8 \e$ and $B(x_0, 8R)\subset \Omega$.
Suppose that $\Delta u  =0$ in $ Q(x_0,4R)\cap \Omega_{\e, \eta}$ and $u=0 $ in $ Q(x_0, 4R)\setminus \Omega_{\e, \eta} $.
Let
\begin{equation}\label{L-2-0}
v (x) =\left(\fint_{Q(x, 2\e) } |\nabla u|^2 \right)^{1/2}.
\end{equation}
Then for $2<p< \infty$, 
\begin{equation}\label{L-2-1}
\left(\fint_{Q(x_0, R)} |v|^p \right)^{1/p}
\le C \left(\fint_{Q(x_0, 2R)} |v|^2 \right)^{1/2},
\end{equation}
where $C$ depends only on $d$, $p$ and $\{Y_z^s\}$.
\end{lemma}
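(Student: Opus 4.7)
I would invoke the real-variable Theorem~\ref{RV1-3} applied to $F := v$ over $Q(x_0, R)$, with $q$ taken arbitrarily large. The $L^2$ boundedness needed to start the machine is immediate from the Fubini identity
\[
\int_E v^2 \;=\; \frac{1}{|Q(0, 2\e)|}\int|\nabla u(z)|^2\,|E\cap Q(z, 2\e)|\,dz,
\]
so the task reduces to producing, for each subcube $Q = Q(y, r)\subset Q(x_0, R)$ with $0 < r < c_0 R$, a decomposition $v \le R_Q + F_Q$ on $2Q$ satisfying \eqref{L-R2} and \eqref{L-R3}. I would split into a large-scale regime $r \ge 4\e$ and a small-scale regime $0 < r < 4\e$, with $c_0$ chosen small enough throughout.

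In the \emph{large-scale regime}, I would apply Lemma~\ref{lemma-L1} to $u$ with internal scale parameter $\rho = 5r/2$ (so $\rho \ge 8\e$ and, for $c_0$ small, $Q(y, 8\rho)\subset\Omega$). This produces $w\in H^1(Q(y, 2\rho))$ with
\[
\Bigl(\fint_{Q(y,\rho)}|\nabla(u-w)|^2\Bigr)^{1/2} \le C\psi(\eta)\Bigl(\fint_{Q(y,3\rho)}|\nabla u|^2\Bigr)^{1/2},\quad \sup_{x\in Q(y,\rho)} V_w(x) \le C\Bigl(\fint_{Q(y,3\rho)}|\nabla u|^2\Bigr)^{1/2},
\]
where $V_w(x) := \bigl(\fint_{Q(x,2\e)}|\nabla w|^2\bigr)^{1/2}$. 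Setting $R_Q := V_w$ and $F_Q := V_{u-w}$ on $2Q \subset Q(y,\rho)$, the bound $v \le R_Q + F_Q$ is Minkowski in $L^2(Q(x,2\e))$. Because $3\rho = 15r/2 < 8r$, trivial volume comparison combined with the Fubini identity yields $\fint_{Q(y,3\rho)}|\nabla u|^2 \le C\fint_{Q(y,8r)}|\nabla u|^2 \le C'\fint_{8Q} v^2$, so \eqref{L-R2} holds with $q=\infty$. A matching Fubini calculation converts $\fint_{2Q} V_{u-w}^2$ into the $L^2$ average of $|\nabla(u-w)|^2$ over $Q(y, 2r+2\e) \subset Q(y,\rho)$, which is bounded by $C\psi(\eta)^2\fint_{Q(y,3\rho)}|\nabla u|^2 \le C'\psi(\eta)^2\fint_{8Q} v^2$; this is \eqref{L-R3} with $\delta = C\psi(\eta)$, which is less than the threshold $\delta_0$ since we are working under the standing assumption $\eta<\eta_0$.

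In the \emph{small-scale regime}, both $2Q$ and $8Q$ have side $\lesssim\e$. Take $R_Q := v|_{2Q}$ and $F_Q := 0$, which reduces \eqref{L-R2} to an interior reverse-Hölder inequality for $v$ on cubes of size $\lesssim\e$. By Jensen, $\fint_{2Q} v^q \le C_q\fint_{Q(y, 2r+2\e)}|\nabla u|^q$, so the matter reduces to a reverse Hölder for $|\nabla u|$ on cubes of size $\sim\e$. Rescaling $x\mapsto x/\e$ converts the problem into Laplace's equation with zero Dirichlet data on uniformly $C^1$ holes in a unit-sized region; classical Meyers--Gehring higher integrability combined with up-to-the-boundary $C^{1,\alpha}$ regularity then gives $\bigl(\fint|\nabla u|^q\bigr)^{1/q} \le C_q\bigl(\fint|\nabla u|^2\bigr)^{1/2}$ on the rescaled cell for every $q<\infty$, with constants depending only on $d$, $q$, and the uniform $C^1$ modulus of $\{Y_z^s\}$. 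Unscaling and a further Fubini pass close \eqref{L-R2}.

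The main obstacle I expect is this small-scale reverse-Hölder step: as $\eta\to 0$ the hole in the rescaled cell shrinks, and one must verify that the boundary-regularity constants do not degenerate. The uniform $C^1$ hypothesis on $\{Y_z^s\}$ is exactly what prevents this---after a second rescaling centered on the hole, the problem becomes a Dirichlet problem outside a bounded $C^1$ set, to which classical Schauder/Meyers estimates apply with parameter-independent constants. Once both regimes are verified, Theorem~\ref{RV1-3} delivers \eqref{L-2-1} for every $2 < p < \infty$, which is the desired conclusion.
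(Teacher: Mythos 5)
Your large-scale decomposition (Lemma~\ref{lemma-L1} at scale $\rho\sim r$, Minkowski in $L^2(Q(x,2\e))$ to split $v$, $\delta=C\psi(\eta)$, standing smallness $\eta<\eta_0$) matches the paper's argument. A minor mis-citation first: the machine you want is Theorem~\ref{RV-L}, a reverse H\"older self-improvement for a fixed function $F=v$, not Theorem~\ref{RV1-3}, which concerns sublinear operators and needs $L^2\to L^2$ boundedness; the ``$L^2$ boundedness needed to start the machine'' has no role in Theorem~\ref{RV-L}.

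The genuine gap is the small-scale regime. You take $R_Q=v$, $F_Q=0$, push $v$ back to $|\nabla u|$ with Jensen, and then claim a reverse H\"older inequality for $|\nabla u|$ on cubes of size $\sim\e$ with constants depending only on $d$, $q$, and the $C^1$ modulus of $\{Y_z^s\}$. That claim is false: for a harmonic $u$ vanishing on a hole of size $\e\eta$, the gradient has a spike of order $(\e\eta)^{-1}$ near the hole, and a direct computation with the corrector $1-\phi_*^z((x-\e y_z)/(\e\eta))$ gives
\[
\frac{\bigl(\fint_{Q(\e y_z,\e)}|\nabla u|^q\bigr)^{1/q}}{\bigl(\fint_{Q(\e y_z,\e)}|\nabla u|^2\bigr)^{1/2}}\;\sim\;\eta^{\frac{d}{q}-\frac{d}{2}}\;\longrightarrow\;\infty
\]
as $\eta\to 0$ for every $q>2$. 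The uniform $C^1$ hypothesis controls near-boundary regularity at the scale of the hole, but it cannot defeat this: after your ``second rescaling'' the hole has unit size and the ambient cube has size $1/\eta$, and the $L^q$/$L^2$ comparison over that much larger cube reintroduces the $\eta$-divergence through the scaling factors $\eta^{d/q}$ versus $\eta^{d/2}$. (Indeed, if a uniform-in-$\eta$ reverse H\"older held for $|\nabla u|$ itself, the whole averaging construction of $S_\e$ would be unnecessary and the sharp growth of $A_p$ in Theorems~\ref{main-thm-1}--\ref{main-thm-2} would be contradicted.) The paper instead proves \eqref{L-R2} at small scales by a purely geometric sup bound that never sees $|\nabla u|$ in $L^q$: for $x\in 2Q$ with $r\lesssim\e$, $Q(x,2\e)\subset Q(y_0,2r+2\e)$ gives $\max_{2Q}v\le C\bigl(\fint_{Q(y_0,2r+2\e)}|\nabla u|^2\bigr)^{1/2}$ with $C$ dimensional, and the Fubini identity you already wrote, read the other way (for $z\in Q(y_0,2r+2\e)$ one has $|Q(z,2\e)\cap 4Q|\ge c\,|4Q|$), turns this into $C\bigl(\fint_{4Q}v^2\bigr)^{1/2}$. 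Jensen discards exactly the $\e$-scale smoothing in the definition of $v$ that makes the bound uniform in $\eta$.
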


\begin{proof}
Without loss of generality we may assume $x_0=0$.
By dilation we may assume  $R=1$.
To prove \eqref{L-2-1}, we apply Theorem \ref{RV-L} with $F=v$.
Let $Q=Q(y_0, r)$, where $y_0\in Q(0, 1)$ and $0< r< (1/8)$.
If $0< r< 8 \e$, we let $R_Q= v$ and $F_Q=0$.
Note that 
$$
\max_{2Q} R_Q  \le C \left(\fint_{Q(y_0, 2r+2\e)} |\nabla u|^2 \right)^{1/2}
\le C \left(\fint_{4Q} |v|^2\right)^{1/2}.
$$
If $r\ge 8 \e$, we let $R_Q=|\nabla v|$ and $F_Q= |\nabla (u-v)|$,
where $v$ is given by Lemma \ref{lemma-L1}. 
In view of \eqref{L-1-1} and \eqref{L-1-2},
 we obtain  \eqref{L-R2} and \eqref{L-R3} with $\delta= C \psi (\eta)$.
 It follows by Theorem \ref{RV-L}  that the estimate \eqref{L-2-1} holds if $\eta< \eta_0$, where $\eta_0>0$ depends only on $d$, $p$ and $\{Y_z^s\}$.
\end{proof}

Next, we treat the boundary case where $x_0 \in \partial \Omega$.
Since $\Omega$ is Lipschitz, there exists $r_0>0$ such that $B(x_0,  r_0)\cap \Omega = B(x_0, r_0) \cap D$ and
$B(x_0, r_0) \cap \partial\Omega = B(x_0, r_0) \cap \partial D$, where, after a rotation of the coordinate system,  $D$ is given by 
\begin{equation}\label{D}
D= \left\{
(x^\prime, x_d) \in \R^d: \  x^\prime \in \R^{d-1} \text{ and } 
x_d > \varphi  (x^\prime) \right\},
\end{equation}
for some Lipschitz function   $\varphi: \R^{d-1} \to \R$.

\begin{thm}\label{RV2-3}
    Let $2<p<q$ and $D$ be given by \eqref{D}.
     Let $x_0 \in \partial D  $ and  $F \in L^2(Q(x_0, 2R)\cap D)$.
     Suppose that for each cube $Q=Q(y, r) $ with the property that $0< r< c_0R $ and either $y\in Q(x_0, R ) \cap \partial D$ or 
     $4Q  \subset Q(x_0, 2R)\cap  D $, 
    there exist two measurable functions $F_Q$ and $R_Q$ in $ 2Q \cap D$
    such that 
    \begin{equation}\label{BL-R1}
    |F| \le |R_Q| + |F_Q| \quad \text{ in } 2Q \cap D,
    \end{equation}
    \begin{equation}\label{RQ-3}
        \left( \fint_{2Q\cap D }  |R_Q |^q  \right)^{1/q} \leq N \left(\fint_{4Q\cap D}  |F|^2 \right)^{1/2}, 
\end{equation}
\begin{equation}\label{FQ-3}
        \left( \fint_{2Q\cap D}  |F_Q |^2  \right)^{1/2} \leq  \eta \left(\fint_{4Q\cap D}  |F|^2 \right)^{1/2} , 
    \end{equation}
    where $N > 1$ and $0 < c_0 < 1 $. Then there exists $\eta_0 > 0 $,
     depending only on $d, N,  c_0, ,p, q$ and $ \|\nabla \varphi \|_\infty$, 
     with the property that if $0 \leq \eta < \eta_0$,  then $F \in L^p(Q(x_0, R) \cap D) $ and 
    \begin{equation}\label{RHF-3}
        \left( \fint_{Q(x_0, R) \cap D} |F|^p \right)^{1/p} \leq C  \left( \fint_{Q(x_0, 2R) \cap D} |F|^2 \right)^{1/2},
    \end{equation}
    where $C $ depends at most on d, $N$,  $c_0$, $p$, $q$ and $ \|\nabla \varphi \|_\infty$. 
\end{thm}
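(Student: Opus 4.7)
The plan is to prove Theorem \ref{RV2-3} by adapting the interior real-variable argument of Theorem \ref{RV-L} to the boundary setting, following the strategy developed in \cite{shenbounds}. The core is to establish a good-$\lambda$ inequality for the level sets of the Hardy-Littlewood maximal function of $|F|^2$ restricted to $Q(x_0, 2R) \cap D$, and then integrate. After a flattening change of variables, I may assume the Lipschitz character of $\partial D$ is encoded only through $\|\nabla\varphi\|_\infty$, and all averages over cubes $Q$ centered in $\overline{D}$ satisfy $|Q\cap D|\ge c|Q|$ with $c$ depending only on $\|\nabla\varphi\|_\infty$.

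First, I would set $\lambda_0 := C_0 \bigl(\fint_{Q(x_0,2R)\cap D}|F|^2\bigr)^{1/2}$ with $C_0$ large, and for each $\lambda>\lambda_0$ perform a Vitali/Calder\'on--Zygmund decomposition of the superlevel set
\begin{equation*}
E_\lambda = \{x\in Q(x_0,R)\cap D : M(|F|^2\chi_{Q(x_0,2R)\cap D})(x) > \lambda^2\}
\end{equation*}
into a disjoint family of maximal dyadic cubes $\{Q_k\}$ with sidelengths $r_k<c_0 R$. For each $k$ I replace $Q_k$ by $Q(y_k, r_k)$ with $y_k$ either in $Q(x_0,R)\cap\partial D$ (if $Q_k$ meets $\partial D$, by projecting the center) or with $4Q_k\subset Q(x_0,2R)\cap D$ otherwise; in the former case the Lipschitz bound ensures $|2Q_k\cap D|\sim|2Q_k|$ and $|4Q_k\cap D|\sim|4Q_k|$. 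Either way the hypotheses \eqref{BL-R1}--\eqref{FQ-3} furnish $R_{Q_k}, F_{Q_k}$ on $2Q_k\cap D$.

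Next, on each $Q_k$ I would use the decomposition $|F|\le|R_{Q_k}|+|F_{Q_k}|$ and Chebyshev's inequality to bound
\begin{equation*}
|\{x\in Q_k\cap D : |F(x)|>A\lambda\}| \le \frac{C}{(A\lambda)^q}\int_{2Q_k\cap D}|R_{Q_k}|^q + \frac{C}{(A\lambda)^2}\int_{2Q_k\cap D}|F_{Q_k}|^2.
\end{equation*}
Applying \eqref{RQ-3} and \eqref{FQ-3} together with the stopping-time inequality $\fint_{4Q_k\cap D}|F|^2\le C\lambda^2$ (which holds because $Q_k$ is maximal), these two terms are controlled by $(CN^q A^{-q}+C\eta^2 A^{-2})|Q_k\cap D|$. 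Summing over $k$ and comparing with $|E_\lambda|$ yields the good-$\lambda$ inequality
\begin{equation*}
|\{M(|F|^2\chi_{Q(x_0,2R)\cap D})>(A\lambda)^2\}\cap Q(x_0,R)\cap D| \le (CN^q A^{-q}+C\eta^2 A^{-2})\,|E_\lambda|
\end{equation*}
for $\lambda\ge\lambda_0$. Choosing $A$ large (using $q>p$) and then $\eta_0$ small so that the factor is less than $A^{-p}/2$, the standard layer-cake integration $\int_0^\infty \lambda^{p-1}|E_\lambda|\,d\lambda$ yields \eqref{RHF-3} after absorbing.

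The main obstacle is handling boundary cubes in the Whitney decomposition: when $Q_k$ meets $\partial D$, one must recenter at a boundary point without losing the $L^2$ stopping-time control on $4Q_k\cap D$ and without losing the comparability $|Q_k\cap D|\sim|Q_k|$. Both are resolved by the Lipschitz condition via a standard enlargement of cubes by a factor depending only on $\|\nabla\varphi\|_\infty$, so that the enlarged boundary-centered cube contains the original $Q_k$ and still lies in $Q(x_0,2R)$. Once this bookkeeping is in place, the rest of the argument mirrors \cite[Theorem 3.3]{shenbounds} and \cite[Theorem 3.2]{MR2353255} verbatim.
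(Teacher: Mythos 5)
The paper itself does not prove Theorem \ref{RV2-3}; it simply cites \cite{MR2353255} and \cite[pp.\ 79--82]{shen2018periodic}. Your sketch follows the same general real-variable strategy as those references (Calder\'{o}n--Zygmund decomposition of a superlevel set, a good-$\lambda$ estimate derived from the two-term splitting $|F|\le |R_Q|+|F_Q|$, absorption), so there is no disagreement in approach.

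That said, there are a few places where the sketch as written does not quite close, all of them recoverable but worth noting. First, your Chebyshev step controls $|\{x\in Q_k\cap D: |F(x)|>A\lambda\}|$, yet the good-$\lambda$ inequality you then state has $|\{M(|F|^2\chi)>(A\lambda)^2\}\cap Q(x_0,R)\cap D|$ on the left; the latter set is strictly larger and the bound you proved does not transfer to it. Either you must upgrade the Chebyshev step to a local maximal-function estimate (weak-$(q/2,q/2)$ applied to $M_{2Q_k}(|R_{Q_k}|^2)$ and weak-$(1,1)$ applied to $M_{2Q_k}(|F_{Q_k}|^2)$, together with the observation that on $Q_k$ the global maximal function is comparable to the local one plus $C\lambda^2$), or you must keep $|\{|F|>A\lambda\}|$ on the left, integrate, and invoke the $L^{p/2}$-boundedness of the Hardy--Littlewood maximal function to replace $\int M(|F|^2)^{p/2}$ by $C\int|F|^p$ before absorbing. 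Either route works, but the present write-up conflates the two. Second, the stopping-time inequality $\fint_{4Q_k\cap D}|F|^2\le C\lambda^2$ is not an immediate consequence of the maximality of $Q_k$ (whose dyadic parent has side $2r_k$, not $4r_k$); the standard fix is to note that the complement of $E_\lambda$ meets a fixed dilate of $Q_k$, so that some point near $Q_k$ has small $M(|F|^2)$, which controls the average over $4Q_k$. Third, the hypothesis on $Q=Q(y,r)$ requires either $y\in Q(x_0,R)\cap\partial D$ or $4Q\subset Q(x_0,2R)\cap D$; interior CZ cubes close to $\partial D$ but not meeting it satisfy neither condition, so they too must be absorbed into boundary-centered cubes (not only those that actually intersect $\partial D$), and the projected centers of CZ cubes near the lateral faces of $Q(x_0,R)$ may land outside $Q(x_0,R)$, which is handled by the usual trick of working on an intermediate dilate of the cube. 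Finally, the absorption step requires knowing a priori that the relevant $L^p$ quantity is finite, which is obtained by truncation or by working on bounded regions where $L^2\subset L^p$ fails but the integral is still finite by construction; this is routine but should be stated. None of these is a wrong turn --- they are precisely the technicalities the cited references spend their effort on --- but a complete proof must address each of them.
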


The proof of  the above theorem can be found in \cite{MR2353255} or \cite[pp. 79-82]{ shen2018periodic}. 

\begin{lemma}\label{lemma-LB1}
Let $\Omega$ be a bounded $C^1$ domain.
Let $u \in H^1(Q(x_0, 4r)\cap \Omega)$, where  $r\ge 8\e$ and $x_0 \in \partial\Omega$. 
Suppose that $\Delta u=0$ in $Q(x_0, 4r) \cap \Omega_{\e, \eta} $ and
$u =0 $ on $Q(x_0, 4r)\cap \partial   \Omega_{\e, \eta}$.
Then there exists $v\in H^1(Q(x_0, 2r)\cap \Omega)$ such that
\begin{equation}\label{LB-1-1}
\left(\fint_{Q(x_0, r)\cap \Omega} |\nabla (u-v)|^2\right)^{1/2}
\le C \psi (\eta)
\left(\fint_{Q(x_0, 3r)\cap \Omega} |\nabla u |^2 \right)^{1/2},
\end{equation}
\begin{equation}\label{LB-1-2}
\left( \fint_{Q(x_0, r)\cap \Omega} 
\left(\fint_{Q(x, 2\e)\cap \Omega} |\nabla v|^2 \right)^{p/2}\right)^{1/p} 
\le C \left(\fint_{Q(x_0, 3r)\cap \Omega } |\nabla u|^2 \right)^{1/2},
\end{equation}
for any $p>2$, 
where  $\psi (\eta)=\eta^{1/2}$ for $d\ge 3$ and $\psi(\eta) =|\ln \eta|^{-1/2}$ for $d=2$.
The constant $C$ depends only on $d$, $p$,  $\{ Y_z^s\} $ and $\Omega$.
\end{lemma}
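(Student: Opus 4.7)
After a translation placing $x_0 = 0$ and a dilation (under which $\eta$ is invariant) normalizing $r = 1$, I would mirror the structure of Lemma~\ref{lemma-L1}, but work in the domain $D_t := Q(0, t) \cap \Omega$ for a well-chosen $t \in [2, 3]$. Since $u$ vanishes on $\partial \Omega_{\e, \eta} \cap Q(0,4)$---which by hypothesis includes both all holes inside and all of $\partial\Omega \cap Q(0,4)$---Lemma~\ref{lemma-P0} combined with a Poincar\'e inequality directed normally to $\partial\Omega$ yields
\begin{equation*}
\sigma_\e^{-2}\int_{Q(0,3)\cap\Omega} |u|^2 \le C\int_{Q(0,3)\cap\Omega} |\nabla u|^2.
\end{equation*}
A Chebyshev-type averaging over those $t \in [2, 3]$ for which $\partial Q(0, t)$ misses every hole (this subset has positive measure bounded below) then selects one $t$ satisfying
\begin{equation*}
\sigma_\e^{-2}\int_{\partial Q(0,t)\cap\Omega}|u|^2 + \int_{\partial Q(0,t)\cap\Omega}|\nabla u|^2 \le C\int_{Q(0,3)\cap\Omega}|\nabla u|^2,
\end{equation*}
and such that the perforated version of $D_t$ coincides with $D_t \cap \Omega_{\e,\eta}$. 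Because $\Omega$ is $C^1$, $D_t$ is Lipschitz with connected boundary.

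Next, define $w \in H^1(D_t)$ to be the solution of the intermediate Schr\"odinger problem $-\Delta w + \sigma_\e^{-2} V(x/\e)\, w = 0$ in $D_t$ with $w = u$ on $\partial D_t$ (noting $u$ vanishes on $\partial\Omega \cap \overline{Q(0,t)}$), and let $\chi_{\e, \eta}$ be the corrector for the perforated $D_t$ constructed in Section~\ref{section-C1} (or Section~\ref{section-C2} when $d = 2$). Set $v := \chi_{\e, \eta}\, w$. Applying Theorem~\ref{cor2-2} (or Theorem~\ref{versionD2-2}) with $D_t$ in place of $\Omega$ gives
\begin{equation*}
\|\nabla(u - v)\|_{L^2(D_t)} \le C \psi(\eta)\bigl(\sigma_\e^{-1}\|w\|_{L^q(D_t)} + \|\nabla w\|_{L^q(D_t)}\bigr),\qquad q = \tfrac{2d}{d-1},
\end{equation*}
and combining with Theorem~\ref{extshro-3}---valid in $D_t$---together with the trace estimate above bounds the right-hand side by $C\psi(\eta) \|\nabla u\|_{L^2(Q(0,3)\cap\Omega)}$, which yields~\eqref{LB-1-1}.

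For~\eqref{LB-1-2}, I would use $\nabla v = \chi_{\e,\eta}\,\nabla w + w\,\nabla\chi_{\e,\eta}$ together with $|\chi_{\e,\eta}| \le 1$ to bound $\bigl(\fint_{Q(x,2\e)\cap\Omega} |\nabla v|^2\bigr)^{1/2}$ by a combination controlled, via interior regularity (Remark~\ref{re-S1}) on cubes meeting a hole and via $\|\chi_{\e,\eta}\|_\infty \le C$ on cubes disjoint from holes, by the Hardy--Littlewood maximal functions $M(|\nabla w|\chi_{D_t})(x)$ and $\sigma_\e^{-1} M(|w|\chi_{D_t})(x)$. Since $M$ is bounded on $L^{p/2}$ for $p>2$, this reduces matters to establishing
\begin{equation*}
\|\nabla w\|_{L^p(D_{t'})} + \sigma_\e^{-1}\|w\|_{L^p(D_{t'})} \le C\|\nabla u\|_{L^2(Q(0,3)\cap\Omega)}
\end{equation*}
for some $t' \in (1, t)$ and every $p > 2$. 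Splitting $w = G + \tilde w$, where $G$ is the harmonic extension of $u|_{\partial D_t}$ and $\tilde w$ solves $-\Delta \tilde w + \sigma_\e^{-2} V \tilde w = -\sigma_\e^{-2} V G$ with zero Dirichlet data, the $C^1$ boundary $W^{1,p}$ theory for Laplace's equation controls $G$, and Theorem~\ref{corint-3}---which in $C^1$ domains gives estimates for all $2 < p < \infty$---handles $\tilde w$.

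The principal obstacle is the arbitrary $p > 2$ demanded in~\eqref{LB-1-2}: Theorem~\ref{extshro-3} alone yields only the Lipschitz-boundary threshold $p = 2d/(d-1)$, so the full range requires $C^1$ regularity of $\partial\Omega$, which simultaneously supplies the $W^{1,p}$ theory for $G$ and extends Theorem~\ref{corint-3} beyond $p = 3+\delta$ (resp.\ $4+\delta$). A secondary technical issue is verifying that the corrector $\chi_{\e,\eta}$ on $D_t$ takes the value $1$ on every portion of $\partial D_t$, including the curved piece $\partial\Omega \cap \overline{Q(0,t)}$; this holds by construction provided $t$ is selected so that every hole intersecting $Q(0,t)$ sits at definite distance inside, which is automatic from the selection step.
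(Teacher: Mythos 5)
Your overall skeleton is right---normalize so $x_0=0$, $r=1$, choose $t\in[2,3]$ by a Chebyshev averaging so that the boundary trace estimate holds and $\partial Q(0,t)$ misses the holes, solve the Schr\"odinger problem in $D_t = Q(0,t)\cap\Omega$, set $v=\chi_{\e,\eta}w$, and use Theorems~\ref{cor2-2}/\ref{versionD2-2} together with Theorem~\ref{extshro-3} and the trace estimate to obtain~\eqref{LB-1-1}. That part matches the paper.

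The argument you sketch for~\eqref{LB-1-2} has a genuine gap, and it is located precisely in the ``principal obstacle'' paragraph, where you try to resolve it by global means. You propose to get $\|\nabla w\|_{L^p(D_{t'})}+\sigma_\e^{-1}\|w\|_{L^p(D_{t'})}$ for \emph{every} $p>2$ by splitting $w=G+\tilde w$ and invoking Theorem~\ref{corint-3} plus $W^{1,p}$ theory for $G$. This cannot work, for two reasons. First, the boundary datum for $G$ on $\partial D_t$ is only controlled in $L^2(\partial D_t)$ (with tangential gradient in $L^2$): nontangential-maximal-function estimates then place $G$ and $\nabla G$ only in $L^{2d/(d-1)}(D_t)$, and no $C^1$ hypothesis on $\partial\Omega$ upgrades this, because the obstruction is the regularity of the \emph{data} on the flat piece $\partial Q(0,t)$, not the geometry of $\partial\Omega$. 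Second, $D_t=Q(0,t)\cap\Omega$ is only Lipschitz---it has corners on $\partial Q(0,t)$ and where the cube meets $\partial\Omega$---so Theorem~\ref{corint-3} applied in $D_t$ stays in the Lipschitz range $p<3+\delta$ (resp.\ $4+\delta$); the ``$C^1$ extension'' you invoke is not available in this auxiliary domain. Thus your route gives $L^p$ bounds only up to $p=2d/(d-1)$, short of what~\eqref{LB-1-2} needs.

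The fix, and what the paper actually does, is to make the $L^p$ improvement \emph{local}: since $w=u=0$ on $Q(0,t)\cap\partial\Omega$, the reverse-H\"older estimate~\eqref{S-12} of Lemma~\ref{lemma-S1} (boundary version, valid for all $2<p<\infty$ because $\partial\Omega$ is $C^1$) together with the interior estimates of Remark~\ref{re-S1} gives directly
\begin{equation*}
\sigma_\e^{-1}\max_{Q(0,3/2)\cap\Omega}|w|+\Big(\fint_{Q(0,3/2)\cap\Omega}|\nabla w|^p\Big)^{1/p}\le C\Big(\fint_{Q(0,2)\cap\Omega}|\nabla w|^2\Big)^{1/2},
\end{equation*}
which, combined with $\nabla v=\chi_{\e,\eta}\nabla w+w\,\nabla\chi_{\e,\eta}$, the corrector gradient bounds of Lemmas~\ref{LPGRAD-2}/\ref{LPGRADD2-2}, and finally Theorem~\ref{extshro-3} at the single exponent $2d/(d-1)$ to control the $L^2$ quantity on the right, yields~\eqref{LB-1-2}. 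The point is that the local reverse-H\"older estimate never sees the corners of $Q(0,t)$ nor the $L^2$-only boundary data on $\partial Q(0,t)$; it uses only that $\partial\Omega$ is $C^1$ and that $w$ vanishes there.
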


\begin{proof}
The proof is similar to that of Lemma \ref{lemma-L1}.
Without loss of generality we may assume $x_0=0$.
By dilation we may also assume $r=1$.
Choose $t\in [2, 3]$ such that 
$( Q(0, t) \cap \Omega)_{\e, \eta} = Q(0, t)\cap \Omega_{\e, \eta}$ and 
\begin{equation}\label{LB-1a}
\sigma_\e^{-2} 
\int_{\Omega \cap \partial Q(0, t)} 
| u|^2 + \int_{\Omega\cap \partial Q(0, t)}  |\nabla u|^2 
\le C \int_{\Omega\cap Q(0, 3)} |\nabla u|^2.
\end{equation}
 Let  $w$ be the solution of 
\begin{equation}\label{LB-1-4}
\left\{
\aligned
-\Delta w+ \sigma_\e^{-2} V (x/\e) w & =0 & \quad & \text{ in } Q(0, t)\cap \Omega,\\
w & =u & \quad & \text{ on } \partial ( Q(0, t)\cap \Omega).
 \endaligned
 \right.
 \end{equation}
 As in the proof of Theorem \ref{lemma-L1}, we have
 \begin{equation}
 \| \nabla (u-\chi_{\e, \eta} w) \|_{L^2(Q(0, t)\cap\Omega )}
 \le C \psi (\eta)
 \|\nabla u \|_{L^2(Q(0, 3)\cap \Omega)},
 \end{equation}
 where $\chi_{\e, \eta}$ denotes the corrector for the domain $Q_{\e, \eta}(0, t)= Q(0, t) \cap \Omega_{\e, \eta}$.
 Let $v= \chi_{\e, \eta} w$. Then
 \begin{equation}
 \left(\fint_{Q(x, 2\e) \cap \Omega} |\nabla v|^2 \right)^{1/2}
 \le C \sigma_\e^{-1} \max_{Q(x, 2\e) \cap \Omega } |w| 
+ C \left(\fint_{Q(x, 2\e) \cap \Omega} |\nabla w|^2\right)^{1/2}.
\end{equation}
Since $w=u=0$ on $Q(0, t ) \cap\partial  \Omega$, it follows from Lemma \ref{lemma-S1} that 
\begin{equation}
\aligned
\sigma_\e ^{-1} \max_{Q(0, 3/2)\cap \Omega} |w|
 & \le C \left(\fint_{Q(0, 2) \cap \Omega}
|\nabla w|^2 \right)^{1/2}, \\
\left(\fint_{Q(0, 3/2)\cap \Omega} |\nabla w|^p \right)^{1/p}
 & \le C \left(\fint_{Q(0, 2)\cap \Omega} |\nabla w|^2 \right)^{1/2}
\endaligned
\end{equation}
for $2< p< \infty$.
As a result, we obtain 
\begin{equation*}
\aligned
\int_{Q(0, 1)\cap \Omega}
\left(\fint_{Q(x, 2\e)\cap \Omega } |\nabla v|^2 \right)^{p/2}
 & \le C \left(\fint_{Q(0, 2) \cap \Omega}
|\nabla w|^2 \right)^{p/2}\\
& \le C \big\{
\|\nabla u \|_{L^2(\partial (Q(0, t)\cap \Omega))}
+ \sigma_\e^{-1} \| u \|_{L^2(\partial Q(0, t)\cap \Omega)} \big\}^p,
\endaligned
\end{equation*}
This, together with \eqref{LB-1a}, gives \eqref{LB-1-2}.
\end{proof}

\begin{proof}[Proof of Theorem \ref{thm-L1}.] 
As we discussed earlier, it suffices to prove the reverse H\"older estimate \eqref{L-2}, 
where $0<R<c_0$ and  either $ Q(x_0, 4R) \subset \Omega$ or $x_0\in \partial\Omega$.
Note that the interior case $Q(x_0, 4R) \subset \Omega$ is given by Lemma \ref{lemma-L2}.
For the boundary case where $x_0\in \partial \Omega$,
we apply Theorem \ref{RV2-3}.
To this end, for each cube $Q=Q(y, r)$ with the property that  $0<r< c_0R$ and either  $4Q \subset  Q(x_0, 2R)\cap \Omega$ or 
$y \in Q(x_0, 2R) \cap \partial \Omega$, we need to construct two functions $F_Q$ and $R_Q$
such that \eqref{BL-R1}-\eqref{FQ-3} hold.
Again, the case $4Q\subset Q(x_0, 2R)\cap \Omega$ is given by Lemma \ref{lemma-L2}.
For the case $y\in \partial\Omega$ and $0< r< 8 \e$, we let 
$F_Q=F$ and $R_Q=0$.
Finally, for $r\ge 8 \e$, we use Lemma \ref{lemma-LB1}.
\end{proof}

\begin{thm}\label{thm-4}
Let $\Omega$ be a bounded $C^1$ domain and $\Omega_{\e, \eta}$ be given by \eqref{O-e}.
Then,
\begin{equation}\label{Large-e2}
\| S_\e (F,0)\|_{L^p(\Omega)}
\le  C \min (\sigma_\e, 1) \| F \|_{L^p(\Omega)}
\end{equation}
 for any $2< p< \infty$,
 where $C$ depends only on $d$, $p$, $\Omega$, and $\{Y_z^s\}$.
\end{thm}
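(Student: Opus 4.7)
The plan is to mirror the strategy of Theorem \ref{thm-L1}, applied now to the rescaled sublinear operator
\begin{equation*}
T(F) := \frac{1}{\min(\sigma_\e, 1)}\, S_\e(F, 0).
\end{equation*}
Taking $f=0$ in \eqref{L-1} gives $\|T(F)\|_{L^2(\R^d)} \le C\|F\|_{L^2(\Omega_{\e, \eta})}$ with $C$ independent of $\e$ and $\eta$, so the $L^2$ boundedness hypothesis \eqref{L2L2-3} of the real-variable machinery is satisfied uniformly.

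To verify the reverse H\"older hypothesis \eqref{RHT-3} (and its boundary analogue in Theorem \ref{RV2-3}), I would argue as follows. Fix a cube $Q(x_0, R)$ with $0<R<c_0$, with either $Q(x_0, 4R) \subset \Omega$ or $x_0 \in \partial \Omega$. Suppose $F \in L^2(\Omega)$ is supported in $\Omega \setminus Q(x_0, 4R)$, and let $u_\e$ be the solution of \eqref{BVP-L} with this $F$ and $f=0$. Then $u_\e$ is harmonic in $Q(x_0, 4R) \cap \Omega_{\e, \eta}$ and vanishes on $Q(x_0, 4R) \cap \partial \Omega_{\e, \eta}$ (and on $Q(x_0, 4R)\cap\partial\Omega$ in the boundary case). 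This is exactly the hypothesis of Lemmas \ref{lemma-L2} and \ref{lemma-LB1}. Applied to this $u_\e$, those lemmas yield
\begin{equation*}
\left(\fint_{Q(x_0, R)\cap \Omega} |S_\e(F,0)|^p\right)^{1/p}
\le C \left(\fint_{Q(x_0, 2R)\cap \Omega} |S_\e(F,0)|^2\right)^{1/2}
\end{equation*}
for any $p>2$, provided $R\ge 8\e$; the easy range $R<8\e$ is handled by the trivial pointwise comparison of $S_\e(F,0)$ with an average over a slightly larger cube, exactly as in the proof of Theorem \ref{thm-L1}.

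Since this reverse H\"older inequality is homogeneous in $S_\e(F,0)$, the normalization factor $\min(\sigma_\e, 1)$ cancels and the identical inequality holds for $T(F)$, with a constant $N$ independent of $\e,\eta$. Theorems \ref{RV1-3} and \ref{RV2-3} (the latter invoked after the standard localization/flattening already used for Theorem \ref{thm-L1}) then deliver
\begin{equation*}
\|T(F)\|_{L^p(\Omega)} \le C\, \|F\|_{L^p(\Omega)} \qquad (2<p<\infty),
\end{equation*}
which is precisely \eqref{Large-e2}. There is no fundamentally new obstacle: the entire corrector-based machinery of Sections \ref{section-C1}--\ref{section-Con} was designed so that the reverse H\"older property of $S_\e$ follows from harmonicity of $u_\e$ in the sub-cube, and harmonicity is blind to whether the original source appears as $F$ or as $\mathrm{div}(f)$. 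The only point requiring attention is the explicit presence of $\min(\sigma_\e,1)$ in the final bound, which is the reason for introducing the rescaled $T$ rather than applying the machinery to $S_\e(\cdot,0)$ directly.
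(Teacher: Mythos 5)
Your proposal is correct and matches the paper's own argument: the paper likewise introduces the rescaled operator $T_\e(F)=(\min(\sigma_\e,1))^{-1}S_\e(F,0)$, notes its uniform $L^2$-boundedness from \eqref{L-1}, and reduces the reverse H\"older condition to \eqref{L-2} exactly as in the proof of Theorem~\ref{thm-L1}, since the latter only uses harmonicity of $u_\e$ in the sub-cube. Your additional remark that harmonicity is blind to whether the datum is $F$ or $\mathrm{div}(f)$ is precisely the implicit point the paper is invoking.
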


\begin{proof}
To prove \eqref{Large-e2}, we consider the operator 
$$
T_\e (F) = \left(\min(\sigma_\e, 1)\right)^{-1} S_\e (F, 0)
$$
and apply Theorem \ref{RV1-3}.
Note that $\| T_\e \|_{L^2 \to L^2} \le C$. Moreover, 
 the reverse H\"older condition \eqref{RHT-3} is reduced to the estimate \eqref{L-2}, 
as in the case of $S_\e (0, f)$.
As a result, by Theorem \ref{RV1-3}, we obtain $\| T_\e \|_{L^p\to L^p} \le C$ for any $p>2$.
This gives the estimate  \eqref{Large-e2} for $p>2$.
\end{proof}

\begin{proof}[Proof of Theorem \ref{thm-S0}]
Since
$$
S_\e (F, f) \le S_\e (F, 0) + S_\e (0, f),
$$
Theorem \ref{thm-S0} follows readily from Theorems \ref{thm-L1} and \ref{thm-4}.
\end{proof}



\section{Lower bounds for $A_p(\Omega_{\e, \eta})$ and $B_p(\Omega_{\e, \eta})$ }\label{section-Sh}

In this section we discuss the sharpness of the estimates in Theorems \ref{main-thm-1} and \ref{main-thm-2} for $A_p(\Omega_{\e, \eta})$ as
well as those in Section \ref{section-B} for $B_p(\Omega_{\e, \eta})$.
Our approach is similar to that used in  \cite{shen2023uniform} for an unbounded and  periodically perforated domain,
\begin{equation}\label{space-6}
\omega_{\epsilon,\eta} = \mathbb{R}^d \backslash \bigcup_{z \in \mathbb{Z}^d} \e (z + \eta \overline{Y^s}), 
\end{equation}
where $Y^s$ is a bounded domain with connected $C^1$ boundary such that $B(0, c_0) \subset Y^s \subset B(0, 1/4)$.
Following an  idea of \cite{jing2020unified}, we consider an $\e$-periodic corrector defined by 

\begin{equation}\label{chiper-6}
    - \Delta \psi_{\e,\eta} = \e^{-2} \eta^{d-2} \qquad \text{in } \ \omega_{\e, \eta}
     \qquad \text{and} \qquad \psi_{\e, \eta} = 0 \qquad \text{on} \ \ \mathbb{R}^d \setminus  \omega_{\e,\eta}.
\end{equation}

\begin{lemma}\label{chieetta1-6}
    Let $\psi_{\e,\eta}$ be the $\e$-periodic corrector defined in \eqref{chiper-6}. 
    Then if $d \geq 3 $, 
    \begin{equation}
        \fint_{Q(0, \e)} \psi_{\e, \eta} \approx 1 \qquad \text{and} \qquad 
        \left(\fint_{Q(0, \e)} |\nabla \psi_{\e, \eta} |^2 \right)^{1/2} \approx \e^{-1} \eta^{\frac{d-2}{2}}. 
    \end{equation}
    If $d = 2$, we have  
     \begin{equation}
        \fint_{Q(0, \e) } \psi_{\e, \eta} \approx | \ln \eta| \qquad \text{and} 
        \qquad \left(\fint_{Q(0, \e) } |\nabla \psi_{\e, \eta} |^2 \right)^{1/2} \approx \e^{-1} | \ln \eta|^{1/2}. 
    \end{equation}
\end{lemma}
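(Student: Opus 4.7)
By $\varepsilon$-periodicity of $\omega_{\e,\eta}$ and of the equation in \eqref{chiper-6}, I would first rescale $y = x/\e$ and set $\tilde{\psi}(y) = \psi_{\e,\eta}(\e y)$, so that $\tilde{\psi}$ satisfies the fixed cell problem
\begin{equation*}
-\Delta \tilde{\psi} = \eta^{d-2} \ \text{ in } Y\setminus \eta \overline{Y^s}, \qquad \tilde{\psi}=0 \ \text{ on } \partial(\eta Y^s),\qquad \tilde\psi \ \text{is } Y\text{-periodic},
\end{equation*}
where $Y=Q(0,1)$. Since $|Y|=1$, one checks immediately that $\fint_{Q(0,\e)} \psi_{\e,\eta} = \int_Y \tilde{\psi}$ and $(\fint_{Q(0,\e)}|\nabla \psi_{\e,\eta}|^2)^{1/2} = \e^{-1}(\int_Y|\nabla \tilde{\psi}|^2)^{1/2}$, so it suffices to prove $\int_Y\tilde\psi \approx 1$ (resp.\ $|\ln\eta|$) and $\int_Y|\nabla\tilde\psi|^2 \approx \eta^{d-2}$ (resp.\ $|\ln\eta|$).

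Testing the PDE against $\tilde\psi$ gives the energy identity $\int_Y |\nabla \tilde{\psi}|^2 = \eta^{d-2} \int_Y \tilde{\psi}$, which reduces the gradient claim to the average claim. For the upper bound on the average, I would apply Cauchy--Schwarz together with the Poincar\'e inequality in perforated domains (Lemma \ref{lemma-P0} at scale $\e=1$, with $p=2$), which yields $\|\tilde{\psi}\|_{L^2(Y)}^2 \le C \sigma_1^2 \|\nabla \tilde{\psi}\|_{L^2(Y)}^2$ where $\sigma_1^2 = \eta^{2-d}$ for $d\ge 3$ and $\sigma_1^2 = |\ln\eta|$ for $d=2$. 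Combined with the energy identity, this gives $\int_Y \tilde{\psi} \le C\sigma_1 \eta^{(d-2)/2}(\int_Y \tilde{\psi})^{1/2}$, hence $\int_Y \tilde\psi \le C\sigma_1^2 \eta^{d-2}$, which is exactly $\lesssim 1$ if $d\ge 3$ and $\lesssim |\ln\eta|$ if $d=2$.

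For the lower bound, I would exploit the variational characterization: $\tilde{\psi}$ minimizes $J(v) = \tfrac{1}{2}\int_Y|\nabla v|^2 - \eta^{d-2}\int_Y v$ over periodic $v\in H^1(Y)$ with $v=0$ on $\eta Y^s$, so
\begin{equation*}
\tfrac{1}{2}\eta^{d-2}\int_Y \tilde\psi \;=\; -J(\tilde\psi) \;\ge\; \eta^{d-2}\int_Y v - \tfrac{1}{2}\int_Y|\nabla v|^2 \quad \text{for every admissible } v.
\end{equation*}
Plugging in $v = \alpha \chi_{1,\eta}$, where $\chi_{1,\eta}$ is the Allaire-type cell corrector of Sections \ref{section-C1}--\ref{section-C2} (admissible since it is periodic and vanishes on $\eta Y^s$), and optimizing over $\alpha>0$, I get $\int_Y \tilde{\psi} \ge \eta^{d-2}(\int_Y \chi_{1,\eta})^2/\int_Y |\nabla \chi_{1,\eta}|^2$. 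By Lemmas \ref{LP-2} and \ref{LPGRAD-2} (resp.\ \ref{LPD2-2} and \ref{LPGRADD2-2}), $\int_Y \chi_{1,\eta} = 1 + o(1)$ and $\int_Y|\nabla\chi_{1,\eta}|^2 \le C\eta^{d-2}$ for $d\ge 3$ (resp.\ $\le C|\ln\eta|^{-1}$ for $d=2$), giving the matching lower bound $\int_Y\tilde\psi \gtrsim 1$ (resp.\ $\gtrsim |\ln\eta|$). Substituting back into the energy identity yields $\int_Y |\nabla\tilde\psi|^2 \approx \eta^{d-2}$ (resp.\ $\approx |\ln\eta|$), completing the proof after rescaling.

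The main obstacle I anticipate is making the test-function lower bound tight, which requires the Dirichlet energy of $\chi_{1,\eta}$ to have a matching two-sided bound and not merely the upper bound stated in Lemmas \ref{LPGRAD-2} and \ref{LPGRADD2-2}. Fortunately this is straightforward from the explicit formula for $\chi_{1,\eta}$ on $B(y_z,1/6)\setminus \eta \overline{Y^s_z}$ and the sharp asymptotics \eqref{est-2} (resp.\ \eqref{asy-2}) of $\phi_*^z$: the leading term $-c_*^z\nabla(|y|^{2-d})$ (resp.\ $|y|^{-1}x/|x|$ scaled by $|\ln\eta|^{-1}$) produces a Dirichlet integral that is comparable to $\eta^{d-2}$ (resp.\ $|\ln\eta|^{-1}$) from below as well as above. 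With this, the variational estimate above is sharp and the lemma follows.
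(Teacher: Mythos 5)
Your proof is correct, and it takes a genuinely different route from the paper: the paper simply cites \cite[Lemma 4.4]{shen2023uniform} for the unit-scale case and observes that the general case follows by rescaling, whereas you give a self-contained argument built on the energy identity $\int_Y |\nabla \tilde\psi|^2 = \eta^{d-2}\int_Y \tilde\psi$ together with the Poincar\'e inequality of Lemma \ref{lemma-P0} for the upper bound and a variational test-function argument (using the corrector of Sections \ref{section-C1}--\ref{section-C2}) for the lower bound. This has the advantage of being internal to the present paper's toolkit and exposes the mechanism (the harmonic capacity of the hole controls both quantities via the corrector). One small point: your closing worry about needing a two-sided bound on $\int_Y |\nabla \chi_{1,\eta}|^2$ is unfounded. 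The inequality you derived, $\int_Y \tilde\psi \geq \eta^{d-2}\bigl(\int_Y \chi_{1,\eta}\bigr)^2 \big/ \int_Y |\nabla \chi_{1,\eta}|^2$, only needs an \emph{upper} bound on the Dirichlet energy of the test function (a smaller energy would only make the lower bound better), and that upper bound is exactly what Lemmas \ref{LPGRAD-2} and \ref{LPGRADD2-2} supply at $p=2$, $\e=1$. So the final paragraph can be dropped; the rest of the argument stands on its own.
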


\begin{proof}
The case $\e=1$ was proved in  \cite[Lemma 4.4]{shen2023uniform}. The case $0< \e<1$ follows by rescaling.
\end{proof}

\begin{lemma}\label{chieetta2-6}
     Let $\psi_{\e,\eta}$ be the same as in Lemma \ref{chieetta1-6}. Suppose $2< p< \infty$. 
     Then if $d \geq 3 $, 
    \begin{equation}
        \left(\fint_{Q(0, \e)} |\psi_{\e, \eta}|^p \right)^{1/p} \leq C
        \qquad \text{and} \qquad \left(\fint_{Q(0, \e) } |\nabla \psi_{\e, \eta} |^p \right)^{1/p} 
        \geq C\e^{-1} \eta^{\frac{d}{p}-1}. 
    \end{equation}
    If $d = 2$, we have  
  \begin{equation}
        \left(\fint_{Q(0, \e) } |\psi_{\e, \eta}|^p \right)^{1/p} 
        \leq C | \ln \eta| \qquad \text{and}
         \qquad \left(\fint_{Q(0, \e) } |\nabla \psi_{\e, \eta} |^p \right)^{1/p} \geq C \e^{-1} \eta^{\frac{2}{p}-1}. 
    \end{equation}
\end{lemma}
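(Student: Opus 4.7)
The plan is to reduce everything to the unit cell by rescaling. Setting $\tilde\psi(y) := \psi_{\e,\eta}(\e y)$, periodicity and \eqref{chiper-6} yield
\begin{equation*}
-\Delta \tilde\psi = \eta^{d-2} \ \text{ in } Y\setminus \eta\overline{Y^s},\qquad \tilde\psi = 0 \ \text{ on } \partial(\eta Y^s),\qquad \tilde\psi \text{ is } Y\text{-periodic},
\end{equation*}
together with $(\fint_{Q(0,\e)}|\psi_{\e,\eta}|^p)^{1/p} = (\fint_Y |\tilde\psi|^p)^{1/p}$ and $(\fint_{Q(0,\e)}|\nabla\psi_{\e,\eta}|^p)^{1/p} = \e^{-1}(\fint_Y |\nabla\tilde\psi|^p)^{1/p}$. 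For the upper $L^p$ bound I would first establish $\tilde\psi \le C$ in $Y$ when $d\ge 3$ (resp.\ $\tilde\psi \le C|\ln \eta|$ when $d=2$) by capacity comparison: $A\phi_*(y/\eta)$ (suitably normalized) serves as a periodic supersolution, absorbing the bounded right-hand side $\eta^{d-2}$ via the same device used to construct the correctors of Sections \ref{section-C1}--\ref{section-C2}. Since $|Y|=1$ the $L^p$ bound then follows from $\|\tilde\psi\|_{L^p(Y)} \le \|\tilde\psi\|_{L^\infty(Y)}$.

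The main step is the gradient lower bound, which I would obtain from a flux calculation on concentric spheres around the centre $y_0$ of $Y^s$. Integrating $-\Delta \tilde\psi = \eta^{d-2}$ over $Y\setminus \eta Y^s$ and using periodicity gives the total capacity flux identity $\int_{\partial(\eta Y^s)}\partial_\nu \tilde\psi\, d\sigma = \eta^{d-2}|Y\setminus \eta Y^s|$; applying the divergence theorem on $B(y_0,r)\setminus \eta Y^s$ for $c\eta \le r \le 1/4$ then produces
\begin{equation*}
\int_{\partial B(y_0,r)} \partial_\nu \tilde\psi\, d\sigma
= \eta^{d-2}\,|Y\setminus B(y_0,r)| \ge c\,\eta^{d-2}.
\end{equation*}
Since $|\partial B(y_0,r)| \approx r^{d-1}$, H\"older's inequality on the sphere gives $\int_{\partial B(y_0,r)}|\nabla \tilde\psi|^p\, d\sigma \ge c\,\eta^{(d-2)p}\, r^{-(d-1)(p-1)}$. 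The hypothesis $p > 2 \ge d/(d-1)$ makes the radial integral $\int_{c\eta}^{1/4} r^{-(d-1)(p-1)}\, dr$ non-integrable at $0$ and hence dominated by its lower endpoint; consequently
\begin{equation*}
\int_Y |\nabla\tilde\psi|^p \ge c\,\eta^{(d-2)p + 1 - (d-1)(p-1)} = c\,\eta^{d-p},
\end{equation*}
which gives $\|\nabla\tilde\psi\|_{L^p(Y)} \ge c\,\eta^{d/p-1}$ after taking $p$-th roots, and the claim for $d\ge 3$ follows by rescaling. For $d=2$ the argument is formally identical: the flux identity reads $\int_{\partial B(y_0,r)}\partial_\nu\tilde\psi\, d\sigma \ge c$, hence $\int_{\partial B}|\nabla\tilde\psi|^p\, d\sigma \ge c\, r^{1-p}$ and $\int_{c\eta}^{1/4} r^{1-p}\, dr \sim \eta^{2-p}$ for $p>2$, yielding $\|\nabla\tilde\psi\|_{L^p(Y)} \ge c\,\eta^{2/p-1}$.

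The main obstacle is establishing the $L^\infty$ bound for $\tilde\psi$ with the sharp scaling in $\eta$ --- in particular the logarithmic factor when $d=2$, where the right-hand side $\eta^{d-2} = 1$ is no longer small and the comparison must be carried out using the normalized potential $\phi_*(y/\eta)/|\ln\eta|$ as in Section \ref{section-C2}. Once the capacity flux identity is in place the gradient lower bound is a clean sphere-shell computation, and its exponent matches the target precisely because the weight $r^{-(d-1)(p-1)}$ is non-integrable at zero exactly when $p > d/(d-1)$, which is automatic under the hypothesis $p > 2$.
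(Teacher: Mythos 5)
Your radial flux argument for the gradient lower bound is correct and elegant, and it is a genuine self-contained proof (the paper simply defers this lemma to [shen2023uniform, Lemma 5.3]). The identity $\int_{\partial B(y_0,r)}\partial_\nu\tilde\psi\,d\sigma=\eta^{d-2}|Y\setminus B(y_0,r)|$ follows by integrating $-\Delta\tilde\psi=\eta^{d-2}$ over $Y\setminus B(y_0,r)$ and using that the periodic boundary contribution vanishes; H\"older on each sphere and radial integration then yield $\int_Y|\nabla\tilde\psi|^p\ge c\,\eta^{d-p}$, and the exponent computation $(d-2)p+1-(d-1)(p-1)=d-p$ is right. The hypothesis $p>2\ge d/(d-1)$ is exactly what makes the radial weight non-integrable at $0$, so the estimate is sharp in the stated range.

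The upper $L^p$ bound, however, has a gap as outlined. The candidate $A\phi_*(y/\eta)$ is not $Y$-periodic, so it cannot be invoked as a barrier on the torus: the comparison principle in $Y\setminus\eta\overline{Y^s}$ (with periodic identifications) requires control only on $\partial(\eta Y^s)$, which a non-periodic function does not provide. Moreover the right-hand side $\eta^{d-2}$ cannot be absorbed by a periodic quadratic correction, since $-\Delta q=\eta^{d-2}$ has no periodic solution; and the "device" of Sections \ref{section-C1}--\ref{section-C2} builds correctors $\chi_{\e,\eta}$ that are close to $1$ and solve a \emph{homogeneous} equation, so it does not transfer to $\tilde\psi$, whose cell average is $\approx 1$ (resp.\ $\approx|\ln\eta|$), not small. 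A route that actually closes: take the $L^1$ average and $L^2$ gradient bounds already recorded in Lemma \ref{chieetta1-6}, apply the Sobolev--Poincar\'e inequality to get $\|\tilde\psi\|_{L^{2^*}(Y)}\le\|\tilde\psi-\fint_Y\tilde\psi\|_{L^{2^*}}+|\fint_Y\tilde\psi|\le C\eta^{(d-2)/2}+C\le C$ for $d\ge 3$ (and $\|\tilde\psi\|_{L^p(Y)}\le C|\ln\eta|^{1/2}+C|\ln\eta|\le C|\ln\eta|$ for every $p<\infty$ when $d=2$), and then for $p>2^*$, $d\ge 3$, run De Giorgi/Moser on the torus using the admissible test functions $(\tilde\psi-k)_+$, which vanish on the hole; this yields $\sup\tilde\psi\le C(\|\tilde\psi\|_{L^2}+\eta^{d-2})\le C$.
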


\begin{proof}
The case $\e=1$ was proved in  \cite[Lemma 5.3]{shen2023uniform}, while
the general case follows by rescaling.
\end{proof}

For a bounded Lipschitz domain $\Omega$, define
\begin{equation}\label{O-9e}
\Omega_{\e, \eta} = \Omega \setminus \bigcup_z \e (z + \eta \overline{Y^s}), 
\end{equation}
where  the union is taken over those $z$'s in $\mathbb{Z}^d$  for which $\e Q(z, 1)\subset \Omega$; i.e., $\Omega_{\e, \eta}$ is given by 
\eqref{O-e} with  $x_z=0$ and $Y_z^s =Y^s$ for all $z\in \Z^d$.
Recall that $A_p (\Omega_{\e, \eta})$ and $B_p(\Omega_{\e, \eta})$ denote the smallest bounding constants for which \eqref{W1p-0} holds,
where $u_\e$ is the solution of  the Dirichlet problem \eqref{DP-0} in $\Omega_{\e, \eta}$.
Similarly, we use $C_p (\Omega_{\e, \eta})$ and $D_p(\Omega_{\e, \eta})$ to denote the smallest bounding constants for which the $L^p$ estimate,
\begin{equation}\label{9-0}
\| u_\e \|_{L^p(\Omega_{\e, \eta})}
\le C_p (\Omega_{\e, \eta}) \| f \|_{L^p(\Omega_{\e, \eta})}
+ D_p (\Omega_{\e, \eta}) \|F \|_{L^p(\Omega_{\e, \eta})},
\end{equation}
holds. By a duality argument we have
\begin{equation}\label{dual}
A_p(\Omega_{\e, \eta})
=A_{p^\prime} (\Omega_{\e, \eta}), \quad D_p (\Omega_{\e, \eta} ) =D_{p^\prime}(\Omega_{\e, \eta}) 
\quad \text{ and }
\quad
B_p(\Omega_{\e, \eta}) = C_{p^\prime} (\Omega_{\e, \eta}),
\end{equation}
where $1< p< \infty$ and $p^\prime =\frac{p}{p-1}$.

We begin with the sharpness of $D_p(\Omega_{\e, \eta})$.

\begin{thm}\label{thm-9D}
Let $\Omega$ be a bounded Lipschitz domain and
$\Omega_{\e, \eta}$ be given by \eqref{O-9e}.
 Then, for $d \geq 2$ and $1< p< \infty$,
    \begin{equation}
    D_p(\Omega_{\e, \eta}) \approx \min (\sigma_\e^2, 1).
    \end{equation}
   \end{thm}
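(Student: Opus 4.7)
The plan is to prove matching upper and lower bounds on $D_p(\Omega_{\e,\eta})$. For the \textbf{upper bound}, when $2\le p<\infty$ I would set $f=0$ in Theorem \ref{lemma-P1} to obtain $\|u_\e\|_{L^p(\Omega)}\le C\min(\sigma_\e^2,1)\|F\|_{L^p(\Omega)}$, giving $D_p(\Omega_{\e,\eta})\le C\min(\sigma_\e^2,1)$; for $1<p<2$ I would exploit the self-adjointness of $-\Delta$ on $H_0^1(\Omega_{\e,\eta})$, which makes the solution operator $F\mapsto u_\e$ self-adjoint on $L^2$, so a standard duality argument gives $D_p(\Omega_{\e,\eta})=D_{p'}(\Omega_{\e,\eta})$ and reduces this case to the previous one.

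For the \textbf{lower bound}, I would take $F\equiv 1$ on $\Omega$ and let $u_\e\in H_0^1(\Omega_{\e,\eta})$ be the corresponding solution. Since $\|F\|_{L^p(\Omega_{\e,\eta})}\approx 1$ and $u_\e\ge 0$ by the maximum principle, H\"older's inequality will reduce the goal to the single scalar estimate $\int u_\e\ge c\min(\sigma_\e^2,1)$, which is valid simultaneously for every $p>1$. The test function will be built from a rescaling of the $\e$-periodic corrector $\psi_{\e,\eta}$ of \eqref{chiper-6}: set $\tilde\psi_{\e,\eta}=\psi_{\e,\eta}$ for $d\ge 3$ and $\tilde\psi_{\e,\eta}=\psi_{\e,\eta}/|\ln\eta|$ for $d=2$, so that in both regimes $-\Delta\tilde\psi_{\e,\eta}=\sigma_\e^{-2}$ in $\omega_{\e,\eta}$, $\tilde\psi_{\e,\eta}=0$ on the holes, and Lemmas \ref{chieetta1-6} and \ref{chieetta2-6} yield uniformly $\fint_{Q(0,\e)}\tilde\psi_{\e,\eta}\approx 1$, $\|\tilde\psi_{\e,\eta}\|_{L^q(\Omega)}\le C$ for every $q<\infty$, and $\|\nabla\tilde\psi_{\e,\eta}\|_{L^2(\Omega)}\le C\sigma_\e^{-1}$. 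After fixing $\phi_0\in C_0^\infty(\Omega)$ with $\phi_0\ge 0$ and $\int\phi_0=1$, I would set $g=\sigma_\e^2\phi_0\tilde\psi_{\e,\eta}$; this lies in $H_0^1(\Omega_{\e,\eta})$ because $\phi_0$ has compact support in $\Omega$ and $\tilde\psi_{\e,\eta}$ vanishes on every hole.

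Testing $-\Delta u_\e=1$ against $g$ produces
\[
\int\nabla u_\e\cdot\nabla g\;=\;\int g\;=\;\sigma_\e^2\int_\Omega\phi_0\tilde\psi_{\e,\eta}\;\ge\;c\,\sigma_\e^2,
\]
where the last inequality will be a Riemann-sum type estimate combining $\fint_{\e Q_z}\tilde\psi_{\e,\eta}\ge c_1>0$ with the smoothness of $\phi_0$. Expanding $\nabla g$ via Leibniz and inserting the bounds above gives $\|\nabla g\|_{L^2(\Omega)}\le C(\sigma_\e+\sigma_\e^2)$, so combining Cauchy--Schwarz with the energy identity $\|\nabla u_\e\|_{L^2}^2=\int u_\e$ yields $c\sigma_\e^2\le(\int u_\e)^{1/2}\cdot C(\sigma_\e+\sigma_\e^2)$, hence $\int u_\e\ge c'\sigma_\e^4/(\sigma_\e+\sigma_\e^2)^2\approx\min(\sigma_\e^2,1)$, as required.

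\textbf{The main obstacle} will be the two-scale nature of $\nabla g$: the piece $\phi_0\nabla\tilde\psi_{\e,\eta}$ has $L^2$ norm of order $\sigma_\e$ and controls the regime $\sigma_\e\le 1$, while $\tilde\psi_{\e,\eta}\nabla\phi_0$ has $L^2$ norm of order $\sigma_\e^2$ and controls $\sigma_\e\ge 1$; obtaining both orders sharp requires the precise $L^2$ and $L^q$ control of $\tilde\psi_{\e,\eta}$ from Lemmas \ref{chieetta1-6} and \ref{chieetta2-6}, together with the uniform Riemann-sum lower bound $\int\phi_0\tilde\psi_{\e,\eta}\ge c$, neither of which can be relaxed without weakening the final estimate.
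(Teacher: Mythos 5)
Your proposal is correct, and the lower bound is obtained by a genuinely different route. Both arguments use the $\e$-periodic corrector $\psi_{\e,\eta}$ of \eqref{chiper-6} and the bounds in Lemmas \ref{chieetta1-6} and \ref{chieetta2-6}, but they deploy it differently. The paper first reduces to $1<p\le 2$ by duality, localizes $\psi_{\e,\eta}$ with a cutoff $\phi\in C_0^\infty(Q(0,2R\e))$ living at the variable microscopic scale $R\e$, plugs $\psi_{\e,\eta}\phi$ and the explicit right-hand side \eqref{laplace-6} into the defining inequality for $D_p$, and then optimizes over the free parameter $R$, running a four-way case analysis in $d$ and $\sigma_\e$. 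You instead fix a single macroscopic cutoff $\phi_0\in C_0^\infty(\Omega)$, take $F\equiv 1$, and test the weak formulation against $g=\sigma_\e^2\phi_0\tilde\psi_{\e,\eta}$, where $\tilde\psi_{\e,\eta}$ is the normalization for which $-\Delta\tilde\psi_{\e,\eta}=\sigma_\e^{-2}$ in both dimensions. This converts the problem into a lower bound on the scalar $\int u_\e$ via Cauchy--Schwarz against $\|\nabla g\|_{L^2}$ and the energy identity $\|\nabla u_\e\|_{L^2}^2=\int u_\e$; the two terms in $\nabla g$ then capture the regimes $\sigma_\e\le 1$ and $\sigma_\e\ge 1$ automatically, with no optimization over $R$ and no case analysis. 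A further simplification is that the scalar estimate $\int u_\e\ge c\min(\sigma_\e^2,1)$ is $p$-independent, so a single H\"older inequality gives the lower bound simultaneously for every $p>1$, whereas the paper works directly with $L^p(Q(0,\e))$ averages of $\psi_{\e,\eta}$ and $\nabla\psi_{\e,\eta}$ (which is why it needs the duality reduction to $p\le 2$). The only facts that need to be spelled out in your route are the nonnegativity of $\psi_{\e,\eta}$ (maximum principle for the Poisson problem with positive source and zero Dirichlet data on the hole boundary), the membership $g\in H_0^1(\Omega_{\e,\eta})$ (the support of $\phi_0$ is compactly contained in $\Omega$, and $\tilde\psi_{\e,\eta}$ vanishes on every hole of $\omega_{\e,\eta}$, hence on the fewer holes of $\Omega_{\e,\eta}$), and the Riemann-sum bound $\int_\Omega\phi_0\tilde\psi_{\e,\eta}\ge c$, which follows from $\fint_{\e Q_z}\tilde\psi_{\e,\eta}\approx 1$, periodicity, and $\phi_0\ge c_0$ on a fixed compact subset, valid once $\e$ is below a threshold depending only on $\phi_0$; the remaining range of $\e$ bounded away from zero is trivial since both sides of $D_p\approx\min(\sigma_\e^2,1)$ are bounded above and below.
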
 

\begin{proof}
The upper bound $D_p(\Omega_{\e, \eta})  \le C \min (\sigma_\e^2, 1)$ is contained in  Theorem \ref{lemma-P1}.
To show the lower bound $D_p(\Omega_{\e, \eta}) \ge c \min (\sigma_\e^2, 1)$,  by \eqref{dual}, we may  assume $1 < p \leq 2$. 
Without loss of generality we may also assume $Q(0, 4r_0) \subset \Omega$.

    Suppose $\e$ is sufficiently small and $R\e \le r_0$ for some $R\in \mathbb{N}$.
    Let $\phi\in C_0^\infty (Q(0, 2R\e ))$ be a cut-off function such that $0 \leq \phi \leq 1$,  $\phi=1$ in $Q(0, R\e)$,
    $|\nabla \phi |\le C/( R\e )$ and $|\nabla^2 \phi |\le C / (R\e)^2$.
    Note that
        \begin{equation}\label{laplace-6}
        - \Delta (\psi_{\e, \eta} \phi) = \e^{-2} \eta^{d-2} \phi - 2 \nabla \psi_{\e, \eta} \cdot \nabla \phi - \psi_{\e,\eta} \Delta \phi
    \end{equation}
   in $\Omega_{\e, \eta} $ and  $\psi_{\e, \eta} \phi =0$ on $\partial \Omega_{\e, \eta}$.
   It follows that
   \begin{equation*}
   \begin{aligned}
&        cR^{d/p} \| \psi_{\e,\eta} \|_{L^p(Q(0, \e) )} 
\leq \| \psi_{\e, \eta} \phi\|_{L^p(\Omega_{\e, \eta}) }\\ 
&\leq C D_p(\Omega_{\e, \eta}) 
\left\{ \e^{-2 + \frac{d}{p}} \eta^{d-2} R^{\frac{d}{p}} 
+ \e^{-1} R^{\frac{d}{p}-1} \| \nabla \psi_{\e, \eta} \|_{L^p(Q(0, \e))} 
+ \e^{-2} R^{\frac{d}{p}-2} \| \psi_{\e, \eta} \|_{L^p(Q(0, \e) )} \right\}, 
        \end{aligned}
   \end{equation*}
where we have used the periodicity of $\psi_{\epsilon, \eta}$.
 This gives 
\begin{equation}\label{Dpres-6}
\begin{aligned}
    &  \e^{\frac{d}{p}} \fint_{Q(0, \e) } |\psi_{\e, \eta}| \\\ 
    &\le C D_p{(\Omega_{\e, \eta} )} \left\{ \e^{\frac{d}{p}-2} \eta^{d-2} 
    + R^{-1} \e^{\frac{d}{p}-1}\left( \fint_{Q(0, \e) } |\nabla \psi_{\e, \eta} |^2 \right)^{1/2} + (\e R)^{-2} \| \psi_{\e, \eta} \|_{L^p(Q(0, \e) )} \right\},
    \end{aligned}
\end{equation}
where we have used the fact $1<p\le 2$.

\medskip

\textit{\textbf{Case 1:} $d \geq 3$ and $0< \sigma_\e \leq 1.  $ }

\medskip

By applying Lemmas \ref{chieetta1-6} and \ref{chieetta2-6} to \eqref{Dpres-6}, we obtain 
\begin{equation}\label{Dd3post-6}
       c  \leq  D_p{(\Omega_{\e, \eta} )} \left\{ \e^{-2} \eta^{d-2} + R^{-1} \e^{-2} \eta^\frac{d-2}{2} + (\e R)^{-2}  \right\}.
\end{equation}
Since $\sigma_\e = \e \eta^{1-\frac{d}{2}}\le 1$,
 we may choose $R \in \mathbb{N}$ such that $R \approx  \eta^{1-\frac{d}{2}}$ in \eqref{Dd3post-6} to get
\begin{equation*}
       c   \leq  D_p{(\Omega_{\e, \eta} )} \cdot  \e^{-2} \eta^{d-2}.
\end{equation*}
Hence, 
$
         D_p(\Omega_{\e, \eta} )\ge c \sigma_\e^2 = c \min (\sigma_\e^2, 1).
$

\medskip

\textit{\textbf{Case 2:} $d \geq 3$ and $\sigma_\e \ge 1$.}

\medskip

In this case we choose $R\in \mathbb{N}$ such that $R \approx \e^{-1}$ in \eqref{Dd3post-6}. It follows that 
\begin{equation*}
    c \leq D_p(\Omega_{\e, \eta}) \left\{ \e^{-2}\eta^{d-2} + \e^{-1} \eta^{\frac{d-2}{2}} + 1 \right\}.
\end{equation*}
Since $\sigma_\e\ge 1$, we obtain 
$
    D_p(\Omega_{\e, \eta} ) \ge c = c \min (\sigma_\e^2, 1).
$

\medskip

\textit{\textbf{Case 3:} $d = 2$ and $\sigma_\e  \leq 1$. } 

\medskip

Applying Lemmas \ref{chieetta1-6} and \ref{chieetta2-6} to \eqref{Dpres-6} now yields
\begin{equation}\label{Dd2post-6}
c | \ln \eta| \leq D_p(\Omega_{\e, \eta} ) \left\{\e^{-2} + R^{-1} \e^{-2} | \ln \eta|^{\frac{1}{2}} + (\e R)^{-2} | \ln \eta| \right\}. 
\end{equation}
Picking $R \in \mathbb{N}$ such that   $R \approx  | \ln \eta|^{\frac{1}{2}}$ in \eqref{Dd2post-6} gives 
\begin{equation*}
    D_p(\Omega_{\e, \eta} )  \ge c\,  \e^2 |\ln \eta| =c \min (\sigma_\e^2, 1).
\end{equation*}

\textit{\textbf{Case 4:} $d = 2$ and $\sigma_\e  \geq 1 $. } 

\medskip

Choosing $R\in \mathbb{N}$ such that $ R \approx  \e^{-1}$ in \eqref{Dd2post-6} yields 
\begin{equation*}
\begin{aligned}
    c | \ln \eta| &\leq D_p(\Omega_{\e, \eta} ) \left\{ \e^{-2} + \e^{-1} |\ln \eta|^{\frac{1}{2}} + | \ln \eta|\right\} \\ 
    &\leq C  D_p(\Omega_{\e, \eta} ) | \ln \eta|. 
\end{aligned}
\end{equation*}
It follows that $ D_p(\Omega_\epsilon)\ge c =c \min (\sigma_\e^2, 1)$.
\end{proof}   

Next, we consider the case $B_p(\Omega_{\e, \eta})$ for $1< p\le 2$.

\begin{thm}\label{CPthm-6}
Let $\Omega$ be a bounded Lipschitz domain and $\Omega_{\e, \eta}$ be given by \eqref{O-9e}.
Then
      \begin{equation}\label{low-B3}
    B_p (\Omega_{\e, \eta}) \approx \min (\sigma_\e, 1)
    \end{equation}
    for $d\ge 2$ and $1< p\le 2$,
\end{thm}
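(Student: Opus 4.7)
The upper bound $B_p(\Omega_{\e, \eta}) \le C\min(\sigma_\e, 1)$ is Theorem \ref{addin-thm}, so the plan is to exhibit a matching lower bound $B_p(\Omega_{\e, \eta}) \ge c\min(\sigma_\e, 1)$. Only the regime $\sigma_\e \le 1$ is nontrivial, and I assume this. A direct analog of the construction in Theorem \ref{thm-9D} using $u_\e = \psi_{\e, \eta}\phi$ and $F = -\Delta u_\e$ yields a ratio $\|\nabla u_\e\|_{L^p}/\|F\|_{L^p}$ that equals $\sigma_\e$ only at $p = 2$ and is strictly smaller for $1 < p < 2$, so a different right-hand side $F$, concentrated at the mesoscopic scale $\sigma_\e$, is required.

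First I would fix $x_0 \in \Omega$ with $B(x_0, 4r_0) \subset \Omega$ and $\sigma_\e \le r_0$ (the opposite case is absorbed into constants), and set $F = \mathbf{1}_{B(x_0, \sigma_\e)}$, so $\|F\|_{L^p(\Omega)} \approx \sigma_\e^{d/p}$. Let $u_\e \in H_0^1(\Omega_{\e, \eta})$ solve $-\Delta u_\e = F$. The claim to establish is $\|\nabla u_\e\|_{L^p(\Omega_{\e, \eta})} \ge c\sigma_\e^{1+d/p}$, which then yields $B_p \ge c\sigma_\e$. The underlying principle, in line with Sections \ref{section-C1}--\ref{section-Con}, is that above scale $\e$ the Dirichlet Laplacian on $\Omega_{\e, \eta}$ is modeled by the Schr\"odinger operator $-\Delta + \sigma_\e^{-2}V_\e$ on $\Omega$, with $V_\e = V(x/\e)$ as in \eqref{V}. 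Let $v_\e \in H_0^1(\Omega)$ solve $(-\Delta + \sigma_\e^{-2}V_\e)v_\e = F$ in $\Omega$. Because $V_\e$ is bounded above and below by positive constants, the Green's function of $-\Delta + \sigma_\e^{-2}V_\e$ has length scale $\sigma_\e$; combined with the choice of $F$ supported on the ball of that radius, this produces $v_\e \sim \sigma_\e^2$ on $B(x_0, \sigma_\e)$ with decay on the same scale, so that $|\nabla v_\e| \gtrsim \sigma_\e$ on an annulus of volume $\sim \sigma_\e^d$, and integration gives $\|\nabla v_\e\|_{L^p(\Omega)} \ge c\sigma_\e^{1 + d/p}$.

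To transfer the lower bound from $v_\e$ to $u_\e$, the plan is to adapt the approximation of Theorems \ref{cor2-2} and \ref{versionD2-2} to the inhomogeneous problem and obtain $u_\e = \chi_{\e, \eta} v_\e + r_\e$ with $\|\nabla r_\e\|_{L^2(\Omega_{\e, \eta})}$ of strictly lower order than $\|\nabla v_\e\|_{L^p(\Omega)}$. For $p \le 2$, Lemmas \ref{LPGRAD-2} and \ref{LPGRADD2-2} combined with $\|v_\e\|_\infty \lesssim \sigma_\e^2$ show that the corrector-gradient contribution $\|(\nabla \chi_{\e, \eta}) v_\e\|_{L^p}$ is lower order than $\|\chi_{\e, \eta} \nabla v_\e\|_{L^p} \approx \|\nabla v_\e\|_{L^p}$, so the lower bound transfers: $\|\nabla u_\e\|_{L^p(\Omega_{\e, \eta})} \ge c\sigma_\e^{1 + d/p}$, hence $B_p \ge c\sigma_\e$. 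The main obstacles will be (i) deriving the sharp pointwise lower bound $|\nabla v_\e| \gtrsim \sigma_\e$ on the annulus, via a Green's-function analysis of $-\Delta + \sigma_\e^{-2}V_\e$ that goes beyond the $L^p$ upper bounds of Theorem \ref{corint-3}; and (ii) extending the corrector approximation of Section \ref{section-Con} from the boundary-data case to the inhomogeneous right-hand side with controlled error. In dimension $d = 2$, the analogous analysis uses $\sigma_\e = \e|\ln \eta|^{1/2}$ and the corrector from Section \ref{section-C2}.
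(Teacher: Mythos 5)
Your proposal departs substantially from the paper's argument and leaves two essential steps unproven, so it is not a complete proof.

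The paper's lower-bound argument for $1<p<2$ is in fact a direct extension of the $D_p$ argument of Theorem \ref{thm-9D}, using the same test function $\psi_{\e,\eta}\phi$ and the same $F=-\Delta(\psi_{\e,\eta}\phi)$. Your opening claim — that this construction "yields a ratio $\|\nabla u_\e\|_{L^p}/\|F\|_{L^p}$ that equals $\sigma_\e$ only at $p=2$" and is therefore inadequate for $p<2$ — misses the key device in the paper: instead of estimating $\|\nabla(\psi_{\e,\eta}\phi)\|_{L^p}$ from below directly, the paper inserts the Sobolev inequality $\|\psi_{\e,\eta}\phi\|_{L^q(\Omega_{\e,\eta})}\le C\|\nabla(\psi_{\e,\eta}\phi)\|_{L^p(\Omega_{\e,\eta})}$ with $\tfrac1q=\tfrac1p-\tfrac1d$. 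This replaces the delicate lower bound on $\|\nabla\psi_{\e,\eta}\|_{L^p}$ (which indeed degrades for $p<2$, as you observe) with the much more robust lower bound $\|\psi_{\e,\eta}\|_{L^q(Q(0,\e))}\gtrsim \e^{d/q}$ (resp.\ $\e^{d/q}|\ln\eta|$ for $d=2$) coming from $\fint_{Q(0,\e)}\psi_{\e,\eta}\approx 1$ in Lemma \ref{chieetta1-6}. Combined with the explicit form of $-\Delta(\psi_{\e,\eta}\phi)$ from \eqref{laplace-6} and the choice $R\approx\eta^{-(d-2)/2}$ (resp.\ $R\approx|\ln\eta|^{1/2}$), this gives exactly $B_p\ge c\sigma_\e$ in a few lines. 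The same Sobolev device is what the paper uses again for $A_p$ in Theorem \ref{thm-A-low-1}.

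Your alternative route via $F=\mathbf{1}_{B(x_0,\sigma_\e)}$ and the Schr\"odinger model has two genuine gaps, which you flag but do not resolve: (i) a quantitative lower bound $|\nabla v_\e|\gtrsim\sigma_\e$ on an annulus of radius $\sim\sigma_\e$, which requires a Green's-function lower-bound analysis for $-\Delta+\sigma_\e^{-2}V_\e$ with a merely piecewise-constant, non-radial $V_\e$ — nothing in Section \ref{section-I} (which gives only upper bounds) delivers this; and (ii) an inhomogeneous analogue of Theorems \ref{cor2-2}/\ref{versionD2-2}, which are stated only for boundary-data problems. Beyond these, the transfer step has a scaling problem that is not merely technical: the $L^2$ error bound in Theorems \ref{cor2-2}/\ref{versionD2-2} is of the form $\psi(\eta)\big(\sigma_\e^{-1}\|v_\e\|_{L^q}+\|\nabla v_\e\|_{L^q}\big)$ for some $q>2$, hence of order $\psi(\eta)\,\sigma_\e^{1+d/q}$, while the quantity you need to dominate is $\|\nabla v_\e\|_{L^p}\sim\sigma_\e^{1+d/p}$ with $p\le 2<q$. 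Since $\sigma_\e^{1+d/q}\gg\sigma_\e^{1+d/p}$ as $\sigma_\e\to0$, the error swamps the signal whenever $\eta$ is held fixed and $\e\to0$ (a legitimate sub-regime of $\sigma_\e\le 1$), unless the convergence estimate is localized to a neighborhood of $B(x_0,\sigma_\e)$; you do not address this. So even granting (i) and (ii), the transfer from $v_\e$ to $u_\e$ would fail uniformly in the stated parameter range as written.
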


\begin{proof}
It follows from Theorem \ref{lemma-P1} that 
\begin{equation*}
C_p(\Omega_{\e, \eta} ) \le C \min (\sigma_\e, 1)
\end{equation*}
for $2\le p< \infty$.
Since $B_p(\Omega_{\e, \eta}) = C_{p^\prime} (\Omega_{\e, \eta})$, we obtain the upper bound
\begin{equation*}
B_p(\Omega_{\e, \eta}) \le C \min (\sigma_\e, 1)
\end{equation*}
for $1< p\le 2$.
To establish the lower bounds, we first note that
$D_2 (\Omega_{\e, \eta}) \le C\min (\sigma_\e, 1)  B_2 (\Omega_{\e, \eta})$,  which follows \eqref{P00}.
As a result, by Theorem \ref{thm-9D}, 
$B_2 (\Omega_{\e, \eta} ) \ge c \min (\sigma_\e, 1)$.

For the case $1<p< 2$, 
we proceed as in the proof of Theorem \ref{thm-9D}.
It follows from \eqref{laplace-6} that 
    \begin{equation*}
    \begin{aligned}
        & \|\psi_{\e, \eta} \phi \|_{L^q(\Omega_{\e, \eta})}\le C \| \nabla(\psi_{\e,\eta} \phi) \|_{L^p(\Omega_{\e, \eta})}  \\
        & \le C B_p(\Omega_{\e, \eta} ) \left\{ \e^{\frac{d}{p}-2}\eta^{d-2} R^{\frac{d}{p}} 
        + \e^{-1} R^{\frac{d}{p}-1 } \| \nabla \psi_{\e, \eta} \|_{L^p(Q(0, \e) )} + \e^{-2} R^{\frac{d}{p}-2}\| \psi_{\e, \eta} \|_{L^p(Q(0, \e) )}     \right\}, 
        \end{aligned}
    \end{equation*}
    where $\frac{1}{q}=\frac{1}{p} -\frac{1}{d}$.
This implies that 
    \begin{equation}\label{Cppre-6}
         \fint_{Q(0, \e) } |\psi_{\e, \eta}| \leq C B_p(\Omega_{\e, \eta} ) \left\{\e^{-1}\eta^{d-2} R  
         + \left(\fint_{Q(0, \e) } |\nabla \psi_{\e, \eta}|^2\right)^{{1}/{2}} +  (\e R)^{-1}
         \left(\fint_{Q(0, \e)} |\psi_{\e, \eta} |^p \right)^{1/p}  \right\}.
    \end{equation}
    
    \medskip
    
    \textit{\textbf{Case 1:} $d\geq 3$ and $\sigma_\e  \leq 1.  $ } 
    
    \medskip
    
    We apply Lemmas \ref{chieetta1-6} and \ref{chieetta2-6} to \eqref{Cppre-6} to obtain 
    \begin{equation}\label{Cppost1-6}
        c \leq B_p(\Omega_{\e, \eta} ) \left\{\e^{-1} \eta^{d-2} R + \e^{-1} \eta^{\frac{d-2}{2}} + \e^{-1} R^{-1}  \right\}.
    \end{equation}
Letting  $R\approx  \eta^{-\frac{d-2}{2}}$ in \eqref{Cppost1-6} yields 
\begin{equation*}
c \leq B_p(\Omega_\epsilon) \cdot \e^{-1} \eta^{\frac{d-2}{2}}.
\end{equation*}
Thus,
$ B_p(\Omega_{\e, \eta})  \ge  c \sigma_\e = c \min (\sigma_\e, 1)$.

\medskip

\textit{\textbf{Case 2:} $d \geq 3$ and $\sigma_\e  \geq 1. $}

\medskip

Let $R\approx \e^{-1}$ in \eqref{Cppost1-6}. Then 
\begin{equation*}
c \leq B_p(\Omega_{\e, \eta} ) \left\{ \e^{-2} + \e^{-1} \eta^{\frac{d-2}{2}} + C  \right\}.
\end{equation*}
Thus, $B_p(\Omega_{\e, \eta}) \ge c = c \min (\sigma_\e, 1)$.

\medskip

\textit{\textbf{Case 3:} $d=2$ and $ \sigma_\e  \leq 1 $.}

\medskip
 
Applying Lemmas \ref{chieetta1-6} and \ref{chieetta2-6} to \eqref{Cppre-6} now yields
\begin{equation}\label{Cppost2-6}
c | \ln \eta| \leq B_p(\Omega_{\e, \eta} ) \left\{\e^{-1}R +  \e^{-1} | \ln \eta|^{\frac{1}{2}} + (\e R)^{-1} | \ln \eta| \right\}.
\end{equation}
Picking $R \approx   | \ln \eta|^{\frac{1}{2}}$ in \eqref{Cppost2-6} gives 
\begin{equation*}
    c |\ln \eta| \leq B_p(\Omega_{\e, \eta} ) \cdot  \e^{-1} | \ln \eta|^{\frac{1}{2}}.
\end{equation*}
Thus,
$
B_p(\Omega_{\e, \eta}) \ge c \e |\ln \eta|^{1/2} =c \min (\sigma_\e, 1)$.

\medskip

\textit{\textbf{Case 4:} $d=2$ and $\sigma_\e  \geq 1$. }

\medskip

Taking  $R \approx  \e^{-1}$ in \eqref{Cppost2-6} leads to 
\begin{equation*}
    \begin{aligned}
        c | \ln \eta| & \leq B_p(\Omega_{\e, \eta} ) 
        \left\{ \e^{-2} + \e^{-1}| \ln \eta|^{\frac{1}{2}} +  | \ln \eta| \right\} \\ & \leq  C B_p(\Omega_{\e, \eta} )| \ln \eta|. 
    \end{aligned}
\end{equation*}
This implies  that $B_p(\Omega_{\e, \eta})  \ge c =c \min (\sigma_\e, 1)$.
\end{proof}

We now consider $B_p(\Omega_{\e, \eta})$ for $2< p< \infty$.

\begin{thm}\label{B-low-1}
    Let $\Omega_{\e, \eta}$ be given by \eqref{O-9e}.
     Assume $d \geq 3$. Then if $0< \sigma_\e \leq 1$ and $2< p< \infty$, 
    \begin{equation}\label{Low-B-1}
B_p (\Omega_{\e, \eta} ) 
\approx  \e \eta^{1-d + \frac{d}{p}}.
\end{equation}
If  $\sigma_\e \geq 1$ and $2< p< d$, we have 
\begin{equation}\label{low-B-2}
B_p(\Omega_{\e, \eta} ) \approx 
1+ \e^{-1} \eta^{\frac{d}{p}-1}.
\end{equation}
\end{thm}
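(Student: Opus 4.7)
The plan is to mirror the test-function argument of Theorem \ref{CPthm-6}, working directly with the gradient $L^p$-definition of $B_p$ instead of the Sobolev embedding used there. Specifically, apply $\|\nabla u\|_{L^p(\Omega_{\e, \eta})} \le B_p(\Omega_{\e, \eta})\|\Delta u\|_{L^p(\Omega_{\e, \eta})}$ to the test function $u = c_*\psi_{\e, \eta}\phi$, where $\psi_{\e, \eta}$ is the $\e$-periodic corrector of \eqref{chiper-6}, $\phi\in C_0^\infty(Q(0, 2R\e))$ satisfies $\phi\equiv 1$ on $Q(0, R\e)$ with $|\nabla^j\phi|\le C(R\e)^{-j}$, $R\in\N$ is to be chosen, and $c_*\in\{1, \sigma_\e^2\}$ is a normalization selected by the case. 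The $\e$-periodicity of $\psi_{\e, \eta}$ together with Lemmas \ref{chieetta1-6} and \ref{chieetta2-6} gives $\|\nabla u\|_{L^p(\Omega_{\e, \eta})}\ge c\,c_*R^{d/p}\e^{d/p-1}\eta^{d/p-1}$ and $\|\Delta u\|_{L^p(\Omega_{\e, \eta})}\le C c_*R^{d/p}\e^{d/p-2}(\eta^{d-2} + R^{-1}\eta^{d/p-1} + R^{-2})$. Dividing (so that $c_*$ cancels) yields the key inequality
\begin{equation*}
c\,\e\eta^{d/p-1} \;\le\; B_p(\Omega_{\e, \eta})\,\bigl(\eta^{d-2} + R^{-1}\eta^{d/p-1} + R^{-2}\bigr).
\end{equation*}

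For Case \eqref{Low-B-1} ($0<\sigma_\e\le 1$, $2<p<\infty$), I would take $c_*=1$ and choose $R\approx \eta^{d/p-d+1}$ (an integer $\ge 1$, since $d/p-d+1<0$ for $p>2$ and $d\ge 3$). Then $R^{-1}\eta^{d/p-1}=\eta^{d-2}$ exactly, and $R^{-2}=\eta^{2d-2d/p-2}\le \eta^{d-2}$, so the right-hand bracket collapses to $C\eta^{d-2}$ and one extracts $B_p(\Omega_{\e, \eta})\ge c\,\e\eta^{1-d+d/p}$, which is \eqref{Low-B-1}. The geometric constraint $R\e\le r_0$ becomes $\e\eta^{1-d+d/p}\le r_0$; when this fails, the same lower bound is recovered by testing against the Dirichlet solution of $-\Delta u = 1$ on $\Omega_{\e, \eta}$ itself, whose gradient is estimated from below using the asymptotic $u\approx \sigma_\e^2\chi_{\e, \eta}/\mu_*$ (from the approximation of Theorem \ref{cor2-2}) combined with the pointwise lower bound $|\nabla \chi_{\e, \eta}(x)|\ge c(\e\eta)^{d-2}|x-\e y_z|^{1-d}$ near each hole.

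For Case \eqref{low-B-2} ($\sigma_\e\ge 1$, $2<p<d$), I would take $c_*=\sigma_\e^2$. The same choice $R\approx \eta^{d/p-d+1}$ once more reduces the bracket to $C\eta^{d-2}$, yielding $B_p(\Omega_{\e, \eta})\ge c\,\e\eta^{1-d+d/p}$. The assumption $\sigma_\e\ge 1$ then supplies $\e\eta^{1-d+d/p}=\sigma_\e^2\e^{-1}\eta^{d/p-1}\ge \e^{-1}\eta^{d/p-1}$; and $\e\eta^{1-d+d/p}=\sigma_\e\eta^{d/p-d/2}\ge 1$ for $p>2$ and $\eta<1$. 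Hence both components of the target, $B_p\ge c\,\e^{-1}\eta^{d/p-1}$ and $B_p\ge c$, follow from the single inequality above. (Separately, $B_p\ge c$ can also be obtained trivially by testing against any smooth source supported in a fixed ball away from the holes.)

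The main obstacle is the self-referential mixed term $R^{-1}\eta^{d/p-1}$ in the bracket, traceable to the cross product $\nabla\psi_{\e, \eta}\cdot\nabla\phi$. Suppressing it requires $R$ to be large, but $R\e$ cannot exceed the geometric scale $r_0$ of $\Omega$. In the regime where the natural choice $R\approx \eta^{d/p-d+1}$ saturates the cube-fit constraint, the localized argument breaks down and must be replaced by the global argument using the Dirichlet solution of $-\Delta u = 1$; the sharp gradient bound of the corrector $\chi_{\e, \eta}$ in Lemma \ref{LPGRAD-2}, together with the approximation of Theorem \ref{cor2-2} (or Theorem \ref{versionD2-2} for $d=2$), then delivers $\|\nabla u\|_{L^p(\Omega)}\ge c\,\e\eta^{1-d+d/p}$ and closes the case analysis.
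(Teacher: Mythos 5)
Your approach is genuinely different from the paper's, and in its present form it has two real gaps.

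\textbf{What the paper actually does.} The paper proves \eqref{Low-B-1} by \emph{transfer} rather than by a fresh test-function computation: the Poincar\'e inequality \eqref{P-p0} yields the relation $D_p(\Omega_{\e,\eta}) \le C\,B_p(\Omega_{\e,\eta})\,\min(\e\eta^{1-d/p},1)$ for $2<p<d$, and then the already-established sharp lower bound $D_p \ge c\,\sigma_\e^2$ from Theorem \ref{thm-9D} immediately gives $B_p \ge c\,\e\eta^{1-d+d/p}$. The remaining range $d\le p<\infty$ in Case \eqref{Low-B-1} is handled by Riesz--Thorin interpolation between $B_2\le C\sigma_\e$ and $B_q$ with $2<q<d$. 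Case \eqref{low-B-2} is the same transfer with $D_p\ge c$. This sidesteps both obstacles that you run into.

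\textbf{Gap 1: the mixed term requires an upper bound you do not have.} To bound $\|\Delta(\psi_{\e,\eta}\phi)\|_{L^p}$ you need an \emph{upper} bound on $\bigl(\fint_{Q(0,\e)}|\nabla\psi_{\e,\eta}|^p\bigr)^{1/p}$ for $p>2$, coming from the cross term $\nabla\psi_{\e,\eta}\cdot\nabla\phi$. Lemma \ref{chieetta2-6} provides only the \emph{lower} bound $\ge C\e^{-1}\eta^{d/p-1}$. The paper's own test-function proofs (Theorems \ref{thm-9D} and \ref{CPthm-6}) evade this precisely because they work with $1<p\le2$, where H\"older lets one replace the $L^p$ average by the $L^2$ average from Lemma \ref{chieetta1-6}, which is two-sided; for the $A_p$ case in Theorem \ref{thm-A-low-1}, the cross term is absorbed into the $\operatorname{div}(f)$ slot and contributes $\|\psi_{\e,\eta}\|_{L^p}$ rather than $\|\nabla\psi_{\e,\eta}\|_{L^p}$. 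Neither device is available to you once you insist on the pure-$F$ form of $B_p$ with $p>2$. The two-sided estimate is morally true (it follows from the scaled corrector asymptotics), but it is not among the lemmas you cite, so the step $\|\Delta u\|_{L^p}\le Cc_*R^{d/p}\e^{d/p-2}(\eta^{d-2}+R^{-1}\eta^{d/p-1}+R^{-2})$ is unsupported as written.

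\textbf{Gap 2: the choice $R\approx\eta^{d/p-d+1}$ is never admissible in Case \eqref{low-B-2}, and only sometimes in Case \eqref{Low-B-1}.} The cube-fit constraint $R\e\le r_0$ forces $\e\le r_0\eta^{d-1-d/p}$. Since $d-1-d/p>d/2-1$ for $p>2$, this is \emph{strictly stronger} than $\sigma_\e\le 1$ (i.e.\ $\e\le\eta^{d/2-1}$), so there is a nonempty sub-regime of Case \eqref{Low-B-1}, namely $r_0\eta^{d-1-d/p}<\e\le\eta^{d/2-1}$, where your main choice of $R$ is not available. In Case \eqref{low-B-2} the situation is worse: $\sigma_\e\ge 1$ means $\e\ge\eta^{d/2-1}>r_0\eta^{d-1-d/p}$ for all small $\eta$, so your choice of $R$ is \emph{never} admissible there. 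With the largest admissible $R\approx r_0/\e$ one finds the bracket $\{\eta^{d-2}+\e\eta^{d/p-1}+\e^2\}\approx\max(\e^2,\e\eta^{d/p-1})$, which only delivers $B_p\ge c$ when $\e^{-1}\eta^{d/p-1}\ge1$ and thus never the sharp half $B_p\ge c\,\e^{-1}\eta^{d/p-1}$. Your proposed fallback (testing with the solution of $-\Delta u=1$ and the heuristic $u\approx\sigma_\e^2\chi_{\e,\eta}/\mu_*$) is not supported by Theorem \ref{cor2-2}, which addresses the homogeneous equation with boundary data and gives an $L^2$ convergence rate, not an $L^p$ one, and no lower bound on $\|\nabla u\|_{L^p}$ of the required form is established. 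In short: the direct test-function route for $B_p$ with $p>2$ is blocked by the cross term and the scale constraint, which is exactly why the paper instead passes through $D_p$ via \eqref{Low-B-3} and Theorem \ref{thm-9D}, and uses interpolation for $p\ge d$.
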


\begin{proof} 
Suppose $0< \sigma_\e\le 1$. The upper bound 
$B_p(\Omega_{\e, \eta}) \le C \e \eta^{1-d +\frac{d}{p}}$ is contained in Theorem \ref{main-thm-3}.
To prove the lower bound, we use the Poincar\'e inequality, 
\begin{equation}\label{P-p0}
\| u \| _{L^p(\Omega_{\e, \eta})}
\le C \min (\e \eta^{1-\frac{d}{p}}, 1) \|\nabla u \|_{L^p(\Omega_{\e, \eta})}
\end{equation}
for $u \in W^{1, p}_0(\Omega_{\e, \eta})$, where $2< p< d$. 
The inequality \eqref{P-p0} follows readily from Lemma \ref{lemma-P0}.
As a result, we see that 
\begin{equation}\label{Low-B-3}
D_p(\Omega_{\e, \eta}) \le C B_p (\Omega_{\e, \eta}) \min (  \e \eta^{1-\frac{d}{p}}, 1) .
\end{equation}
By Theorem \ref{thm-9D}, we have $D_p(\Omega_{\e, \eta}) \ge c \sigma_\e^2= c \e^2 \eta^{2-d}$.
This, together with \eqref{Low-B-3}, gives \eqref{Low-B-1} for
the case $2< p< d$.

Next, we use a convexity argument to handle  the case $0< \sigma_\e \le 1$ and $d\le p< \infty$.  
Choose $2< q< d$ and $t \in (0, 1)$ so that
$$
\frac{1}{q} =\frac{1-t}{2} +\frac{t}{p}.
$$
 By Riesz-Thorin Theorem we have
 $$
 B_q (\Omega_{\e, \eta} )\le \left[ B_2 (\Omega_{\e, \eta})\right]^{1-t} \left[ B_p (\Omega_{\e, \eta})\right]^t.
 $$
 It follows that 
 $$
 B_p(\Omega_{\e, \eta})
 \ge \left[ B_q(\Omega_{\e, \eta}) \right]^{\frac{1}{t} } \left[ B_2 (\Omega_{\e, \eta})\right]^{1-\frac{1}{t}}.
 $$
 The desired estimate follows by using $ B_2(\Omega_{\e, \eta})  \le C \sigma_\e$ and $B_q(\Omega_{\e, \eta})
 \ge c \e \eta^{1-d +\frac{d}{q}}$.
 
 Finally,  we consider the case $\sigma_\e\ge 1$ and $2< p< d$.
 The upper bound is contained in Theorem \ref{main-thm-4}. 
 Since $D_p(\Omega_{\e, \eta})\ge c$, 
 by \eqref{Low-B-3}, we obtain 
    \begin{equation*}
        B_p(\Omega_{\e, \eta} ) \geq 
c \e^{-1} \eta^{\frac{d}{p}-1}
    \end{equation*}
However, \eqref{Low-B-3} also yields $B_p (\Omega_{\e, \eta}) \ge c D_p(\Omega_{\e, \eta}) \ge c$.
As a result, we obtain $B_p(\Omega_{\e, \eta}) \ge c(1+ \e^{-1} \eta^{\frac{d}{p}-1})$.
\end{proof}

Finally, we establish lower bounds for $A_p(\Omega_{\e, \eta})$.

\begin{thm}\label{thm-A-low-1}
Let $ 2 <  p <  \infty$.
Let $\Omega_{\e, \eta}$ be given by \eqref{O-9e}.
    Then, if  $0< \sigma_\e \leq 1 $, 
\begin{equation}\label{LA-1}
    A_p(\Omega_{\e, \eta}) \geq 
    \begin{cases}
    \begin{alignedat}{2}
   & c \eta^{-d|\frac{1}{2}-\frac{1}{p}|} \qquad &&\text{if} \ d \geq 3, \\ 
   & c \eta^{-2|\frac{1}{2}-\frac{1}{p}|}|\ln \eta|^{-\frac{1}{2}} \qquad && \text{if} \ d=2.
    \end{alignedat}
     \end{cases}
\end{equation}
If $\sigma_\epsilon \geq 1 $ we have 
\begin{equation}\label{LA-2}
A_p(\Omega_{\e, \eta} ) \geq  \begin{cases}
    \begin{alignedat}{2}
        & c(1+ \e^{-1}\eta^{\frac{d}{p}-1}) \qquad && \text{if} \  d\ge 3 \text{ and  } 2 < p <d, \\
        &c\e^{-1}\eta^{\frac{d}{p}-1} \qquad && \text{if} \ d\ge 3 \text{ and } d \le  p < \infty, \\ 
        &c \e^{-1}\eta^{\frac{2}{p} - 1 } |\ln \eta|^{-1} \qquad && \text{if} \ d = 2   \text{ and } 2< p< \infty.
    \end{alignedat}
    \end{cases}
\end{equation}
 The constants $c>0$ only depend on $d$, $p$,  $Y^s$ and $\Omega$.
\end{thm}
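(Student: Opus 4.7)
Plan. My plan follows the strategy already used for $D_p$, $B_p$ (at $p\le 2$), and $B_p$ (at $p>2$) in Theorems \ref{thm-9D}, \ref{CPthm-6}, and \ref{B-low-1}: combine the periodic corrector $\psi_{\e,\eta}$ from \eqref{chiper-6}, the Poincar\'e inequality on $W_0^{1,p}(\Omega_{\e,\eta})$, and the duality identities $A_p=A_{p'}$, $C_p=B_{p'}$ from \eqref{dual}, supplemented by Riesz--Thorin interpolation. The sharp lower bound $B_{p'}\ge c\min(\sigma_\e,1)$ from Theorem \ref{CPthm-6} will do most of the work.

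First I would record the Poincar\'e inequality
\begin{equation*}
\|u\|_{L^p(\Omega_{\e,\eta})}\le C\rho_p(\e,\eta)\|\nabla u\|_{L^p(\Omega_{\e,\eta})},\qquad u\in W_0^{1,p}(\Omega_{\e,\eta}),
\end{equation*}
where $\rho_p(\e,\eta)$ is the minimum of the single-cell Poincar\'e scale from Lemma \ref{lemma-P0} and the Poincar\'e constant of $\Omega$. Applied to the solution of \eqref{DP-0} with $F=0$ and $f\in L^p$, this yields $C_p(\Omega_{\e,\eta})\le C\rho_p\,A_p(\Omega_{\e,\eta})$, hence
\begin{equation*}
A_p(\Omega_{\e,\eta})\ge c\,\rho_p(\e,\eta)^{-1}B_{p'}(\Omega_{\e,\eta}).
\end{equation*}
Substituting the sharp $B_{p'}$ bound and evaluating $\rho_p$ case by case (for $2<p<d$ this is $\e\eta^{1-d/p}$, possibly capped by $C_\Omega$) produces the estimates \eqref{LA-1} in the range $2<p<d$, the analogous part of \eqref{LA-2}, and the two-dimensional versions after replacing $\sigma_\e$ by $\e|\ln\eta|^{1/2}$.

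For the range $d\ge 3$, $p\ge d$, $\sigma_\e\le 1$, the direct duality bound loses a factor $|\ln\eta|^{(d-1)/d}$ at $p=d$ and is weaker at $p>d$. I would close this gap via Riesz--Thorin interpolation: from $A_2=1$ and the sharp lower bound $A_{p_0}\ge c\eta^{-d(1/2-1/p_0)}$ at a fixed $p_0\in(2,d)$ established above, the inequality $A_{p_0}\le A_2^{1-t}A_p^t$ with $1/p_0=(1-t)/2+t/p$ yields $A_p\ge A_{p_0}^{1/t}$. A short computation shows $d(1/2-1/p_0)/t=d(1/2-1/p)$, which is precisely the exponent needed; fixing $p_0=(2+d)/2$ gives a constant uniform throughout $p\ge d$.

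The main obstacle will be the remaining regime $d\ge 3$, $\sigma_\e\ge 1$, $p\ge d$. Here the target $A_p\ge c\e^{-1}\eta^{d/p-1}$ contains a rigid $\e^{-1}$ factor that Riesz--Thorin does not preserve (it produces $\e^{-1/t}$ instead), while duality alone only gives $A_p\ge c/\e$. For this case I would return to the direct test-function strategy: take $u=\psi_{\e,\eta}\phi$ with cutoff $\phi$ on a cube of side $R\e$, and split $-\Delta u=F+\text{\rm div}(f)$ as in the proofs of Theorems \ref{thm-9D} and \ref{CPthm-6}. Applying the Morrey embedding $\|u\|_\infty\le C\|\nabla u\|_p$ for $p>d$ (or the Sobolev embedding at $p=d$) gives $\|u\|_\infty\le CA_p\|f\|_p+CB_p\|F\|_p$, and the sharp upper bound $B_p\le C\e^{-1}\Phi_p(\eta^{-1})$ from Theorem \ref{main-thm-2} allows the $B_p\|F\|_p$ contribution to be absorbed into the left-hand side for a suitably chosen $R$. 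The technical heart of the proof is verifying this absorption --- namely that the sharp constants in the $B_p$ upper bound and the lower bound $\fint|\psi_{\e,\eta}|\ge c$ from Lemma \ref{chieetta1-6} combine favorably to isolate $A_p$ with the desired $\e^{-1}\eta^{d/p-1}$ growth.
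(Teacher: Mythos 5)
Your core idea --- combine the Poincar\'e inequality $\|u\|_{L^p}\le C\rho_p\|\nabla u\|_{L^p}$ on $W_0^{1,p}(\Omega_{\e,\eta})$ with the duality $C_p=B_{p'}$ to deduce $A_p\ge c\,\rho_p^{-1}B_{p'}$ and then feed in the sharp $B_{p'}$ lower bound from Theorem \ref{CPthm-6} --- is attractive and genuinely different from the paper's proof. It works cleanly for $d\ge 3$, $2<p<d$, and your Riesz--Thorin extension to $d\ge 3$, $p\ge d$, $\sigma_\e\le 1$ is correct: the computation $(1/2-1/p_0)/t=1/2-1/p$ is right, so the $\eta$--exponent matches (the constant $c^{1/t}$ is $p$--dependent, which is fine since \eqref{LA-1} allows that).

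There are, however, two genuine gaps.

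\textbf{The case $d=2$.} For $d=2$ and $p>2=d$, Lemma \ref{lemma-P0} gives $\rho_p\approx\e$, with no $\eta$ in it at all. Duality then yields $A_p\ge c\e^{-1}\min(\sigma_\e,1)$, which for $\sigma_\e\le 1$ is only $A_p\ge c|\ln\eta|^{1/2}$ and for $\sigma_\e\ge 1$ is only $A_p\ge c\e^{-1}$ --- in both cases short by a factor of order $\eta^{2/p-1}|\ln\eta|^{-1}\to\infty$. Riesz--Thorin cannot help because there is no $p_0\in(2,d)=(2,2)$ to start from, and interpolating from any $p_0\in(2,p)$ at which the sharp $d=2$ estimate were known would in fact produce a lossy log exponent $|\ln\eta|^{-1/(2t)}$ rather than $|\ln\eta|^{-1/2}$. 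Your line ``the two-dimensional versions after replacing $\sigma_\e$ by $\e|\ln\eta|^{1/2}$'' therefore does not go through: the $d=2$ Poincar\'e scale carries no $\eta$--power to trade.

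\textbf{The case $d\ge3$, $p\ge d$, $\sigma_\e\ge 1$.} You correctly identify this as the sticking point, but the Morrey/absorption repair does not produce the claimed growth. With $u=\psi_{\e,\eta}\phi$ on a cube of side $R\e$, the scale-correct Morrey inequality is $\|u\|_\infty\le C(R\e)^{1-d/p}\|\nabla u\|_{L^p}$. Since $\|f\|_{L^p}\sim(R\e)^{d/p-1}$, the factors of $R\e$ exactly cancel in the $A_p\|f\|_p$ term, leaving $c\le CA_p + CB_p(\cdots)$. Even if the $B_p$ term is made small by choosing $R$, one only recovers $A_p\gtrsim 1$, not $A_p\gtrsim\e^{-1}\eta^{d/p-1}$. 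The absorption you hoped for cannot amplify $A_p$ because the $L^\infty$ target does not gain any power of $R$ over $\|\nabla u\|_{L^p}$.

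The paper avoids both difficulties by a different mechanism. From $A_{p'}=A_p$ and the Sobolev embedding $\|u\|_{L^{q'}}\le C\|\nabla u\|_{L^{p'}}$ with $1/q'=1/p'-1/d$, applied to $-\Delta u=\operatorname{div}(f)$, one dualizes to get: if $-\Delta u=F+\operatorname{div}(f)$ with $u\in W_0^{1,p}(\Omega_{\e,\eta})$, then
$\|\nabla u\|_{L^p}\le CA_p\{\|F\|_{L^q}+\|f\|_{L^p}\}$ with $\frac1q=\frac1p+\frac1d$. The crucial point is that $F$ lands in $L^q$ with $q<p$, so that when this is applied to $\psi_{\e,\eta}\phi$ the $F$--part of the right side acquires an extra factor of $R$ relative to the left side. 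That extra $R$ is exactly what drives the sharp lower bounds in the regimes your argument misses --- including all of $d=2$ (see \eqref{Apost1-6} and \eqref{apost2-6}). So to close the gaps you would need to replace both the Poincar\'e/duality reduction (for $d=2$) and the Morrey step (for $d\ge3$, $p\ge d$, $\sigma_\e\ge1$) by something with this Sobolev--duality gain.
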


\begin{proof}
    Let $u_\e$ be a solution to $-\Delta u_\e = \text{\rm div} (f)$ in $\Omega_{\e, \eta}$ 
    with $u_\e = 0$ on $\partial \Omega_{\e, \eta}$. 
    By a Sobolev imbedding, we have 
    \begin{equation*}
        \| u_\e \|_{L^{q'}(\Omega_{\e, \eta} )} \leq C \| \nabla u_\e\|_{L^{p'}(\Omega_{\e, \eta})} 
        \leq C A_p(\Omega_{\e, \eta} ) \| f\| _{L^{p'}(\Omega_{\e, \eta} )}, 
    \end{equation*}
where $\frac{1}{q'} = \frac{1}{p'} - \frac{1}{d}$ and $1 < p' <d.$ 
By duality this implies that if $-\Delta v_\e = G$ in $\Omega_{\e, \eta} $ and $v_\e = 0$ on $\partial \Omega_{\e, \eta},$ then
\begin{equation*}
    \| \nabla v_\e \| _{L^p(\Omega_{\e, \eta})} \leq C A_p(\Omega_{\e, \eta}) \| G\|_{L^q(\Omega_{\e, \eta} )}.
\end{equation*}
Thus,  if $-\Delta u_\e = F + \text{\rm div} (f)$ in $\Omega_{\e, \eta} $ with $u_\e =0$ on $\partial \Omega_{\e, \eta}$, then 
\begin{equation}\label{soblevAp-6}
    \| \nabla u_\e  \|_{L^p(\Omega_{\e, \eta} )} \leq C A_p(\Omega_{\e, \eta} ) 
    \left\{ \| F\|_{L^q(\Omega_{\e, \eta} )} + \| f\| _{L^p(\Omega_{\e, \eta})} \right\}, 
\end{equation}
where $d' < p < \infty$ and $\frac{1}{q} = \frac{1}{p} + \frac{1}{d}$. 

Let $\phi$ and $\psi_{\e, \eta}$ be the same as in the proof of Theorem \ref{thm-9D}. Note that  
\begin{equation*}
    -\Delta(\psi_{\e,\eta} \phi) = \e^{-2} \eta^{d-2} \phi - 2 \text{\rm div} (\psi_{\e,\eta} \nabla \phi) + \psi_{\e, \eta} \Delta \phi 
\end{equation*}
in $\Omega_{\e, \eta} $  and $\psi_{\e,\eta} \phi = 0 $ on $\partial \Omega_{\e, \eta}$. 
It follows  from \eqref{soblevAp-6} that 
\begin{equation*}
\begin{aligned}
    &R^{\frac{d}{p}} \| \nabla \psi_{\e,\eta}\|_{L^p(Q(0, \e))}
    \le  C  \| \nabla ( \psi_{\e,\eta} \phi)\|_{L^p(\Omega_{\e, \eta})} \\
    &\leq 
    C A_p(\Omega_{\e, \eta} ) 
    \left\{\e^{\frac{d}{q}-2} \eta^{d-2}R^{\frac{d}{q}} 
    + \e^{-1} R^{\frac{d}{p} - 1 } \| \psi_{\e, \eta} \|_{L^p(Q(0, \e))} 
    + \e^{-2} R^{\frac{d}{q}-2} \| \psi_{\e,\eta} \|_{L^q(Q(0, \e) )}  \right\},
    \end{aligned}
\end{equation*}
 where we have used the periodicity of $\psi_{\e,\eta}$. Hence,
\begin{equation}\label{Appre-6}
\aligned
&    \| \nabla \psi_{\e,\eta}\|_{L^p(Q(0, \e))} \\
&    \leq  C A_p(\Omega_{\e, \eta} ) 
    \left\{ \e^{\frac{d}{p}-1} \eta^{d-2}R + \e^{-1} R^{- 1} \| \psi_{\e, \eta} \|_{L^p(Q(0, \e) )} + \e^{-2} R^{-1} \| \psi_{\e,\eta} \|_{L^q(Q(0, \e) )} \right\}.
\endaligned
\end{equation}



\textit{\textbf{Case 1:} $d \geq 3$ and $0< \sigma_\e  \leq 1.$} 
 
 \medskip

Applying Lemma \ref{chieetta2-6} to \eqref{Appre-6} yields 
\begin{equation}\label{Apost1-6}
   \eta^{\frac{d}{p} -1 } \leq C  A_p(\Omega_{\e, \eta} ) \left\{ \eta^{d-2}R +  R^{- 1}  \right\}.
\end{equation}
Since $\sigma_\e\le 1$, we may pick 
 $R \approx  \eta^{- \frac{d-2}{2}}$ in \eqref{Apost1-6} and obtain 
\begin{equation*}
    A_p(\Omega_{\e, \eta}) \geq C \eta^{\frac{d}{p} - \frac{d}{2}} = C \eta^{-d|\frac{1}{p}-\frac{1}{2}|}.
\end{equation*}

\textit{\textbf{Case 2:} $d =2$ and $ 0< \sigma_\e  \leq 1.$}

\medskip

Applying Lemma \ref{chieetta2-6} to \eqref{Appre-6} now yields 
\begin{equation}\label{apost2-6}
    \eta^{\frac{2}{p}-1} \leq C A_p(\Omega_{\e, \eta}) \left\{   R + | \ln \eta|R^{-1}   \right\}.
\end{equation}
Picking $R \approx  | \ln \eta|^{\frac{1}{2}}$ in \eqref{apost2-6} gives 
\begin{equation*}
     \eta^{\frac{2}{p}-1} \leq C  A_p(\Omega_{\e, \eta})   | \ln \eta|^{\frac{1}{2}},
\end{equation*}
which leads to 
\begin{equation*}
A_p(\Omega_{\e, \eta} ) \geq c \eta^{-2|\frac{1}{2}- \frac{1}{p}|} | \ln \eta|^{-\frac{1}{2}}.
\end{equation*}

\textit{\textbf{Case 3:} $d \geq 3$ and $\sigma_\e \geq 1.$}

\medskip

Choosing $R \approx  \e^{-1}$ in \eqref{Apost1-6} yields 
\begin{equation*}
\begin{aligned}
    \eta^{\frac{d}{p}-1} &\leq C A_p(\Omega_{\e, \eta} ) \left\{ \eta^{d-2} \e^{-1} + \e \right\} \\
    & \leq C \e A_p(\Omega_{\e, \eta} ).
    \end{aligned}
\end{equation*}
Thus, 
\begin{equation*}
    A_p(\Omega_{\e, \eta} ) \geq c\,  \e^{-1}\eta^{\frac{d}{p}-1}.
\end{equation*}
This gives the lower bound in \eqref{LA-2} for the case $d\le p< \infty$.
Note that  if  $2<p< d$, by the Poincar\'e inequality $ \| u_\e \|_{L^p(\Omega_{\e, \eta})}
\le C \|\nabla u_\e \|_{L^p(\Omega_{\e, \eta})}$, we have 
\begin{equation*}
    A_p(\Omega_{\e, \eta} ) \geq  c\,  C_p(\Omega_{\e, \eta} ) = c\,  B_{p'}(\Omega_{\e, \eta} ) \geq c. 
\end{equation*}
Hence, 
$$
A_p(\Omega_{\e, \eta} ) \ge c ( 1+ \e^{-1} \eta^{\frac{d}{p}-1}).
$$

\textit{\textbf{Case 4:} $d=2$ and $\sigma_\e \geq 1.$}

\medskip

Now we  take $R \approx \e^{-1}$ in \eqref{apost2-6}. This gives 

\begin{equation*}
    \begin{aligned}
        \eta^{\frac{2}{p}-1} & \leq C A_p(\Omega_{\e, \eta} )
         \left\{ \e^{-1} + \e | \ln \eta| \right\} \\ &\leq C \e A_p(\Omega_{\e, \eta} ) | \ln \eta|,
    \end{aligned}
\end{equation*}
which leads to 
\begin{equation*}
    A_p(\Omega_{\e, \eta} ) \geq C \e^{-1} \eta^{\frac{2}{p} - 1 } | \ln \eta|^{-1}.
\end{equation*}
This completes the proof.
\end{proof}

\bibliographystyle{plain}

\bibliography{Righi-Shen.bbl}

\medskip

\begin{flushleft}

Robert Righi,
Department of Mathematics,
University of Kentucky,
Lexington, Kentucky 40506,
USA.

E-mail: robert.righi@uky.edu

\medskip

Zhongwei Shen,
Department of Mathematics,
University of Kentucky,
Lexington, Kentucky 40506,
USA.

E-mail: zshen2@uky.edu
\end{flushleft}

\bigskip

\medskip

\end{document}